\documentclass{amsart}
\usepackage{mathpreamble}

\newcommand{\s}{\widetilde{s}}
\newcommand{\g}{\widetilde{g}}
\newcommand{\M}{\widetilde{M}}

\begin{document}

\title{Holomorphic functions on complete Hermitian manifolds with flat Chern connection}

\author{Zeqing Miao}
\address{School of Mathematical Sciences, Xiamen University, Xiamen, Fujian, 361005, China.}
\email{{19020240157493@stu.xmu.edu.cn}}
\author{Hanyu Wu}
\address{School of Mathematical Sciences, Xiamen University, Xiamen, Fujian, 361005, China.}
\email{{19020230157192@stu.xmu.edu.cn}}
\author{Bo Yang}
\thanks{The third-named author is partially supported by the National Natural Science Foundation of China under grant numbers 11801475, 12141101, and 12271451, and by the Natural Science Foundation of Fujian Province of China under grant number 2019J05012.}
\address{School of Mathematical Sciences, Xiamen University, Xiamen, Fujian, 361005, China.}
\email{{boyang@xmu.edu.cn}}

\date{08/22/2026}

\begin{abstract}
A classical result of Boothby states that any compact Hermitian manifold with flat Chern connection is covered by a complex Lie group. In this work, we prove a sharp generalization: the universal cover of any complete Hermitian manifold with vanishing Chern curvature and torsion of sublinear growth is holomorphically isometric to a complex Lie group equipped with a left-invariant metric. The proof relies on a new gradient estimate for holomorphic functions on complete Hermitian manifolds with nonnegative second Ricci curvature. Combining this estimate with methods from sub-Riemannian geometry, we establish quantitative characterizations of function theory on these manifolds.

\end{abstract}

\subjclass[2020]{32Q30, 32Q57, 53C55}

\maketitle

\markleft{On complete Hermitian manifolds with flat Chern connection}
\markright{On complete Hermitian manifolds with flat Chern connection}

\setcounter{tocdepth}{1}
\tableofcontents

\begin{comment}
{
\color{blue}

To do list:  Check Section 3, 4, 5. notations, accuracies, and typos.

}    
\end{comment}

\section{Introduction}

\subsection{Statement of the main results} Recall that a complex manifold $M$ is \emph{complex parallelizable} if there exists a global holomorphic tangent frame on $T^{1,0} M$. Obviously, a complex parallelizable manifold admits a Hermitian metric with zero Chern curvature. Wang \cite{Wang1954} proved that a compact complex parallelizable manifold must be the quotient space of a complex Lie group $G$ by some discrete subgroup $\Gamma$. Boothby \cite{Boothby1958} further showed that any Hermitian manifold with zero Chern curvature is covered by a complex parallelizable manifold. Moreover, if such a manifold is compact, it is holomorphically isometrically covered by a complex Lie group with a left-invariant metric.

In this paper, we consider a noncompact Hermitian manifold with zero Chern curvature which is \emph{complete with respect to the Riemannian connection}. For brevity, we call such a manifold a \emph{complete noncompact Chern-flat} Hermitian manifold. For example, every complex Lie group equipped with a left-invariant Hermitian metric is a complete Chern-flat Hermitian manifold; for completeness of left-invariant metrics, see \cite[p.~294]{Milnor1976}.

In view of Boothby's results \cite{Boothby1958}, it is natural to ask when a noncompact complete Chern-flat manifold is covered by a complex Lie group. We note that $\mathbb{C}^2$ admits complete Chern-flat Hermitian metrics that are not holomorphically isometric to any complex Lie group equipped with a left-invariant Hermitian metric. See \cite[p.~5767]{WYZ2020} for such an example, and Propositions \ref{2dim_more} and \ref{Heisen_non_inv} of the present paper for further generalizations.
 In view of these results, we propose the following question. 

\begin{question}\label{ques1_intro}
Study the uniformization of complete noncompact Chern-flat Hermitian manifolds. In particular, determine when such a manifold is covered by a complex Lie group with a left-invariant metric.
\end{question}

Let $\mathcal{O}(M)$ be the ring of holomorphic functions on a complex manifold $M$. Consider a complete Hermitian manifold $(M, g)$. Let $p$ be a fixed point and $d(p, \cdot)$ the distance function with respect to $g$. Given a real number $\alpha \geq 0$, we say that $f \in \mathcal{O}(M)$ has \emph{polynomial growth of order at most $\alpha$} if there exists some constant $C(\alpha, f)$ so that
\begin{equation}\label{porder_atmost}
|f(q)| \leq C(d(q, p)+1)^{\alpha}, \ \ \ \ \forall q \in M.
\end{equation}
Similarly, $f$ has \emph{Hadamard order $\leq \beta$} for some $\beta>0$ if there exists some $C(\beta, f)>0$ such that 
\begin{equation}\label{Horder_atmost}
\ln |f(q)| \leq C(d(q, p)+1)^{\beta}, \ \ \ \ \forall q \in M.    
\end{equation}
For any real number $d \geq 0$, let $\mathcal{O}_d(M, g)$ denote the complex vector space of holomorphic functions with polynomial growth of order at most $d$. We are interested in the characterization of $\mathcal{O}(M)$ and $\mathcal{O}_d(M,g)$ on complete Chern-flat manifolds, with particular emphasis on their connection to Question \ref{ques1_intro}. Indeed, our first result addresses the second part of Question \ref{ques1_intro}.

\begin{theorem}\label{subline_intro}
    Let $(M,g)$ be a complete Chern-flat Hermitian manifold. Fix a point $p \in M$ and assume that the Chern torsion satisfies $|T|(q) \le C(1+d(q, p))^{\delta}$ for constants $0 \leq \delta<1$ and $C>0$. Then the universal cover of $(M,g)$ is holomorphically isometric to a complex Lie group with a left-invariant metric.
\end{theorem}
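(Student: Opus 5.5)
Pass to the universal cover $(\M,\g)$ with the pulled-back metric. It is still complete and Chern-flat, and it satisfies the same torsion bound, because $d_{\g}(\tilde q,\tilde p)\ge d_g(q,p)$. By Boothby, $\M$ is complex parallelizable. More precisely, Chern-flatness together with simple connectivity gives a global Chern-parallel unitary frame $e_1,\dots,e_n$ of $T^{1,0}\M$. Since the Chern connection is compatible with $\bar\partial$, parallel sections are holomorphic, so the $e_i$ are holomorphic vector fields with $|e_i|\equiv 1$. The torsion then reads $T(e_i,e_j)=-[e_i,e_j]=-\sum_k c_{ij}^k e_k$, where the $c_{ij}^k$ are holomorphic functions. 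The goal is to show that all $c_{ij}^k$ are constant. Once this holds, the $e_i$ span a finite-dimensional complex Lie algebra $\mathfrak g$ of holomorphic vector fields. These fields are complete because they have unit length on a complete manifold, so their real parts generate flows defined for all time. Standard Lie theory (Palais' integrability theorem) then makes $\M$ a homogeneous space of the simply connected complex group $G$ with Lie algebra $\mathfrak g$. The action is locally free and transitive, since the $e_i$ form a frame, and $\M$ is simply connected, so $\M\cong G$. Under this identification the $e_i$ become left-invariant fields that are unitary for $\g$, so $\g$ is left-invariant.

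The key analytic input is the sublinear growth of the $c_{ij}^k$. Because the frame is unitary, $|c_{ij}^k|\le |T|\le C(1+d)^{\delta}$ with $\delta<1$. I would then invoke the gradient estimate for holomorphic functions on complete Hermitian manifolds with nonnegative second Ricci curvature, announced in the abstract. In the Chern-flat case every Chern Ricci curvature vanishes, so this hypothesis holds. A Cheng–Yau/Liouville-type consequence should be that a holomorphic function with growth $o(d)$ is constant. Apply this to each $c_{ij}^k$ on $\M$; they are constant, and the argument above finishes the proof.

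The main obstacle is this Liouville step. Since the metric is non-Kähler, the Laplacian relevant for holomorphic functions (the Chern Laplacian $g^{i\bar j}\partial_i\partial_{\bar j}$) differs from the Riemannian one by torsion terms. The cutoff function $d(p,\cdot)$ therefore interacts with the torsion, and one would need $|T|$ controlled on balls to close a Bochner argument. I would try a direct argument adapted to the frame. Write $e_i(f)=f_i$; differentiating along $e_j$ gives $e_j f_i - e_i f_j = \sum_k c_{ji}^k f_k$, and holomorphy gives $\bar e_j f_i=0$. The Bochner formula for $\sum_i|f_i|^2$ then acquires terms of the form $c\cdot f\cdot \nabla f$, bounded by $|T|\,|\nabla f|^2$. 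On a ball of radius $R$ these terms cost a factor $R^{\delta}$, whereas the cutoff contributes $R^{-1}$. A localized maximum-principle argument would give
\[
\sup_{B_{R/2}}|\partial f|\lesssim \frac{\operatorname{osc}_{B_R} f}{R}\bigl(1+R^{\delta}\bigr)^{a}
\]
for some exponent $a$. I would apply this with $f=c_{ij}^k$, which has oscillation $O(R^{\delta})$. The resulting bound is $R^{\delta-1+a\delta}$, and this tends to $0$ only if $a$ is small. Making the exponent bookkeeping work for all $\delta<1$ is delicate. If it fails, the fallback is sub-Riemannian comparison: compare $d$ with word length in the frame flows. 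This should yield polynomial-volume and distance estimates that bound $\dim\mathcal O_d$ and force sublinear holomorphic functions to be constant.
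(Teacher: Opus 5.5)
Your reduction matches the paper's. On the universal cover, Boothby's parallel holomorphic unitary frame turns the torsion into holomorphic structure functions $c_{ij}^k=-2T_{ij}^k$ of growth $O((1+d)^{\delta})$, and everything hinges on proving they are constant. The Palais-type integration afterwards is a fine substitute for Remark~\ref{Cartan_add}, up to composing with inversion if the frame comes out right-invariant.

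The gap is exactly that constancy step. You assert, without derivation, that a holomorphic function of growth $o(d)$ is constant, and this does not follow from one use of the gradient estimate. Theorem~\ref{grad} with $\widetilde{\delta}=\delta$ only gives $|\nabla c_{ij}^k|^2\le C(1+d)^{\lambda}$ for $\lambda>3\delta-1$. This bound decays only when $\delta<1/3$, and only then does it force $\nabla c_{ij}^k\equiv0$ via $\Box|\nabla f|^2\ge0$ and the maximum principle, as in Corollary~\ref{Liouville}. Your localized Bochner bound would need $a<(1-\delta)/\delta$, which degenerates as $\delta\to1$, and you leave this open. The sub-Riemannian fallback is circular: ball-box and Pansu-type comparisons presuppose a nilpotent group structure, which exists only once the $c_{ij}^k$ are known to be constant.

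The missing idea is iteration, and this is where the parallel holomorphic frame is used beyond $Ric^{(2)}=0$.
\begin{itemize}
\item Each $e_l$ is a holomorphic vector field, so $e_l(f)$ is again holomorphic, and $|\nabla f|^2=\sum_l|e_l(f)|^2$ in the unitary frame. Hence Theorem~\ref{grad} says each frame derivative lowers the growth exponent by any fixed amount less than $(1-\delta)/2$; the paper uses $(1-\delta)/3$.
\item After finitely many steps, every iterated derivative $e_{l_{p+1}}\cdots e_{l_1}(T_{ij}^k)$ has negative growth exponent. It therefore tends to zero at infinity and vanishes by the maximum principle.
\item You then climb back down. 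If $e_l(F)$ is constant for every $l$, then $|\nabla\operatorname{Re}F|$ is constant. A function whose gradient has nonzero constant norm grows exactly linearly: follow the unit gradient flow, which exists for all time by completeness, and use $d(\gamma(t),p)\le t$. This contradicts the sublinear growth of $F$, so $e_l(F)\equiv0$.
\item Downward induction yields $e_l(T_{ij}^k)\equiv0$ for all $l$.
\end{itemize}
With this inserted, the rest of your argument goes through.
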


Theorem \ref{subline_intro}, generalizing Boothby's result \cite{Boothby1958}, establishes a sharp gap phenomenon for the Chern torsion tensor on a complete Chern-flat manifold: its norm is forced to be constant once it has sublinear growth. Indeed, the same conclusion might fail if the torsion norm has linear growth. Given any holomorphic function $\rho(z)$ on $\mathbb{C}$, let $(z_1, z_2)$ be the standard coordinate of ${\mathbb C}^2$. We consider a Hermitian metric $g$ on ${\mathbb C}^2$
\begin{equation}
   \omega= \sqrt{-1} ( \varphi^1\wedge \overline{\varphi^1} + \varphi^2 \wedge \overline{\varphi^2} ) 
   \label{C2Herm_intro}
\end{equation}
where $\varphi^1=dz_1$, $\varphi^2=e^{\rho(z_1)} dz_2$. In Proposition \ref{2dim_more}, we show that (\ref{C2Herm_intro}) defines a complete Chern-flat Hermitian metric. Its Chern torsion components under the corresponding $(1, 0)$ frame are $T^1_{12}=0$, $T^2_{12}=\frac{1}{2} \rho'(z_1)$. Thus the torsion norm $|T|^2=\frac{1}{4}|\rho'(z_1)|^2$. We observe that $|T|$ has exactly linear growth if $\rho$ is a quadratic polynomial of $z_1$. In this case, \eqref{C2Herm_intro} cannot be a left-invariant metric on a complex Lie group.

A crucial tool in the proof of Theorem \ref{subline_intro} is the following gradient estimate for holomorphic functions on a complete Hermitian manifold with nonnegative second Ricci curvature.

\begin{proposition}\label{grad_intro}
Let $(M^n,g)$ be a complete Hermitian manifold with nonnegative second Ricci curvature.  Fix a point $p \in M$. Assume that there exist constants $\delta \geq 0$ and $C_1>0$ such that the torsion satisfies $|T|(q) \le C_1(1+d(q, p))^{\delta}$ on $M$. In a local holomorphic coordinate system $\{z^i\}$, we define the gradient norm of $f \in \mathcal{O}(M)$ by
\(
|\nabla f|^2   \coloneqq g^{i\overline{j}}
\frac{\partial f}{\partial z^i}
\overline{\frac{\partial f}{\partial z^j}}
\). Given any $f \in \mathcal{O}_{\widetilde{\delta}}(M, g)$ with $\widetilde{\delta} \geq 0$ and any $\lambda>2\widetilde{\delta}-1+\delta$, there exists a constant $C_2>0$, depending on $f$ and the geometry of $M$, such that
\begin{equation}
    |\nabla f|^2(q) \le C_2\bigl(1+d(q,p)\bigr)^{\lambda},
    \qquad q\in M.
    \label{grade_intro}
\end{equation}
\end{proposition}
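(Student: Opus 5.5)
We will argue by a Bochner formula combined with a cutoff (Cheng--Yau type) maximum principle argument, taking the Chern connection as the background connection and the Riemannian (Levi-Civita) distance for the cutoff. For $f$ holomorphic, set $u=|\nabla f|^2=g^{i\bar j}f_i\overline{f_j}$. We will compute $\Delta^{C} u$, where $\Delta^C=g^{i\bar j}\partial_i\partial_{\bar j}$ is the complex (Chern) Laplacian acting on functions. Using Chern covariant derivatives, $f_{i\bar j}=0$, and commuting derivatives produces a curvature term $R_{i\bar j k\bar l}g^{i\bar j}f_l\overline{f_k}$-type contraction. This term is exactly the second Ricci curvature evaluated on $\nabla f$, so the hypothesis gives
\[
\Delta^C u \ge |\nabla\nabla f|^2 \ge 0 .
\]
Here $\nabla\nabla f$ denotes the $(2,0)$ Chern Hessian, and no torsion appears at this stage because the $(1,0)$ derivatives of a holomorphic function commute up to torsion only in the Hessian itself. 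In particular $u$ is subharmonic for $\Delta^C$, and we also get $|\partial u|^2\le u\,|\nabla\nabla f|^2$ by Kato.

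The torsion enters when we compare $\Delta^C$ with the Riemannian Laplacian and when we apply $\Delta^C$ to a distance cutoff. We have
\[
\Delta^C \phi = \tfrac12\Delta_{LB}\phi + \langle \theta, d\phi\rangle ,
\]
where $\theta$ is built from the torsion 1-form, so $|\theta|\le C|T|$. On $B_p(R)$, the growth assumption gives $|T|\le C(1+R)^\delta$.

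The second ingredient is control of $u$ from the growth of $f$. First apply the same Bochner identity to $|f|^2$: we have $\Delta^C|f|^2=u$. We then use a cutoff $\phi=\psi(d(p,\cdot)/R)$ and integrate $u\phi^2$ against the Chern volume form. Integrating by parts in the $\bar\partial$ direction produces a boundary-free identity with an error term involving $\theta$. The resulting Caccioppoli inequality is
\[
\int_{B_R}u \le C\bigl(R^{-2}+R^{-1}\sup_{B_{2R}}|T|\bigr)\int_{B_{2R}}|f|^2 .
\]
Since the volume is not controlled, we prefer a pointwise version. We therefore run the maximum principle on $G=\phi\, u/(A-|f|^2)$ with $A=2\sup_{B_{2R}}|f|^2\sim R^{2\widetilde\delta}$; this is Cheng--Yau's trick for holomorphic functions. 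At an interior maximum we use $\Delta^C G\le 0$, $\partial G=0$, the inequality $\Delta^C(A-|f|^2)^{-1}\ge 2u^2/(A-|f|^2)^3$-type positivity, and Kato. To handle $\Delta^C\phi$ we use the Laplacian comparison for $d$. Then the term $2u^2/(A-|f|^2)^3$ dominates, and we obtain
\[
\sup_{B_R}u \le C A\bigl(R^{-2}+R^{-1}|T|_{B_{2R}}+|\Delta^C\phi|\bigr).
\]
With $|T|\lesssim R^\delta$ this gives $u\lesssim R^{2\widetilde\delta-1+\delta}$ on $B_R$, after absorbing logs or $\epsilon$-losses into the strict inequality $\lambda>2\widetilde\delta-1+\delta$. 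Taking $R\simeq 2(1+d(q,p))$ finishes the proof.

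\textbf{The main obstacle} is the term $\Delta^C\phi$. Without a Riemannian Ricci lower bound there is no Laplacian comparison for $d(p,\cdot)$, and $\Delta_{LB}d$ is uncontrolled. To get around this, we first plan to use an exhaustion with controlled complex Hessian in place of the distance. Via Calabi's trick this needs only $|\partial\phi|\le C/R$ and an upper bound $\Delta^C\phi\le C(R^{-2}+R^{-1}|T|)$ in the barrier sense. The upper bound should follow from second-variation along minimal geodesics, since the curvature entering $\Delta^C d$ along a geodesic can be rewritten through the Chern curvature (second Ricci $\ge0$) plus torsion terms quadratic and derivative-type. The derivative-of-torsion terms are the delicate part. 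If they cannot be controlled, we fall back on a smoothing/mean-value argument: we use the integral Caccioppoli estimate above on small balls of radius comparable to $R^{-\delta}$, where the torsion is bounded. The aim is that on such balls the geometry is uniformly controlled, which would give the pointwise bound with the same exponent up to the $\epsilon$-loss permitted by $\lambda$.
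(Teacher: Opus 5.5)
Your main line is sound, but it takes a genuinely different route from the paper's.

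The paper does not localize. It views $f$ as a map into $\mathbb{C}$ equipped with the fixed complete model metric of Lemma \ref{plane}, whose curvature is $-A(1+s)^{-2-\varepsilon}$ and which satisfies $s\le Cr^{1+\varepsilon}$. From Chern--Lu it gets $\Box u\ge C(1+r)^{-(2+\varepsilon)(1+\varepsilon)\widetilde\delta}u^2$, and then runs a global Omori--Yau point-picking argument (the functions $v$ and $F_k$ in Lemma \ref{gene}). The $\varepsilon$ coming from the model metric is why it only reaches $\lambda>2\widetilde\delta-1+\delta$.

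You instead run a local Cheng--Yau/Yau--Schwarz argument on $B_{2R}$, with a scale-dependent disk as the target. It is cleanest with logarithms:
\begin{itemize}
\item $\Box\log u\ge 0$, by Bochner with $Ric^{(2)}\ge0$ plus Kato;
\item $\Box\bigl(-\log(A-|f|^2)\bigr)=Au/(A-|f|^2)^2$;
\item hence at a maximum of $G=\phi u/(A-|f|^2)$ one gets $\frac{A}{A-|f|^2}\,G\le-\Box\phi+|\partial\phi|^2/\phi$;
\item so $u\le CA\,(R^{-2}+R^{-1}K_R)$ on $B_R$, where $K_R$ is an upper barrier bound for $\Box r$ on $B_{2R}\setminus B_R$.
\end{itemize}
There are two slips in your write-up. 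First, $\Box(A-|f|^2)^{-1}=u(A+|f|^2)/(A-|f|^2)^3$ is linear in $u$; the $u^2$ only appears after multiplying by $u$. Second, since $\psi'\le0$, what you need is a lower bound on $\Box\phi$, i.e.\ an upper bound on $\Box r$, not an upper bound on $\Box\phi$.

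The step you call the main obstacle is exactly the Chen--Yang comparison, Lemma \ref{cLap_comp}(2), which the paper also simply quotes. With $Ric^{(2)}\ge0$ it gives $\Box r\le n/r+2n\sup_{B_{2R}}|T|$. It uses only a $C^0$ torsion bound, so no derivatives of torsion enter, and Calabi's trick handles the cut locus. Plugging in $K_R\lesssim R^{-1}+R^{\delta}$ and $A\lesssim R^{2\widetilde\delta}$ closes your argument. It even gives the endpoint exponent $\lambda=2\widetilde\delta-1+\delta$, and it does not need Lemma \ref{plane}.

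What the paper's global argument buys is twofold. It gives the general Schwarz lemma for targets with decaying negative curvature (Lemma \ref{gene}). It also handles the borderline case $\delta=1$ in Corollaries \ref{Liouville} and \ref{gene_coro}, where a cutoff argument only shows that $u$ is bounded, not that it vanishes.

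You should drop the fallback. Integrating by parts and absorbing the torsion term by Cauchy--Schwarz gives an error $\int\phi^2|T|^2|f|^2$, not one of size $R^{-1}\sup|T|$. Moreover, without Riemannian curvature control (see Remark \ref{rho(z)general}) there is no mean value inequality on small balls to pass from an $L^2$ bound to a pointwise one.
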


According to Remark \ref{rho(z)general}, the Riemannian Ricci curvature of a Chern-flat manifold need not be bounded from below. Therefore, standard tools from Riemannian geometry may not apply directly in our setting. Even in cases where a Cheng--Yau type estimate for harmonic functions \cite{ChengYau1975} is available, Proposition \ref{grad_intro} provides new information. For example, let $(G, g)$ be the complex Heisenberg group equipped with the left-invariant Hermitian metric in \eqref{Heisen_def} and \eqref{3dim_metric1}. Since $g$ is balanced, the complex Laplacian coincides with the Riemannian Laplacian by Lemma \ref{Lapcompre}. Consider the holomorphic function $z_3$ which has quadratic growth; see Subsection \ref{3dim_examp}. Then Cheng--Yau's gradient estimates imply that $|\nabla z_3|$ has at most quadratic growth. In comparison, Proposition \ref{grad_intro} shows that $|\nabla z_3|$ has growth order at most $\frac{3}{2}+\epsilon$ for any $\epsilon>0$. We note that neither result is sharp in this case, since $|\nabla z_3|$ actually has linear growth.

Note that any simply connected complex Lie group is Stein, by the theorem of Matsushima--Morimoto \cite{Ma1960,MM1960}. Therefore, the Hermitian manifold in Theorem \ref{subline_intro} is Stein and hence admits an abundance of holomorphic functions. The second part of this paper studies quantitative aspects of $\mathcal{O}(M)$ and $\mathcal{O}_d(M,g)$ on such manifolds.

Let $(G,g)$ be a simply connected complex Lie group equipped with a left-invariant Hermitian metric $g$. Recall that $\mathcal{O}_d(G,g)$ was defined in \eqref{porder_atmost}. We observe that $\mathcal{O}_d(G,g)$ is independent of the choice of $g$, since any two left-invariant Hermitian metrics on $G$ are quasi-isometric. From now on, we write $\mathcal{O}_d(G)=\mathcal{O}_d(G,g)$ for simplicity. Let $\mathfrak{g}$ be the Lie algebra of $G$. Recall that the lower central series of $\mathfrak g$ is defined by
\begin{equation}\label{l_central_def}
\mathfrak g_1 \supset \mathfrak g_2
\supset \mathfrak g_3\supset \cdots,\ \ \text{where}\ \ \mathfrak g_1=\mathfrak g,\ \  \mathfrak g_{k+1}=[\mathfrak g, \mathfrak g_k],\ \ k\geq 1.    
\end{equation}
Since the lower central series of $\mathfrak g$ stabilizes after finitely many steps, there exists $N\geq 1$ such that $\mathfrak g_N=\mathfrak g_{N+1}$. 
$\mathfrak g$ is called nilpotent if $\mathfrak g_N=\{0\}$. In particular, $\mathfrak g$ is $s$-step nilpotent if $\mathfrak{g}_{s+1}=\{0\}$ and $\mathfrak{g}_{s} \neq \{0\}$. With this convention, an abelian Lie algebra is $1$-step nilpotent. In general, the series stabilizes at a nontrivial subalgebra. Based on this observation, we show that the characterization of $\mathcal{O}_d(G)$ can be reduced to the corresponding problem on a nilpotent group.

\begin{theorem}\label{poly_space_char_intro}
   Let $(G,g)$ be a simply connected complex Lie group equipped with a left-invariant Hermitian metric. Suppose that the lower central series of its Lie algebra $\mathfrak{g}$ stabilizes at the $s$-th term $\mathfrak{g}_s$. Let $\widehat{H}$ be the connected Lie subgroup of $G$ with Lie algebra $\mathfrak{g}_s$. Then $\widehat{H}$ is closed, and the quotient group $G/\widehat{H}$ is simply connected and carries a natural left-invariant Hermitian metric $\widehat{g}$ induced by $(G,g)$. Moreover, $\mathcal{O}_d(G)$ is isomorphic to $\mathcal{O}_d(G/\widehat{H})$. In particular, there exist no non-constant holomorphic functions with polynomial growth on $(G, g)$ if and only if its Lie algebra $\mathfrak{g}$ is perfect (i.e. $\mathfrak{g}=[\mathfrak{g}, \mathfrak{g}]$).
\end{theorem}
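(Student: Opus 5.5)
The plan has three parts: the structural facts about $\widehat{H}$ and $G/\widehat{H}$, the comparison of growth on $G$ and on the quotient, and the key claim that every polynomial-growth holomorphic function on $G$ is constant on the cosets of $\widehat{H}$. The last part is the real content.

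\textbf{Structure.} Since $\mathfrak{g}_s=[\mathfrak g,\mathfrak g_{s-1}]$ is an ideal of $\mathfrak g$, $\widehat H$ is a connected normal subgroup. For closedness, use that $G$ is simply connected, so by Lie's third theorem the ideal $\mathfrak g_s$ integrates to a closed normal subgroup. Concretely, $G/\widehat H$ is the simply connected group with Lie algebra $\mathfrak g/\mathfrak g_s$. The kernel of $G\to \widetilde{G/\widehat H}$ is closed and connected, since the long exact homotopy sequence gives $\pi_0$ of the kernel trivial when both groups are simply connected. This identifies $\widehat H$ as that kernel, so $\widehat H$ is closed and $G/\widehat H$ is simply connected. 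Everything here is complex, so $G/\widehat H$ is a complex Lie group. Its metric $\widehat g$ is defined by restricting $g_e$ to $\mathfrak g_s^{\perp}\cong \mathfrak g/\mathfrak g_s$ and translating by left multiplication. With this choice $\pi:G\to G/\widehat H$ is a Riemannian submersion up to quasi-isometry, and in particular $\pi$ is distance nonincreasing.

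\textbf{Growth comparison and pullback.} If $u\in \mathcal O_d(G/\widehat H)$, then $u\circ\pi$ is holomorphic, and because $\widehat d(\pi q,\pi p)\le d(q,p)$ it lies in $\mathcal O_d(G)$. So pullback is an injective linear map $\mathcal O_d(G/\widehat H)\to\mathcal O_d(G)$. For surjectivity, suppose $f\in\mathcal O_d(G)$ is $\widehat H$-invariant. Then $f$ descends to a holomorphic $u$ on $G/\widehat H$. To get the growth bound, note that any point of $G/\widehat H$ at distance $r$ from the base point lifts, via horizontal geodesics, to a point of $G$ at distance at most $r$ (up to a constant). Hence $u$ also has order $\le d$.

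\textbf{Key step: invariance under $\widehat H$ (main obstacle).} I would show that for $f\in\mathcal O_d(G)$ and each $X\in\mathfrak g_s$, the left-invariant holomorphic vector field $X$ satisfies $Xf=0$; since $\widehat H$ acts by right translation along such fields, this gives invariance. The mechanism is a quadratic-in-time bound on displacement. Because $\mathfrak g_s=[\mathfrak g,\mathfrak g_s]$, every $X\in\mathfrak g_s$ is a finite sum of brackets $[Y,Z]$ with $Z\in\mathfrak g_s$. For a single bracket, the commutator $\exp(tY)\exp(tZ)\exp(-tY)\exp(-tZ)$ approximates $\exp(t^2[Y,Z])$ while having length $O(t)$. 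Iterating this gives $d(e,\exp(TX))=O(\sqrt T)$, and an easy bootstrap improves this to $O(T^{\epsilon})$ for every $\epsilon>0$, since the relation can be iterated indefinitely; this self-reproduction is exactly the sense in which $\mathfrak g_s$ is a perfect-like ideal. Now fix $q$ and consider the entire function $\phi(w)=f(q\exp(wX))$ on $\mathbb C$. Left-invariance of the metric gives
\[
|\phi(w)|\le C(1+d(e,\exp(wX)))^d=O(|w|^{\epsilon d}),
\]
and choosing $\epsilon d<1$, Liouville's theorem makes $\phi$ constant. The delicate part is making the sublinear distortion estimate rigorous. I would try to prove it via the exponential distortion of $\widehat H$ in $G$, or via the nilpotent quotient structure. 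Alternatively, I could first treat a Levi decomposition and handle the semisimple and solvable parts separately, though that route is messier.

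\textbf{Final claim.} Combining the three steps gives $\mathcal O_d(G)\cong \mathcal O_d(G/\widehat H)$. The group $G/\widehat H$ is nilpotent, and it is trivial exactly when $\mathfrak g_s=\mathfrak g$, i.e.\ when $\mathfrak g$ is perfect. If $G/\widehat H$ is trivial, all polynomial-growth holomorphic functions on $G$ are constant. If it is nontrivial, $\mathfrak g/[\mathfrak g,\mathfrak g]\neq 0$, and a nonzero complex linear functional on this abelian quotient gives a holomorphic homomorphism $G\to\mathbb C$. This homomorphism is Lipschitz for the left-invariant metric, so it has linear growth and is nonconstant.
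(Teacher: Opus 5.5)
\textbf{There is a genuine gap in your key step.} Your structural part, the submersion comparison of growth, and the final perfectness criterion are fine.

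The estimate $d(\mathbf{1}_G,\exp(TX))=O(T^{\epsilon})$ for every $X\in\mathfrak g_s$ is false as soon as $\mathfrak g$ has a nontrivial Levi factor. Take $G=\operatorname{SL}(2,\mathbb C)$. It is perfect, so $\mathfrak g_s=\mathfrak g$. Let $H=\operatorname{diag}(1,-1)$.

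\begin{itemize}
\item For any left-invariant metric, $x\mapsto\log\|x\|$ (operator norm) is Lipschitz. This is because $\|xy\|\le\|x\|\,\|y\|$ and the $g$-length of $\gamma'$ equals the norm of $\gamma^{-1}\gamma'\in\mathfrak g$.
\item Hence $d(\mathbf 1_G,\exp(tH))\ge |t|/C$ for real $t$, so this one-parameter subgroup is undistorted.
\item Consequently $\phi(w)=f(q\exp(wH))$ is only bounded by $C(1+|w|)^d$, and your Liouville step yields only that $\phi$ is a polynomial of degree at most $d$.
\end{itemize}

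The error is the commutator heuristic. The approximation $\exp(tY)\exp(tZ)\exp(-tY)\exp(-tZ)\approx\exp(t^2[Y,Z])$ holds for small $t$. Globally it holds only in nilpotent groups, where the grading controls the Baker--Campbell--Hausdorff tail. Let $E$ and $F$ be the strictly upper and lower triangular elementary matrices, so $[E,F]=H$. Then
\[
\exp(tE)\exp(tF)\exp(-tE)\exp(-tF)=\begin{pmatrix}1+t^2+t^4&-t^3\\ t^3&1-t^2\end{pmatrix},
\]
whose norm is of order $t^4$. By contrast, $\exp(t^2H)$ lies at distance at least $t^2/C$ from the identity. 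So the $O(\sqrt T)$ claim and its bootstrap to $O(T^{\epsilon})$ have nothing to stand on. Your fallback via exponential distortion of $\widehat H$ fails for the same reason, since here $\widehat H=G$.

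\textbf{How the paper avoids distortion.} Since $(G,g)$ is Chern-flat with constant torsion norm, Proposition \ref{grad_intro} applies with $\delta=0$.
\begin{itemize}
\item For $f\in\mathcal O_d(G)$ and a left-invariant unitary holomorphic frame $\{e_i\}$, each $e_i(f)$ lies in $\mathcal O_{d-1/2+\epsilon}(G)$.
\item Iterating, all left-invariant derivatives of order $m$ have growth order below $\tfrac12$ once $m$ is large. They are then constant by Corollary \ref{Liouville}, so all derivatives of order $m+1$ vanish.
\item Because $\mathfrak g_s=[\mathfrak g,\mathfrak g_s]$, every $X\in\mathfrak g_s$ is a combination of iterated brackets of length $m+1$. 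Thus $Xf$ expands into derivatives of order $m+1$, and $Xf\equiv 0$.
\end{itemize}

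\textbf{If you want to keep a distortion argument.} You would need two things.
\begin{itemize}
\item Use that $\{X\in\mathfrak g : Xf\equiv 0\}$ is a Lie subalgebra.
\item Apply Liouville only along genuinely distorted directions, produced by conjugation rather than commutators.
\end{itemize}
For example, in $\operatorname{SL}(2,\mathbb C)$ one has $\exp(sH)\exp(E)\exp(-sH)=\exp(e^{2s}E)$. This gives $d(\mathbf 1_G,\exp(wE))=O(\log|w|)$, hence $Ef=Ff=0$, and then $Hf=[E,F]f=0$. In general you would still have to prove that $\mathfrak g_s$ is generated by such directions, for instance by the nonzero weight spaces of a Cartan subalgebra. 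That is real work your proposal does not contain.
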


It turns out that there is a connection between the nilpotent group structure and the existence of global holomorphic functions of polynomial growth that give local coordinates.

\begin{corollary}\label{nilpotent_alg_intro}
    Let $(G,g)$ be a simply connected Lie group with a left-invariant Hermitian metric. Assume $\operatorname{dim} G=n$. Then $\mathfrak{g}$ is nilpotent if and only if there exist holomorphic functions $f_1,\cdots,f_n$ with polynomial growth which give local coordinates near the identity $\mathbf{1}_G \in G$. In this case, $G$ is biholomorphic to $\mathbb{C}^n$.
\end{corollary}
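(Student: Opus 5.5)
We prove the two implications separately, and obtain the final biholomorphism claim from the construction used in the forward direction.

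\emph{Forward direction: $\mathfrak g$ nilpotent gives polynomial-growth coordinates.} Suppose $\mathfrak g$ is nilpotent. Since $G$ is simply connected, the exponential map $\exp:\mathfrak g\to G$ is a biholomorphism, and it identifies $G$ with $\mathbb{C}^n$. We choose a Malcev-type basis adapted to the lower central series $\mathfrak g=\mathfrak g_1\supset\cdots\supset\mathfrak g_{s+1}=0$, and take $f_1,\dots,f_n$ to be the linear coordinate functions on $\mathfrak g$, transported to $G$ via $\exp^{-1}$. By the Baker--Campbell--Hausdorff formula, which is a finite sum here, the group law is polynomial in these coordinates, so each $f_i$ is holomorphic on $G$. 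These functions are local coordinates at $\mathbf 1_G$ by construction. Assign to a coordinate in $\mathfrak g_k\setminus\mathfrak g_{k+1}$ the weight $k$. The standard ball--box (Guivarc'h / Nagel--Stein--Wainger) estimate for left-invariant metrics on nilpotent groups then gives
\[
|f_i(q)|\le C\bigl(1+d(q,\mathbf 1_G)\bigr)^{k_i},
\]
where $k_i$ is the weight of $f_i$. We would verify this estimate by induction on the step: write $q$ as a product of $m\lesssim d$ unit-size elements, and note that the BCH polynomial bounds the weight-$k$ coordinate of the product by $Cm^k$. So every $f_i$ has polynomial growth, and $G\cong\mathbb C^n$ biholomorphically.

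\emph{Converse: polynomial-growth coordinates force $\mathfrak g$ nilpotent.} Suppose $f_1,\dots,f_n\in\bigcup_d\mathcal O_d(G)$ give local coordinates near $\mathbf 1_G$. Let $\mathfrak g_s$ be the term where the lower central series stabilizes, and $\widehat H$ the corresponding connected subgroup. By Theorem~\ref{poly_space_char_intro}, each $f_i$ is the pullback of a function on $G/\widehat H$ under the quotient map $\pi$, and is therefore constant along the cosets of $\widehat H$. I expect to read this off from how the isomorphism $\mathcal O_d(G)\cong\mathcal O_d(G/\widehat H)$ is built; if the theorem only asserts an abstract isomorphism, I would instead re-extract this invariance from its proof. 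Granting it, the differentials satisfy $df_i(X)=0$ for every $X\in\mathfrak g_s$ at $\mathbf 1_G$. Since the $df_i$ span $T^*_{\mathbf 1}G$, this forces $\mathfrak g_s=0$. Hence $\mathfrak g_s=\mathfrak g_{s+1}=0$, and $\mathfrak g$ is nilpotent.

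The main obstacle is the polynomial growth bound in the forward direction. It requires the comparison $d(\mathbf 1,\exp X)\gtrsim\sum_k|X_k|^{1/k}$ between the left-invariant Riemannian distance and the graded homogeneous norm. I would first try to derive it directly from BCH together with the fact that a curve of length $L$ can be split into $\lceil L\rceil$ pieces of length at most $1$. Failing that, I would invoke the ball--box theorem from sub-Riemannian geometry, applied to the distribution generated by a complement of $\mathfrak g_2$, which is quasi-isometric to $g$ at large scales.
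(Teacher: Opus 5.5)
Your proposal is correct and follows the paper's proof. The converse is the paper's argument: as in the proof of Theorem~\ref{poly_space_char_intro}, polynomial-growth functions are annihilated by the left-invariant fields in $\mathfrak g_s$, so the Jacobian degenerates unless $\mathfrak g_s=0$. The forward direction uses exponential coordinates adapted to the lower central series, with the weight-$k$ coordinates lying in $\mathcal O_k(G)$, as in Proposition~\ref{coordi_grow}.

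The one difference is how you prove the growth bound. You split a minimizing geodesic into unit steps and iterate BCH, which is a discrete version of the paper's integration of the coframe \eqref{nil_coframe} along curves. Both arguments are inductions on the weight, and both work.
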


Combining Theorems \ref{subline_intro} and \ref{poly_space_char_intro}, we prove the following sharp upper dimension estimates on holomorphic functions of polynomial growth on Chern-flat manifolds.

\begin{theorem}\label{poly_upperbound_intro}
Let $(M, g)$ be a complete Chern-flat Hermitian manifold of complex dimension $n$. Fix a point $p \in M$ and assume that the Chern torsion satisfies $|T|(q) \le C(1+d(q,p))^{\delta}$ for constants $0 \leq \delta<1$ and $C>0$. Then, for any $d\geq 0$, we have
    \begin{equation}
        \operatorname{dim}\mathcal{O}_d(M, g) \leq \operatorname{dim}\mathcal{O}_{\lfloor d \rfloor}(\mathbb{C}^n).\label{uppbound_intro}
    \end{equation}
Here, $\lfloor c \rfloor$ denotes the greatest integer less than or equal to $c$ and $\mathcal{O}_{\lfloor d \rfloor}(\mathbb{C}^n)$ denotes the vector space of complex polynomials of $z_1, \cdots, z_n$ of degree $\leq d$. Moreover, the equality in \eqref{uppbound_intro} holds for some $d\geq 1$ if and only if $(M, g)$ is holomorphically isometric to the complex Euclidean space $\mathbb{C}^n$.
\end{theorem}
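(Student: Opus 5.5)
We would first reduce to the universal cover and then to a nilpotent group. By Theorem \ref{subline_intro}, the universal cover $\widetilde M$ of $(M,g)$ is holomorphically isometric to a simply connected complex Lie group $(G,\widetilde g)$ with a left-invariant metric. Pulling back along the covering map $\pi$ is injective on holomorphic functions. Since $d_{\widetilde g}(\widetilde q,\widetilde p)\ge d_g(\pi\widetilde q,p)$, the pullback of any $f\in\mathcal{O}_d(M,g)$ lies in $\mathcal{O}_d(G)$. Hence $\dim\mathcal{O}_d(M,g)\le\dim\mathcal{O}_d(G)$. By Theorem \ref{poly_space_char_intro}, $\mathcal{O}_d(G)\cong\mathcal{O}_d(G/\widehat H)$, and $G/\widehat H$ is a simply connected complex Lie group with Lie algebra $\mathfrak g/\mathfrak g_s$, which is nilpotent, of complex dimension $m\le n$. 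It therefore suffices to prove $\dim\mathcal{O}_d(N)\le\dim\mathcal{O}_{\lfloor d\rfloor}(\mathbb{C}^m)$ for a simply connected nilpotent complex Lie group $N$ of dimension $m$, since $\dim\mathcal{O}_{\lfloor d\rfloor}(\mathbb C^m)\le\dim\mathcal{O}_{\lfloor d\rfloor}(\mathbb C^n)$.

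For the nilpotent case, we identify $N$ with $\mathbb C^m$ via exponential coordinates of the second kind adapted to a basis compatible with the filtration $\mathfrak g_k$; a generator from $\mathfrak g_k\setminus\mathfrak g_{k+1}$ gets weight $w_i=k$. The group law is then polynomial, and the sub-Riemannian/ball-box comparison for left-invariant metrics on nilpotent groups (Guivarc'h type estimates) gives
\[
d(\mathbf 1,z)\asymp\sum_i |z_i|^{1/w_i}\ \text{ for large } z,
\]
up to the usual quasi-norm equivalence. A holomorphic $f$ with $|f|\le C(1+d)^d$ then satisfies, by Cauchy estimates on polydiscs of radii $R^{w_i}$, that $f$ is a polynomial whose monomials $z^\alpha$ satisfy $\sum_i w_i\alpha_i\le d$. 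Since $w_i\ge1$, these monomials are among those with $|\alpha|\le\lfloor d\rfloor$, which gives the inequality. I expect the main obstacle to be justifying this distance comparison uniformly, in particular that the left-invariant Riemannian distance grows at least like the homogeneous quasi-norm. Here I would invoke the standard nilpotent Lie group results rather than reprove them.

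For rigidity, suppose equality holds for some $d\ge1$. Every inequality in the chain must then be an equality. First, $m=n$ and all weights are $1$, since otherwise some coordinate $z_i$ with $w_i\ge2$ would be excluded from the degree-$1$ monomials. So $\mathfrak g$ is abelian, $G=\mathbb C^n$, and $\widetilde g$ is a constant Hermitian metric, i.e.\ flat $\mathbb C^n$. Second, the pullback map must be onto $\mathcal{O}_d(\mathbb C^n)$, so every linear coordinate function $z_i$ descends to $M$. This makes the deck group act trivially, because the deck transformations are isometries and hence translations, and the $z_i$ are invariant under them. Thus $\pi$ is injective, and $M\cong\mathbb C^n$ holomorphically isometrically. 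The converse direction is immediate.
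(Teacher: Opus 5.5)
Your proof is correct. Its skeleton matches the paper's: pass to the universal cover, reduce to the nilpotent quotient $G/\widehat H$ via Theorem \ref{poly_space_char_intro}, then apply Cauchy estimates in global exponential-type coordinates. The key estimate and the rigidity mechanism, however, are genuinely different.

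For the inequality, the paper needs only the isotropic inclusion $B_E(\mathbf{1}_G,r)\subset B_g(\mathbf{1}_G,r)$ of Proposition \ref{balls_compare}. This is immediate because $t\mapsto\exp(tX)$ has $g$-length $t|X|$. It already shows that every $f\in\mathcal{O}_d$ is a polynomial of degree at most $\lfloor d\rfloor$ in first-kind exponential coordinates, with no Guivarc'h-type input. For rigidity, the paper uses the lower bound in Proposition \ref{coordi_grow}: explicit horizontal curves show $z^p\notin\mathcal{O}_{2-\epsilon}(G)$ for weight-two coordinates, which rules out $(z^p)^{\lfloor d_0\rfloor}\in\mathcal{O}_{d_0}$.

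You instead use the anisotropic inclusion $\mathcal{O}_d(N)\subset\operatorname{span}\{z^\alpha : \sum_i w_i\alpha_i\le d\}$, essentially one half of Theorem \ref{func_summary_intro}. You then get both the inequality and the rigidity by pure counting, including $m=n$ from the binomial counts. This avoids the curve constructions entirely. The price is a nontrivial distance comparison on non-Carnot nilpotent groups.

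You only need the box-in-ball direction, $d_g(\mathbf{1},z)\le C\bigl(1+\sum_i |z_i|^{1/w_i}\bigr)$. In second-kind coordinates this follows from the triangle inequality together with the single-element bound $d_g(\mathbf{1},\exp(tX))\le C(1+|t|^{1/k})$ for $X\in\mathfrak{g}_k$. The lower bound you flag as the main obstacle is never used.

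Two small corrections:
\begin{itemize}
\item In the counting step, a coordinate $z_i$ with $w_i=2$ is not excluded from the degree-one monomials once $d\ge 2$. The monomial to use is $z_i^{\lfloor d\rfloor}$: it has $|\alpha|=\lfloor d\rfloor$ but weighted degree $w_i\lfloor d\rfloor\ge 2\lfloor d\rfloor>d$. This is exactly the paper's choice $(z^p)^{\lfloor d_0\rfloor}$.
\item Deck transformations of flat $\mathbb{C}^n$ are affine unitary maps $z\mapsto Uz+b$, not necessarily translations. This is harmless, since invariance of all the $z_i$ already forces each deck transformation to be the identity. That final descent step is in fact more explicit in your write-up than in the paper's.
\end{itemize}
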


We remark that the existence of a nonconstant holomorphic function of linear growth on a complete simply connected Chern-flat Hermitian manifold $(M,g)$ does not imply that $(M,g)$ splits off an isometric $\mathbb{C}$-factor. This is in sharp contrast with the splitting theorem for complete K\"ahler manifolds with nonnegative bisectional curvature established by Ni--Tam \cite{NT2003}.
Nevertheless, Theorem \ref{poly_upperbound_intro} shows that one can still obtain rigidity results at the level of the collective behavior of holomorphic functions of polynomial growth, viewed as a vector space. Theorem \ref{poly_upperbound_intro} is motivated by results of Ni \cite{Ni2004} and Chen--Fu--Yin--Zhu \cite{CFYZ} on sharp dimension estimates for holomorphic functions of polynomial growth on complete K\"ahler manifolds with nonnegative bisectional curvature. See also \cite{Liu2016,YZ2022,Chu2025} for further generalizations. 

In contrast to the approaches in the references mentioned above, which rely on monotonicity formulas derived either from heat flow or from the three-circle theorem for K\"ahler manifolds under suitable nonnegative curvature assumptions, our argument exploits the algebraic structure of nilpotent Lie groups. Let $G$ be a simply connected complex Lie group whose Lie algebra $\mathfrak g$ is $s$-step nilpotent. We consider the natural adapted basis associated with the lower central series; see \eqref{adapt_decomp}. We show that the global exponential coordinates associated with this basis give a complete characterization of $\mathcal{O}_d(G)$; see Theorem \ref{func_summary_intro}. Our proof is inspired by methods from sub-Riemannian geometry. Pansu's asymptotic theorem \cite{Pansu1983} relates such a nilpotent group to its associated Carnot group. On this Carnot group with the corresponding limit metric (a Carnot-Carath\'eodory distance), function theory can be studied using the classical Cauchy estimate on anisotropic polydisks. We refer to \cite{LeDonne_B} or Section \ref{subsec_nilpot} for the relevant background.

\begin{theorem}\label{func_summary_intro}

Let $G$ be a simply connected complex Lie group with a left-invariant metric $g$. Assume that $\operatorname{dim} G=n$ and its Lie algebra $\mathfrak{g}$ is $s$-step nilpotent. We choose an adapted basis $\{e_1, \ldots, e_n\}$ of its Lie algebra in the sense of \eqref{adapt_decomp}. According to \eqref{adapted_index}, we assume that each index $1 \leq i \leq n$ can be written as $(d_i, \alpha_{i})$ where $1 \leq \alpha_i \leq n_{d_i}$. Here the integers $d_i (1\leq i \leq n)$ exhaust all integers from $1$ to $s$. Let $\exp: \mathfrak{g} \to G$ be the exponential map and $(z^1, \ldots, z^n)$ be the corresponding global coordinates with respect to $\{e_1, \cdots, e_n\}$. Then
        \begin{equation}
             \mathcal{O}_d(G)=
        \operatorname{span} \Bigl\{(z^1)^{\alpha_1}(z^2)^{\alpha_2}\cdots(z^n)^{\alpha_n}\ \ | \ \ \ \alpha_1 d_1+\alpha_2 d_2 +\cdots+\alpha_n d_n \le d \Bigr\}.
        \end{equation}
\end{theorem}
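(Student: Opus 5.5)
We aim to prove both inclusions by comparing the left-invariant distance with the homogeneous ``anisotropic norm'' $\|z\| := \sum_i |z^i|^{1/d_i}$ in exponential coordinates. The key geometric input is the ball-box type estimate
\begin{equation*}
C^{-1}\bigl(\|z\|-1\bigr)\le d(\mathbf 1_G,\exp(\textstyle\sum z^ie_i))\le C\bigl(\|z\|+1\bigr).
\end{equation*}
We would obtain this from Pansu's asymptotic theorem \cite{Pansu1983}: the dilations $\delta_t(z)=(t^{d_i}z^i)$ rescale $(G,d)$, and $t^{-1}\,d(\mathbf 1,\delta_t\cdot)$ converges uniformly on compacta to a Carnot--Carath\'eodory distance $d_\infty$ on the associated Carnot group. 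That distance is homogeneous of degree one under $\delta_t$ and, being continuous and positive away from $0$, is comparable to $\|\cdot\|$. Since $g$ is left-invariant and all left-invariant metrics are quasi-isometric, we may work with any convenient sub-Riemannian or Riemannian metric. Alternatively, one can argue directly: the Baker--Campbell--Hausdorff formula is polynomial and weight-compatible, and $\mathfrak g_1$ generates. We would also need the remark that, in an adapted basis of \eqref{adapt_decomp}, the weight $d_i$ equals the layer index, so BCH respects the grading of the associated graded algebra.

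For the inclusion ``$\supseteq$'', take a monomial $z^\alpha$ with $\sum\alpha_i d_i\le d$. Then
\[
|z^\alpha|\le \prod_i \|z\|^{\alpha_i d_i}\le (\|z\|+1)^{d}\le C\,(d(\mathbf 1,q)+1)^d
\]
by the lower bound in the ball-box estimate. Each monomial is holomorphic because $\exp$ is a biholomorphism for a simply connected nilpotent complex Lie group, and so the monomial lies in $\mathcal O_d(G)$.

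For the inclusion ``$\subseteq$'', take $f\in\mathcal O_d(G)$. Since $\exp$ is a biholomorphism $\mathbb C^n\to G$, $f$ is an entire function of $z$, and we expand $f=\sum_\alpha a_\alpha z^\alpha$. Consider the anisotropic polydisc
\[
P_R=\{|z^i|\le R^{d_i}\}.
\]
On $P_R$ we have $\|z\|\le nR$, so the upper ball-box bound gives $d(\mathbf 1,q)\le C(R+1)$ and therefore $|f|\le C'(R+1)^d$ on $P_R$. The Cauchy estimate on this polydisc yields
\[
|a_\alpha|\le C'(R+1)^d\,R^{-\sum\alpha_i d_i}.
\]
Letting $R\to\infty$ forces $a_\alpha=0$ whenever $\sum\alpha_id_i>d$. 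Hence $f$ is a finite combination of the allowed monomials. This also explains the role of the hypothesis that the $d_i$ exhaust $1,\dots,s$, since all layers occur.

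The main obstacle is the upper bound $d\le C(\|z\|+1)$, that is, showing that a point with $|z^i|\sim R^{d_i}$ is reachable within distance $O(R)$. This is the nontrivial controllability and ball-box direction. The lower bound follows more easily from integrating the weights along curves: left-invariant vector fields have coefficients that are polynomials of controlled weighted degree in exponential coordinates. For the upper bound we would first try induction on the step $s$. Elements of the top layer $\mathfrak g_s$ are central, and $\exp(w)$ for $w\in\mathfrak g_s$ with $|w|\sim R^s$ can be written as a bounded number of iterated commutators of elements $\exp(\pm R\,x)$ with $x\in\mathfrak g_1$, up to correction terms. Each commutator word has length $O(R)$. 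We then quotient by $\exp(\mathfrak g_s)$ and apply the inductive hypothesis, noting that the quotient metric is again left-invariant. If this becomes messy, we fall back on citing Pansu's theorem together with the homogeneity of $d_\infty$, as in \cite{LeDonne_B}.
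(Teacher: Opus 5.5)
Your proposal is correct and follows essentially the same route as the paper: the inclusion $\supseteq$ comes from the weighted polynomial structure of the left-invariant (co)frame in exponential coordinates, integrated along curves (the paper's Proposition \ref{coordi_grow}, Step 1), and the inclusion $\subseteq$ comes from comparing $d_g(\mathbf{1}_G,\cdot)$ with the anisotropic boxes via Pansu/Guivarc'h and the ball-box theorem, followed by Cauchy estimates on the polydisks $\{|z^i|\le R^{d_i}\}$. Your homogeneous quasi-norm $\|z\|$ repackages the paper's transfer $\mathcal{F}^*:\mathcal{O}_d(G_\infty,d_\infty)\to\mathcal{O}_d(G)$ combined with Lemma \ref{ballbox} on the associated Carnot group.
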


\subsection{Further discussion} 

Theorems \ref{subline_intro}, \ref{poly_space_char_intro}, and \ref{func_summary_intro} provide a complete characterization of holomorphic function theory on complete simply connected Chern-flat Hermitian manifolds with torsion of sublinear growth. It is natural to ask whether analogous results remain valid beyond the sublinear range. When the torsion has at most linear growth, we obtain the following Liouville-type result for bounded holomorphic functions.

\begin{proposition}[a special case of Corollary \ref{Liouville}]\label{Liouv_intro}
Let $(M,g)$ be a complete Hermitian manifold with nonnegative second Ricci curvature. Fix $p\in M$ and assume that there exists a constant $C>0$ such that the torsion satisfies
\[
|T|(q)\leq C \bigl(1+d(q,p)\bigr)
\]
for all $q\in M$. Then any bounded holomorphic function on $M$ is constant.   
\end{proposition}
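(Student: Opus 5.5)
Apply Proposition \ref{grad_intro} with $\delta=1$ and $\widetilde{\delta}=0$. A bounded holomorphic function $f$ lies in $\mathcal{O}_0(M,g)$, so the admissible exponents are $\lambda>2\cdot 0-1+1=0$. For every $\epsilon>0$ this gives
\[
|\nabla f|^2(q)\le C_\epsilon\bigl(1+d(q,p)\bigr)^{\epsilon}.
\]
This alone does not prove constancy. The plan is to feed the bound back into the same Bochner-type computation that proves Proposition \ref{grad_intro}, and push the effective exponent below zero.

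\textbf{Step 1 (Bochner inequality).} For holomorphic $f$, compute the complex Laplacian $\Delta^c = g^{i\bar j}\partial_i\partial_{\bar j}$ of $u=|\nabla f|^2$ using the Chern connection. One obtains
\[
\Delta^c u = |\nabla\nabla f|^2 + \mathrm{Ric}^{(2)}(\nabla f,\overline{\nabla f}) + (\text{torsion terms}).
\]
Since $\mathrm{Ric}^{(2)}\ge 0$, the curvature term can be dropped. The torsion terms have the form $T*\nabla\nabla f*\overline{\nabla f}$, so Cauchy--Schwarz absorbs them:
\[
\Delta^c u \ge \tfrac12|\nabla\nabla f|^2 - C|T|^2 u .
\]
In the same way, $\Delta^c|f|^2 = u$.

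\textbf{Step 2 (cutoff argument).} Consider $F = \varphi\,u\,(A-|f|^2)^{-1}$ or a similar Cheng--Yau quotient, with $A = 2\sup|f|^2$ and $\varphi$ a cutoff supported on $B_p(R)$. The key point is that the good term $u^2/(A-|f|^2)^{2}$ coming from $\Delta^c|f|^2 = u$ dominates. The bad term $|T|^2u\lesssim R^2u$ competes with it, but this loss is exactly compensated by the already-established bound $u\lesssim R^{\epsilon}$.

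The alternative I would try first is cleaner and avoids the maximum principle. Integrate $\Delta^c|f|^2=u$ against a cutoff $\varphi$ with $|\nabla\varphi|\le C/R$ and integrate by parts. Because $\Delta^c$ differs from a divergence by a torsion term, this yields
\[
\int \varphi^2 u \le \int \bigl(|\nabla\varphi|+\varphi|T|\bigr)\,|f|\,|\nabla f|\,\varphi .
\]
With $|f|$ bounded and $|T|\le CR$, this does not close either. So I would return to the maximum principle. Apply it on the annulus-free quantity $\varphi\, u/(A-|f|^2)^2$, and choose $\varphi = \psi(d/R)$ with Laplacian comparison justified by the nonnegative second Ricci curvature, as presumably used in the proof of Proposition \ref{grad_intro}. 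At a maximum point this produces an inequality of the form
\[
\sup_{B_{R/2}} u \le C\bigl(R^{-2} + R^{2\delta}\cdots\bigr)
\]
with the precise exponent balance $\lambda = 2\widetilde\delta - 1 + \delta$.

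\textbf{Main obstacle and fallback.} The critical case $\lambda = 0$ is exactly borderline, so the crude argument only gives boundedness up to $\epsilon$. I expect the main obstacle to be removing the $\epsilon$ and obtaining decay. My fallback is a Liouville argument based on the pluri-subharmonic structure. Since $u$ grows subpolynomially (slower than $R^\epsilon$ for every $\epsilon$), apply the maximum principle to $|f-f(q_0)|^2$ combined with the sub-mean-value or three-circle type behaviour along holomorphic curves. Concretely, restrict $f$ to the integral curves of the holomorphic vector field $\nabla f$ (that is, $\bar\partial$-gradient flow lines). These are Riemann surfaces on which $f$ is bounded holomorphic. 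Using the gradient bound, the images should be parabolic (of subquadratic area growth), which forces $f$ to be constant along them, hence $\nabla f\equiv 0$. The general Corollary \ref{Liouville} presumably packages exactly this critical-exponent argument. Verifying parabolicity with only $R^\epsilon$ control is where I expect the real work to lie.
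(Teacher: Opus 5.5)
There is a genuine gap. Nothing in your proposal forces $\nabla f\equiv 0$, as you concede: the cutoff argument only returns boundedness up to $\epsilon$. Three steps have concrete problems.

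(i) Your Bochner formula carries torsion terms that are not there. For the Chern connection $T(e_k,\overline{e}_l)=0$, so the Ricci identity for mixed derivatives of the holomorphic form $\partial f$ involves only curvature. Thus $\Box|\nabla f|^2$ is given exactly by \eqref{chernlu2} with a flat target: a Hessian term plus a $Ric^{(2)}$ term. A term $-C|T|^2u\sim -R^2u$ could not be absorbed, and your claim that it is ``exactly compensated'' by $u\lesssim R^{\epsilon}$ has no justification. Torsion enters only through the Chen--Yang comparison $\Box r\le n/r+C(1+r)$ of Lemma \ref{cLap_comp}.

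(ii) Even with the correct formula, a cutoff $\psi(d/R)$ at linear scale produces an error $|\psi'|\,\Box r/R\approx C$ on $B_p(R)\setminus B_p(R/2)$. This does not decay, because $\Box r\lesssim R$ there. So a local Cheng--Yau estimate stalls at $\sup_{B_{R/2}}\le C$ precisely when $\delta=1$.

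(iii) The fallback does not work as stated. The $(1,0)$-gradient $g^{i\bar j}\overline{f_j}\,\partial_i$ is not a holomorphic vector field, so its flow lines are not Riemann surfaces, and no parabolicity is established.

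What is missing is a superlinear differential inequality combined with a maximum principle whose weight grows slowly enough to kill that $O(1)$ error. Your quotient is the right object. Indeed $\tilde u:=A|\nabla f|^2/(A-|f|^2)^2$ equals $\operatorname{tr}_g f^*h$ for the Poincar\'e-type metric $h=A|dw|^2/(A-|w|^2)^2$ on a disc containing $f(M)$. The Chern--Lu formula with $Ric^{(2)}\ge 0$ then gives $\Box\tilde u\ge c\,\tilde u^2$. Moreover, $\tilde u$ is bounded by Lemma \ref{gene}, since the exponent there is $\alpha\beta+\delta-1=0$. The paper does exactly this, using the metric of Lemma \ref{plane} on the bounded image.

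Instead of a cutoff, the paper maximizes $\tilde u/[\ln(2+r^2)]^{1/k}$ at a point $x_k$. Using Calabi's trick at cut points, one gets there
\[
c\,\tilde u^2\le\Box\tilde u\le \tilde u\,\frac{C+2r\Box r}{k(2+r^2)\ln(2+r^2)}\le\frac{C\,\tilde u}{k\ln 2}.
\]
Hence $\tilde u(x_k)\le C/k$, and therefore $\tilde u(y)\le \tilde u(x_k)\bigl[\ln(2+r(y)^2)/\ln 2\bigr]^{1/k}\to 0$ for every $y$.

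Your $R^{\epsilon}$ bound from Proposition \ref{grad_intro} could serve the same purpose. Use the weight $(2+r^2)^{\epsilon}$; the maximum of $\tilde u/(2+r^2)^{\epsilon}$ is attained because $\tilde u\lesssim(1+r)^{\epsilon/2}$. The same computation then gives $\tilde u(y)\le C\epsilon\,(2+r(y)^2)^{\epsilon}$ with $C$ independent of $\epsilon$, and letting $\epsilon\to 0$ finishes the proof.
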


We refer the reader to Corollaries \ref{Liouville} and \ref{gene_coro} for further Liouville-type results for holomorphic functions and mappings related to Chern-flat manifolds. We suspect that the Hermitian manifold in Proposition \ref{Liouv_intro} admits no nonconstant bounded plurisubharmonic (PSH) functions. Recall that the exponential map of any connected complex Lie group $G$ with dimension $n$ is a natural dominant holomorphic map from $\mathbb{C}^n$ to $G$. Therefore, $G$ admits no nonconstant bounded PSH functions. Motivated by this comparison, we pose the following question, which is related to the first part of Question \ref{ques1_intro}.

\begin{question}\label{ques1_1_intro}
Let $(M,g)$ be a simply connected, complete, noncompact Chern-flat Hermitian manifold. Is $M$ necessarily Stein? Does the complex manifold $M$ admit the structure of a complex Lie group?
\end{question}

Conversely, given a simply connected complex parallelizable manifold $M$, can we characterize all complete Chern-flat Hermitian metrics on $M$? As illustrated by Propositions \ref{2dim_more} and \ref{Heisen_non_inv}, $\mathbb{C}^n$ admits complete Chern-flat Hermitian metrics whose torsion has arbitrarily large growth order. By contrast, much less is known for other simply connected complex parallelizable surfaces and threefolds. This leads us to formulate the following questions in low dimensions.

\begin{question}\label{ques2_intro}
Is any complete Chern-flat Hermitian metric on $\mathbb{C}^2$ holomorphically isometric to  
\begin{equation}
   \omega= \sqrt{-1} ( \varphi^1\wedge \overline{\varphi^1} + \varphi^2 \wedge \overline{\varphi^2} ) 
   \label{C2Herm_intro_2}.
\end{equation}
Here $\varphi^1=dz_1+\lambda\,e^{\rho(z_1)}dz_2$, $\varphi^2=e^{\rho(z_1)}dz_2$ for some $\rho\in\mathcal{O}(\mathbb{C})$ and a constant $\lambda \in \mathbb{C}$. In other words, we expect that the family of Hermitian metrics given by \eqref{C2Herm} and parameterized by $\lambda$ exhausts all complete Chern-flat Hermitian metrics on $\mathbb{C}^2$.
\end{question}

\begin{question}\label{ques3_intro}
Does $\operatorname{SL}(2, \mathbb{C})$ admit a complete Chern flat Hermitian metric with nonconstant torsion norm?
\end{question}

\begin{question}[A possible strengthening of Corollary \ref{nilpotent_alg_intro} in dimension $2$]\label{ques4_intro}
Consider a simply connected complete noncompact Chern-flat surface $(M, g)$. Assume that there exist $f_1, f_2 \in \mathcal{O}(M)$ of polynomial growth which give local coordinates near a fixed point $p \in M$. Is $(M, g)$ holomorphically isometric to the complex Euclidean space $\mathbb{C}^2$?
\end{question}

We have focused on complete simply connected Chern-flat Hermitian manifolds. It is natural to consider function theory on quotients of such manifolds by discrete subgroups of holomorphic automorphisms. In Subsection \ref{quotientSL}, we discuss a result due to Berteloot--Oeljeklaus \cite{BO1988} that certain noncompact quotient of $\operatorname{SL}(2, \mathbb{C})$ is non-K\"ahler, which provides a nice illustration of the interaction between group actions and function theory on complex manifolds. For further results on complex parallelizable manifolds, particularly on quotients of complex Lie groups, we refer to Akhiezer \cite{Akhiezer1990,Akhiezer_B}, Huckleberry \cite{Huckle1990}, Winkelmann \cite{Winkel1998}, and the references therein.

The paper is organized as follows. Section \ref{Sec_2} reviews basic material on Hermitian geometry, Boothby's work on Chern-flat Hermitian manifolds, and the complex Laplacian comparison theorem of Chen--Yang for Hermitian manifolds. Section \ref{Sec_3} is devoted to the proofs of Propositions \ref{grad_intro} and \ref{Liouv_intro}. Section \ref{Sec_4} contains the proofs of Theorems \ref{subline_intro}, \ref{poly_space_char_intro}, \ref{poly_upperbound_intro}, and \ref{func_summary_intro}. Finally, Section \ref{Sec_5} discusses relevant examples on Chern-flat manifolds. Throughout the paper, unless otherwise stated, we use the Einstein summation convention.

\vskip 0.2cm
%\begin{comment}
\noindent\textbf{Acknowledgments.} The third-named author would like to thank Fangyang Zheng for helpful discussions on Hermitian geometry over the years.
 
%\end{comment}

\section{Preliminaries on Hermitian geometry}\label{Sec_2}

\subsection{The torsion of the Chern connection}

We consider a Hermitian manifold $(M, J, g)$. Let $\{e_i\}$ be a local $(1, 0)$ frame and $\{\varphi^i\}$ the dual coframe. By a Hermitian connection we mean a connection which is compatible with both $g$ and $J$. The structure equation for a Hermitian connection states that
\begin{align}
D e_i&=\theta_i^j e_j, \ \ \ \ d\varphi^i=\varphi^k \wedge \theta_k^i+\tau^i.  \label{struct1}\\
d\theta_i^j&=\theta_i^p \wedge \theta_p^j+\Theta_i^j.  \label{struct2}
\end{align}
Here $\theta_i^j$ is called a connection $1$-form, $\tau^i$ a frame-valued torsion $2$-form, and $\Theta_i^j$ a $\operatorname{End}(T^{1, 0}M)$-valued curvature $2$-form. Taking exterior derivatives at both (\ref{struct1}) and (\ref{struct2}), we get two Bianchi identities. We state the first one in the following.
\begin{align}
d\tau^i=-\tau^k \wedge \theta_k^i+\varphi^k \wedge \Theta_k^i.  \label{bianchi_1}
\end{align}

As we may decompose $\theta_i^j$ into a $(1, 0)$ part and a $(0, 1)$ part, any Hermitian connection $D$ decomposes into a $(1, 0)$ part $D^{1, 0}$ and a $(0, 1)$ part $D^{0, 1}$. The \emph{Chern connection} $D$ is defined to be the unique Hermitian connection with $D^{0, 1}=\overline{\partial}$. We use the notation $X_{,i}$ for the covariant derivative of $X$ with respect to \emph{the Chern connection} $D$, and $X_{;i}$ for the covariant derivative with respect to \emph{the Levi-Civita connection} $\nabla$. Note that the torsion $\tau^i$ of the Chern connection $D$ is of type $(2, 0)$ and $\Theta_i^j$ is of type $(1, 1)$. If we write $\tau^k=T_{ij}^k \varphi^i \wedge \varphi^j$. Then we have $\tau^k (e_i, e_j)=2T_{ij}^k$. According to \cite{Gauduchon1977, Gauduchon1984}, we introduce \emph{Gauduchon's 1-form} as a $(1, 0)$ form $\eta$ as
\begin{align}
\eta_i=2T_{ki}^k,\ \     \eta=\eta_i \varphi^i,\ \ \ \theta_{L}=-(\eta+\overline{\eta})  \label{1formdef}.
\end{align}
Then we may check that
\begin{equation}
\partial \omega^{n-1}=-\eta \wedge \omega^{n-1}.    \label{eta_prop}   
\end{equation}
Now we may take conjugate in (\ref{struct1}) to define $D \overline{e_i}=\overline{\theta_i^j} \overline{e_j}$. By such an extension of $D$, we may define the full torsion tensor
for any vector $X, Y$ in $T M \otimes \mathbb{C}=T^{1, 0}M \oplus T^{1, 0}M$ by
\[
T(X, Y) \coloneqq D_X Y-D_Y X-[X, Y].
\] It follows that
\begin{align}
T(e_i, e_j)=2T_{ij}^k e_k=\tau^k (e_i, e_j)e_k, \ \ \ T(\overline{e}_i, \overline{e}_j)=2\overline{T_{ij}^k} \overline{e}_k,\ \ \ T(e_i, \overline{e}_j)=0.
\label{fulltorsion}
\end{align}
From the structural equation (\ref{struct2}), we define the Chern curvature tensor as
\begin{equation}
    \Theta_j^k(e_i, \overline{e}_p)=R_{i\overline{p}j}^k,\ \ \ \
    R_{i\overline{p}j\overline{l}}=g_{k\overline{l}} R_{i\overline{p}j}^k.  \label{cherncurv1}
\end{equation}
It is well-known (see \cite{TW2015} for example) that the commutation formulae for covariant derivatives hold. For example, we have
\begin{align}
&[D_k, D_{\overline{l}}]\alpha_j=-R_{k\overline{l} j}^i \alpha_i,\ \ \
[D_k, D_{\overline{l}}]\overline{\alpha_j}=R_{k\overline{l} \,\, \overline{j}}^{\overline{i}} \overline{\alpha_i}. 
\label{cov_commute1}\\
&[D_p, D_{\overline{q}}] T_{ij}^k=-R_{p \overline{q} i}^l T_{lj}^k-R_{p \overline{q} i}^l T_{il}^k+R_{p \overline{q}l}^k T_{ij}^l.  \label{cov_commute2}
\end{align}
From \eqref{bianchi_1}, we have the following identity; see, for example, \cite[Lemma 7, p.~1209]{YZ2018}.
\begin{lemma}\label{torsiond}
Given a Hermitian manifold $(M, g)$ and a local unitary frame $\{e_1, \cdots, e_n\}$, we have 
\[
2T_{ij, \overline{l}}^k=R_{j \overline{l}  i \overline{k}}-R_{i \overline{l}  j \overline{k}}.
\]
\end{lemma}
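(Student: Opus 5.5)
The plan is to extract the $(2,1)$-component of the first Bianchi identity \eqref{bianchi_1} and compare coefficients. Write $\tau^k=T^k_{ij}\varphi^i\wedge\varphi^j$ with $T^k_{ij}=-T^k_{ji}$. The covariant derivative of $T$ with respect to the Chern connection is defined by
\[
dT^k_{ij}-T^k_{pj}\theta_i^p-T^k_{ip}\theta_j^p+T^p_{ij}\theta_p^k
= T^k_{ij,l}\varphi^l+T^k_{ij,\overline{l}}\,\overline{\varphi^l}.
\]
Since the Chern connection has $D^{0,1}=\overline\partial$, this uses the same connection forms $\theta$ as in \eqref{struct1}.

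\textbf{Step 1: compute $d\tau^k$.} We have
\[
d\tau^k=dT^k_{ij}\wedge\varphi^i\wedge\varphi^j+T^k_{ij}\bigl(d\varphi^i\wedge\varphi^j-\varphi^i\wedge d\varphi^j\bigr),
\]
and we substitute $d\varphi^i=\varphi^p\wedge\theta_p^i+\tau^i$ from \eqref{struct1}. The terms $T^k_{ij}\varphi^p\wedge\theta^i_p\wedge\varphi^j$ and their counterparts cancel exactly against $T^k_{pj}\theta_i^p\wedge\varphi^i\wedge\varphi^j+T^k_{ip}\theta^p_j\wedge\varphi^i\wedge\varphi^j$ coming from $dT$. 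The remaining connection term $-T^p_{ij}\theta_p^k\wedge\varphi^i\wedge\varphi^j$ equals $-\tau^p\wedge\theta_p^k$. This gives
\[
d\tau^k=T^k_{ij,l}\,\varphi^l\wedge\varphi^i\wedge\varphi^j+T^k_{ij,\overline l}\,\overline{\varphi^l}\wedge\varphi^i\wedge\varphi^j-\tau^p\wedge\theta^k_p+(\text{terms }T\ast\tau,\ \text{of type }(3,0)).
\]

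\textbf{Step 2: compare with \eqref{bianchi_1}.} The $-\tau^p\wedge\theta_p^k$ terms match on both sides and cancel. Since $\Theta$ is of type $(1,1)$ and $\tau$ of type $(2,0)$, the only $(2,1)$-contributions are $T^k_{ij,\overline l}\,\varphi^i\wedge\varphi^j\wedge\overline{\varphi^l}$ on the left and $\varphi^j\wedge\Theta_j^k$ on the right. By \eqref{cherncurv1}, $\Theta_j^k=R^k_{i\overline l j}\varphi^i\wedge\overline{\varphi^l}$, so
\[
\varphi^j\wedge\Theta_j^k=-R^k_{i\overline l j}\,\varphi^i\wedge\varphi^j\wedge\overline{\varphi^l}.
\]

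\textbf{Step 3: extract coefficients.} Because $\varphi^i\wedge\varphi^j$ is antisymmetric, we antisymmetrize the right-hand coefficient in $i,j$. The left coefficient $T^k_{ij,\overline l}$ is already antisymmetric. This yields
\[
T^k_{ij,\overline l}=-\tfrac12\bigl(R^k_{i\overline l j}-R^k_{j\overline l i}\bigr),
\qquad\text{i.e.}\qquad
2T^k_{ij,\overline l}=R^k_{j\overline l i}-R^k_{i\overline l j}.
\]
In a unitary frame, $g_{k\overline m}=\delta_{km}$, so lowering the index gives $R^k_{j\overline l i}=R_{j\overline l i\overline k}$, which is the claim.

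The main obstacle is bookkeeping rather than conceptual: checking that all connection-form terms cancel exactly, so that the $(2,1)$-part of $d\tau^k+\tau^p\wedge\theta^k_p$ is precisely $T^k_{ij,\overline l}\,\overline{\varphi^l}\wedge\varphi^i\wedge\varphi^j$. A second point needing care is keeping the sign conventions of \eqref{cherncurv1} and the factor $2$ from antisymmetrization consistent.
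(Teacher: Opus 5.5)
Your proof is correct and follows the route the paper indicates: the paper derives the lemma from the first Bianchi identity \eqref{bianchi_1} and cites \cite{YZ2018} without writing out the computation, and you compare $(2,1)$-components of exactly that identity. Your cancellations of the connection-form terms, the sign in $\varphi^j\wedge\Theta_j^k$, and the factor $2$ from antisymmetrization are all consistent with the paper's conventions $\tau^k(e_i,e_j)=2T^k_{ij}$ and $\Theta_j^k(e_i,\overline{e}_p)=R^k_{i\overline{p}j}$.
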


Using \eqref{cherncurv1}, we define the Chern holomorphic sectional and bisectional curvatures for vectors $U,V\in T^{1,0}M$ as follows.
\begin{equation}
    BI(U, V)=\frac{R(U, \overline{U}, V, \overline{V})}{g(U, \overline{U})g(V, \overline{V})}=\frac{R_{i\overline{j}k\overline{l}} U^i \overline{U^j} V^k \overline{V^l}} {g(U, \overline{U})g(V, \overline{V})},\ \ \  H(U) =\frac{R(U, \overline{U}, U, \overline{U})}{g(U, \overline{U})^2}.   \label{cherncurv2}
\end{equation}
Let $c$ be a constant real number. We say a Hermitian manifold $(M, g)$ satisfies $BI(g) \geq c$ if $BI(U, V) \geq c$ for any $U, V \in T^{1, 0} M$, and $H(g) \geq c$ if $H(U) \geq c$ for any $U \in T^{1, 0} M$. We further define three types of Ricci curvature as follows.
\begin{equation}
    Ric_{i\overline{j}}^{(1)}=g^{k\overline{l}}R_{i\overline{j}k\overline{l}},\ \ Ric_{i\overline{j}}^{(2)}=g^{k\overline{l}}R_{k\overline{l}i\overline{j}},\ \ Ric_{i\overline{j}}^{(3)}=g^{k\overline{l}}R_{i\overline{l}k\overline{j}}.  \label{riccicurv}
\end{equation}

\subsection{The complex Laplacian on a Hermitian manifold}

On a Hermitian manifold $(M, g)$, we choose $\{e_i\}_{i=1}^n$ as a local \emph{unitary} frame for $T^{1, 0} M$ and $\{\varphi^i\}$ its dual unitary coframe. Given a real-valued function $u \in C^{\infty}(M)$, we write $u_k=e_k(u)$ for simplicity. We define the \emph{complex Laplacian}
\begin{align}
\Box_g u=u,_{i\overline{i}}=D_{\overline{e}_i} u_i=D^{2}u(e_i, \overline{e}_i).    \label{boxdef}
\end{align}
It turns out that $\Box_g u$ is also the trace of $\sqrt{-1}\partial \overline{\partial} u$. Indeed, from (\ref{struct1}) and $\overline{\partial}=D^{0, 1}$, we have the following. 
\begin{align*}
\partial \overline{\partial} u&=\partial(\overline{e}_j(u)\overline{\varphi}^j)=e_i(\overline{e}_j(u))\varphi^i \wedge \overline{\varphi}^j+\overline{e}_k(u) \partial(\overline{\varphi}^k) \\ &=(e_i(\overline{e}_j(u))-\overline{\theta_j^k}(e_i) \overline{e}_k(u) ) \varphi^i \wedge \overline{\varphi}^j. 
\end{align*}
In the meantime, we compare
\begin{equation*}
u,_{\overline{j}i}=e_i \overline{e}_j (u)-\overline{\theta_j^k}(e_i) \overline{e}_k(u),\ \ \ 
u,_{i\overline{j}}=\overline{e}_j e_i  (u)-\theta_i^k (\overline{e}_j) e_k(u).
\end{equation*} Now we see that $u,_{\overline{j}i}-u,_{i\overline{j}}=(-T(e_i, \overline{e}_j))u=0$ as $\tau^i$ is of $(2,0)$. It follows that
\[
\partial \overline{\partial} u=u,_{\overline{j}i}\varphi^i \wedge \overline{\varphi}^j=u,_{i\overline{j}}\varphi^i \wedge \overline{\varphi}^j.
\]

Next we recall a standard lemma on the relation between the complex Laplacian $\Box_g$ with the (Riemannian) Laplace-Beltrami operator $\Delta_g$. It follows a comparison formula which relates the connection forms of the Chern connection and the Levi-Civita connection; see, for example, \cite[Lemma 2, p.~1203]{YZ2018}.

\begin{lemma}\label{Lapcompre}
Let $f \in C^{\infty}(M)$ be real-valued. Then
\begin{align}
\Box_g f=\frac{1}{2}\Delta_g f+\operatorname{Re}(\eta_p \overline{f_p}). \label{twolap}
\end{align}
\end{lemma}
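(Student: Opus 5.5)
The identity follows from comparing the two second-order operators through the difference tensor of the Levi-Civita connection $\nabla$ and the Chern connection $D$. In both Laplacians the first-order derivatives of $f$ appear in the same way, so everything reduces to a trace of a torsion expression. I work throughout in a local unitary frame $\{e_i\}$ with $g(e_i,\overline{e}_j)=\delta_{ij}$, extend both connections complex-linearly, and use that $f$ is real.

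\textbf{Step 1: express both Laplacians through the same frame.} Write $e_i=\frac{1}{\sqrt 2}(X_i-\sqrt{-1}JX_i)$, so that $\{X_i,JX_i\}$ is a real orthonormal frame. Let $H=\nabla^2 f$ be the Riemannian Hessian, which is symmetric. Expanding gives
\[
H(e_i,\overline{e}_i)=\tfrac12\bigl(H(X_i,X_i)+H(JX_i,JX_i)\bigr),
\]
and therefore $\Delta_g f=2\sum_i H(e_i,\overline{e}_i)$.

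\textbf{Step 2: subtract the two Hessians.} By definition,
\[
H(e_i,\overline{e}_i)=\overline{e}_i e_i f-(\nabla_{\overline{e}_i}e_i)f,\qquad \Box_g f=\overline{e}_i e_i f-(D_{\overline{e}_i}e_i)f.
\]
Hence
\[
\Box_g f-\tfrac12\Delta_g f=\sum_i (A_{\overline{e}_i}e_i)f,\qquad A:=\nabla-D .
\]

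\textbf{Step 3: compute $A$ from the torsion.} The tensor $A$ is determined by two conditions:
\begin{itemize}
\item $A_XY-A_YX=-T(X,Y)$, since $\nabla$ is torsion-free and $D$ has torsion $T$;
\item $g(A_XY,Z)+g(Y,A_XZ)=0$, since both connections are metric.
\end{itemize}
The usual Koszul-type manipulation then gives
\[
g(A_XY,Z)=\tfrac12\bigl[-g(T(X,Y),Z)+g(T(Y,Z),X)-g(T(Z,X),Y)\bigr].
\]
I will fix the signs carefully, since this is where errors are likely.

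\textbf{Step 4: take the trace.} Put $X=\overline{e}_i$, $Y=e_i$ and sum over $i$. The first term vanishes because $T(\overline{e}_i,e_i)=0$ by \eqref{fulltorsion}. Test the remaining two terms against $Z=e_p$ and $Z=\overline{e}_p$, using \eqref{fulltorsion} again.
\begin{itemize}
\item For $Z=\overline{e}_p$, only $T(\overline{e}_p,\overline{e}_i)=2\overline{T^k_{pi}}\,\overline{e}_k$ survives. It pairs with $e_i$ to give $2\overline{T^i_{pi}}=-\overline{\eta_p}$, using $\eta_p=2T^k_{kp}$ and the antisymmetry of $T$.
\item For $Z=e_p$, only $T(e_i,e_p)$ survives. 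It pairs with $\overline{e}_i$ to give a multiple of $\eta_p$.
\end{itemize}
The upshot is that $\sum_i A_{\overline{e}_i}e_i$ is a combination of $\eta_p\overline{e}_p$ and $\overline{\eta_p}e_p$ with equal coefficients. Since $f$ is real, $\overline{f_p}=\overline{e}_p f$, so applying this vector to $f$ gives $\tfrac12(\eta_p\overline{f_p}+\overline{\eta_p}f_p)=\operatorname{Re}(\eta_p\overline{f_p})$.

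\textbf{Main obstacle.} The only delicate part is the bookkeeping of signs and factors of $2$:
\begin{itemize}
\item the convention $\tau^k(e_i,e_j)=2T^k_{ij}$;
\item the normalization $\eta_i=2T^k_{ki}$;
\item the sign convention for $A$.
\end{itemize}
As a consistency check, the final coefficient should make the correction term vanish exactly when $\eta=0$, which is the balanced case mentioned in the introduction. Alternatively, the sign can be confirmed on the example \eqref{C2Herm_intro}, where $T^2_{12}=\tfrac12\rho'(z_1)$ and hence $\eta=\rho'(z_1)\varphi^1$. If the direct Koszul computation becomes messy, I would instead use the known comparison formula for the connection forms of $\nabla$ and $D$ from \cite[Lemma 2]{YZ2018} and read off the trace directly.
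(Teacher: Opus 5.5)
Your proposal is correct and is essentially the paper's argument: the paper just cites the Levi-Civita/Chern connection-form comparison of \cite[Lemma 2]{YZ2018}, which is exactly the difference tensor $A=\nabla-D$ that your Koszul formula produces. Completing your Step 4 gives $\sum_i A_{\overline{e}_i}e_i=\frac12\bigl(\overline{\eta_p}\,e_p+\eta_p\,\overline{e}_p\bigr)$, hence precisely $\operatorname{Re}(\eta_p\overline{f_p})$; one slip in your optional side check is that $\eta_1=2T^2_{21}=-\rho'(z_1)$, so the example has $\eta=-\rho'(z_1)\varphi^1$ rather than $+\rho'(z_1)\varphi^1$, and with your sign that check would wrongly seem to contradict the (correct) identity.
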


\subsection{Boothby's result on Hermitian manifolds with zero Chern curvature}

In this subsection, we review Boothby's result on compact Hermitian manifolds with zero Chern curvature in \cite{Boothby1958}. We begin with a fundamental result on a flat Chern connection.

\begin{lemma}[{Boothby \cite[Theorems 1, 2, and 3]{Boothby1958}}]\label{flat_para}
Any Hermitian manifold $(M, g)$ with zero Chern curvature admits a local holomorphic unitary frame which is parallel with respect to the Chern connection. In particular, such a frame can be made global if $(M, g)$ is simply connected. Conversely, any complex parallelizable (by a holomorphic frame $\{e\}$) complex manifold $M$ admits a Hermitian metric $g$ with flat Chern curvature so that $\{e\}$ is unitary and parallel. 
\end{lemma}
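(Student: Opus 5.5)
The idea is to solve for the parallel frame locally via Frobenius, globalize it by monodromy on a simply connected manifold, and handle the converse by declaring a given holomorphic frame unitary. The tool throughout is the structure equations \eqref{struct1}--\eqref{struct2} with $\Theta=0$.

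\textbf{Local existence.} Start from an arbitrary local unitary frame $\{e_i\}$ with Chern connection forms $\theta_i^j$. Since $\Theta_i^j=0$, \eqref{struct2} gives $d\theta=\theta\wedge\theta$. I look for a change of frame $\tilde e_i=a_i^j e_j$, with $a$ a $U(n)$-valued function, such that $\tilde e$ is parallel. This is the condition $da+a\theta=0$.

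This is a Pfaffian system on $U\times GL(n,\mathbb{C})$. Its integrability condition is exactly $d\theta-\theta\wedge\theta=0$, so Frobenius gives local solutions with any prescribed initial value. Unitarity is preserved: because $D$ is metric, $\theta$ is skew-Hermitian in a unitary frame, so $d(aa^*)=0$ along solutions. Hence a unitary initial value yields a unitary $a$.

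\textbf{Holomorphy.} Next I check that a $D$-parallel frame is holomorphic. The defining property of the Chern connection is $D^{0,1}=\overline{\partial}$, and $D\tilde e_i=0$ then gives $\overline{\partial}\tilde e_i=0$. So the frame is holomorphic.

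\textbf{Globalization.} A flat connection gives a flat principal $U(n)$-bundle of unitary frames. The parallel frames form the leaves of a foliation transverse to the fibers, which amounts to parallel transport independent of homotopy class of path. If $M$ is simply connected, the holonomy is trivial. Transporting a fixed unitary frame at a base point along paths is therefore well defined, which produces a global parallel unitary, hence holomorphic, frame.

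The main technical point is this well-definedness. I would argue it by the standard fact that flatness makes parallel transport depend only on the homotopy class of the path. Concretely, a leaf of the Frobenius foliation covers $M$, and simple connectivity forces that covering to be a diffeomorphism.

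\textbf{Converse.} Given a global holomorphic frame $\{e_i\}$, define $g$ by declaring $\{e_i\}$ unitary. Let $D$ be the connection making $\{e_i\}$ parallel, i.e.\ $\theta=0$. This $D$ is metric because the frame is unitary and parallel. It has $D^{0,1}=\overline{\partial}$ because the $e_i$ are holomorphic: for a section $s=f^ie_i$ one gets $D^{0,1}s=\overline{\partial}f^i\,e_i=\overline{\partial}s$. By uniqueness, $D$ is the Chern connection. Then $\theta=0$ in \eqref{struct2} gives $\Theta=0$, so $g$ has flat Chern curvature and $\{e\}$ is unitary and parallel.
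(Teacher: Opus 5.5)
Your proposal is correct. The local step is sound: integrability of $da+a\theta=0$ is exactly $\Theta=0$, and unitarity is preserved because $\theta$ is skew-Hermitian. Holomorphy follows from $D^{0,1}=\overline{\partial}$, globalization from trivial holonomy on simply connected $M$, and the converse from uniqueness of the Chern connection.

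The paper gives no proof of this lemma and simply quotes Boothby's Theorems 1--3. Your argument is essentially the standard one behind those theorems. Holomorphy can equivalently be checked on the dual coframe: with $\theta=0$, \eqref{struct1} gives $d\varphi^i=\tau^i$ of type $(2,0)$, so $\overline{\partial}\varphi^i=0$.
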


Consider any Hermitian manifold $(M, g)$ with zero Chern curvature. We pick a local unitary frame 
$\{e_i\}$ and the dual frame $\{\varphi^i\}$, so that the corresponding Hermitian metric $g$ can be expressed as
\[
\omega=\sqrt{-1}(\sum_{i=1}^n \varphi^i \wedge \overline{\varphi}^i).
\]
Now we consider the norm of Chern torsion tensor $\Psi=\sum |T_{jk}^i|^2$. We note that $T_{ij, \overline{l}}^k=0$ by Lemma \ref{torsiond}. Together with (\ref{cov_commute2}), we may solve
\[
\Box_g \Psi=\sum |T_{jk, p}^i|^2 \geq 0.
\]
Now we assume that $M$ is \emph{compact}. In view of Lemma \ref{Lapcompre}, the strong maximum principle for second elliptic operators applies to $\Box_g$. We conclude that $\Psi$ is constant and $T_{jk, p}^i=0$ on $M$. It follows that $T_{ij}^k$ is locally constant under the frame $\{e_i\}$. Then the structure equation (\ref{struct1}) reduces to $[e_i, e_j]=-2T_{ij}^k e_k$. Moreover, (\ref{bianchi_1}) is reduced to the Jacobi identity 
\begin{equation}\label{jacobi}
\sum_{m} T_{jl}^{m} T_{im}^{k}+T_{li}^{m} T_{jm}^{k}+T_{ij}^{m} T_{lm}^{k}=0.     
\end{equation}
Let $(\widetilde{M},\widetilde{g})$ denote the universal cover of $(M,g)$. The local frame $\{e_i\}$, together with its dual coframe $\{\varphi^i\}$, lifts to a global parallel unitary frame $\{\widetilde{e}_i\}$ and dual coframe $\{\widetilde{\varphi}^i\}$ on $(\widetilde{M},\widetilde{g})$. According to \eqref{jacobi}, $H^0(\widetilde{M}, T^{1, 0}\widetilde{M})=\operatorname{span}\{\widetilde{e}_1, \ldots, \widetilde{e}_n\}$ is isomorphic to a complex Lie algebra $\mathfrak{g}$. It follows from \cite[pp.~188-192]{Cartan1937} that $(\widetilde{M}, \widetilde{g})$ admits a complex Lie group structure such that $\widetilde{g}$ becomes a left-invariant metric.

\begin{remark}\label{Cartan_add}
   Let $G$ be the simply connected complex Lie group with its Lie algebra $\mathfrak{g}$ and $\rho: \mathfrak{g} \rightarrow H^0(\widetilde{M}, T^{1, 0}\widetilde{M})$ the isomorphism in the above argument. Let $L_x: G \ni y  \to xy \in G$ denote the left translation by $x \in G$. Let $g_{G}$ be a left-invariant Hermitian metric on $G$ such that $\{(L_x)_{\ast}(E_i)\}$ is a unitary frame for any $x \in G$. Note that $g_G$ is complete, see \cite[p.~294]{Milnor1976} for example. Without using Cartan's result in \cite{Cartan1937}, we can prove that $(\widetilde{M}, \widetilde{g})$ is holomorphically isometric to $(G, g_{G})$ directly. There are two (essentially equivalent) approaches as follows. 
  \begin{enumerate}
      \item  The first method is to construct a holomorphic map from $G$ to $\widetilde{M}$ as an orbit of a holomorphic flow. Namely, choose any $x \in G$. Let \(\gamma:[0,1]\rightarrow G\)
      be a smooth path from the identity $e$ to $x$. Define $\xi_\gamma(t)=\bigl(L_{\gamma(t)^{-1}}\bigr)_{\ast} \gamma^{\prime}(t)$. Then we consider
      \[
      \frac{d}{dt} q_{\gamma}(t)=\rho(\xi_\gamma(t))(q_{\gamma}(t)),\ \  q_{\gamma}(0)=\widetilde{p} \in G.
      \]
      As $\rho(\xi_\gamma(t))$ has a bounded length for $t \in [0, 1]$ with respect to $\widetilde{g}$. So we may solve $q_{\gamma}(1)$ for a fixed point $\widetilde{p} \in G$. Moreover, $q_{\gamma}(1)$ is independent of the choice of $\gamma(t)$. So we have a map $F:G \ni x  \rightarrow q_{\gamma}(1) \in \widetilde{M}$. Next we show that $F$ is a holomorphic local isometry. In view of \cite[Theorem 5.4, p.~116]{Sakai}, \(F\) is a holomorphic covering map and, in particular, an isometry. 
      \item  The second one constructs an involutive distribution on the product manifold $(G\times \widetilde{M}, g_{G} \oplus \widetilde{g})$. Now we define a distribution $\mathcal{D} \subset T(G\times \widetilde{M})$ such that
      \[
      \mathcal{D}_{(x, q)}=\{\bigl((L_x)_{\ast}(X),\rho(X)(q)\bigr) \in T_x^{1,0} G \times T_{q}^{1,0} \widetilde{M} \ \ | \ \ X \in \mathfrak{g}\}.
      \]
      Observe that $\mathcal{D}$ is involutive. Therefore there exists a maximal integral manifold $L \subset G \times \widetilde{M}$ through a fixed point $(e, \widetilde{p})$.
      Let $\operatorname{pr}_1: L \to G$ and $\operatorname{pr}_2: L \to M$ be the projection map to each factor. 
      One can verify that $\operatorname{pr}_1: (L, \frac{1}{2} (g_{G} \oplus \widetilde{g})|_L) \to (G, g_{G})$ is a local isometry. We may check that $(L, \frac{1}{2} (g_{G} \oplus \widetilde{g})|_L)$ is complete, and hence $\operatorname{pr}_1$ is a covering map by \cite[Theorem 5.4 on p.~116]{Sakai} again. It follows that $\operatorname{pr}_1$ is a holomorphic isometry. Similarly, $\operatorname{pr}_2$ is also a holomorphic isometry. Therefore, $F= \operatorname{pr}_2 \circ (\operatorname{{pr}_1})^{-1}: G \to \widetilde{M}$ is the desired isometry. 
\end{enumerate}
\end{remark}

We summarize the above discussion in the following.

\begin{theorem}[Boothby {\cite[Theorem 4]{Boothby1958}}]\label{boothby_main}
The universal cover of any compact Hermitian manifold $(M, g)$ with flat Chern curvature is holomorphically isometric to a complex Lie group $G$ with a left invariant metric.
\end{theorem}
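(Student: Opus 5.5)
The plan is to follow the argument sketched just above: show that the torsion is parallel, identify the Chern-parallel frame with a Lie algebra, and then integrate this Lie algebra to a group using Remark \ref{Cartan_add}.

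\textbf{Step 1: reduce to parallel holomorphic frames.} By Lemma \ref{flat_para}, around every point of $M$ there is a local holomorphic unitary frame $\{e_i\}$ which is parallel for the Chern connection. On the universal cover $(\widetilde M,\widetilde g)$ this frame becomes global. In such a frame all connection forms vanish, so the structure equation \eqref{struct1} reduces to $d\varphi^k=T^k_{ij}\varphi^i\wedge\varphi^j$, that is, $[e_i,e_j]=-2T^k_{ij}e_k$. It therefore suffices to show that the functions $T^k_{ij}$ are constant.

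\textbf{Step 2: constancy of the torsion.} This is the heart of the argument.
\begin{itemize}
\item Since the curvature vanishes, Lemma \ref{torsiond} gives $T^k_{ij,\overline l}=0$, so the torsion components are holomorphic in the parallel frame.
\item Set $\Psi=\sum|T^i_{jk}|^2$. The commutation formula \eqref{cov_commute2} has zero curvature terms, so $\Box_g\Psi=\sum|T^i_{jk,p}|^2\ge 0$.
\item By Lemma \ref{Lapcompre}, $\Box_g=\tfrac12\Delta_g+(\text{first-order terms})$ acting on real functions, which is a second-order elliptic operator without zeroth-order term.
\item $M$ is compact, so $\Psi$ attains an interior maximum. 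Hopf's strong maximum principle then forces $\Psi$ to be constant on each component.
\item Consequently $\sum|T^i_{jk,p}|^2=\Box_g\Psi=0$, so $T^i_{jk,p}=0$ as well as $T^i_{jk,\overline p}=0$. In the parallel frame this says $T^k_{ij}$ is locally constant, and the constants lift to $\widetilde M$.
\end{itemize}

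\textbf{Step 3: the Lie algebra.} With constant torsion, the first Bianchi identity \eqref{bianchi_1}, with $\Theta=0$ and $\theta=0$, becomes $d\tau^k=0$. Differentiating $d\varphi^k=T^k_{ij}\varphi^i\wedge\varphi^j$ once more yields exactly the Jacobi identity \eqref{jacobi}. Hence $\mathfrak g=\operatorname{span}_{\mathbb C}\{\widetilde e_i\}$, with bracket $[e_i,e_j]=-2T^k_{ij}e_k$, is a complex Lie algebra of holomorphic vector fields on $\widetilde M$.

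\textbf{Step 4: integration to a group.} Let $G$ be the simply connected complex Lie group with Lie algebra $\mathfrak g$, and give it the left-invariant metric $g_G$ that makes the left translates of the $E_i$ unitary. This metric is complete. Following Remark \ref{Cartan_add}(2), consider on $G\times\widetilde M$ the distribution spanned by the pairs $((L_x)_*X,\rho(X))$ for $X\in\mathfrak g$. It is involutive because $\rho$ is a Lie algebra homomorphism, so it has a maximal integral leaf $L$ through $(\mathbf 1_G,\widetilde p)$.

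Both projections restricted to $L$ are local holomorphic isometries, after rescaling the metric by $\tfrac12$. Completeness of $L$ follows from completeness of $g_G$ together with completeness of $\widetilde g$, the latter holding because $M$ is compact. By \cite[Theorem 5.4]{Sakai}, each projection is then a Riemannian covering. Since $G$ and $\widetilde M$ are simply connected, both coverings are bijective. Therefore $F=\mathrm{pr}_2\circ\mathrm{pr}_1^{-1}$ is a holomorphic isometry $G\to\widetilde M$, and it carries $g_G$ to $\widetilde g$.

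\textbf{Main obstacle.} The step I expect to be hardest is Step 2, and it is the only place compactness is used. One has to check carefully that $\Box_g$ really falls under the strong maximum principle; Lemma \ref{Lapcompre} supplies exactly this. The remaining steps are formal. The secondary technical point is the completeness of the leaf $L$ in Step 4. I would handle it by lifting geodesics: $\mathrm{pr}_1$ is a local isometry onto the complete manifold $G$, so paths in $G$ can be lifted to $L$. To see that the lifts extend for all time, use that the $\widetilde M$-component of a lift solves an ODE driven by the vector fields $\rho(\xi)$. These fields have bounded length on the complete manifold $\widetilde M$, so their flows are complete and the lifts cannot escape.
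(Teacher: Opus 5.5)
Your proposal is correct and follows the paper's argument. The identity $\Box_g\Psi=\sum|T^i_{jk,p}|^2\ge 0$, combined with the strong maximum principle (via Lemma \ref{Lapcompre}), gives constant torsion and the Jacobi identity. Your integration step is exactly Remark \ref{Cartan_add}(2), used instead of citing Cartan, with the leaf's completeness supplied by the ODE-lifting argument of Remark \ref{Cartan_add}(1).

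One small inconsistency: you say compactness is used only in Step~2, but Step~4 also uses it to get completeness of $\widetilde{g}$ on $\widetilde{M}$.
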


\begin{comment}
On a noncompact Hermitian manifold $(M, g)$ with zero Chern curvature, the Chern torsion may not have a constant norm. Indeed, there are examples whose universal coverings are not isometric to complex Lie groups with left-invariant Hermitian structures. We refer to \cite[Example on p.~231]{Boothby1958} for an example on $\mathbb{C}^2 \setminus \{z_1z_2=0\}$ which is incomplete with respect to the Riemannian connection. Moreover, a complete example on $\mathbb{C}^2$ is constructed in \cite[p.~5767]{WYZ2020}.    
\end{comment}

\subsection{Complex Laplacian comparison on Hermitian manifolds}

\begin{comment}
 \begin{theorem}[Chen--Yang \cite{CY1984_2}]\label{CYschwarz}
    Let $(M, g)$ be a complete Hermitian manifold with a fixed point $p \in M$. Assume that its Chern torsion satisfies
    \[
    \lim_{q \rightarrow \infty} \frac{|T|(q)}{d_g(q, p)}=0.
    \]
    We assume either of the following conditions holds.
    \begin{enumerate}[label=(\arabic*)]
        \item  $H(g) \geq -k_1$ for some constant $k_1 \geq 0$ and  \[
    \lim_{q \rightarrow \infty} \frac{\inf_{U, V \in T_q^{1, 0} M} BI(q; U, V)}{d_g(q, p)^2}=0.
    \]
    \item  $Ric^{(2)}(g) \geq -k_1$ for some constant $k_1 \geq 0$. 
    \end{enumerate}
    Let $(N, h)$ be a Hermitian manifold with holomorphic sectional curvature $H(h) \leq -k_2$ for some constant $k_2>0$.
    Then any holomorphic map $f: M  \rightarrow N$ satisfies $f^{\ast} h \leq \frac{k_1}{k_2} g$.
\end{theorem}   
\end{comment}

We recall a complex Laplacian comparison result on Hermitian manifolds.

\begin{lemma}[Chen--Yang {\cite[Proposition 2, p.~636]{CY1984_1}}]\label{cLap_comp}
    Let $(M^n,g)$ be a complete Hermitian manifold and $d(x)=d_g(p, x)$ be the distance function from $p \in M$. Suppose the norm of its Chern torsion is bounded by $A_1$. Then at any point outside the cut locus of $p$, we have the following result.
    \begin{enumerate}[label=(\arabic*)]
    \item  If $BI(g) \geq -A_2$ for $A_2 \geq 0$, then for a local frame $\{e_i\}$,
    \begin{equation}
       D^{2}d(e_i, \overline{e}_j) \le (\frac{1}{d}+2(A_1+\sqrt{A_2})) g_{i \overline{j}}.     
    \end{equation}
    \item  If $Ric^{(2)}(g) \geq -A_3$ for $A_3 \geq 0$, then
    \begin{equation}
        \Box_g\, d \le \frac{n}{d} +2n(A_1+\sqrt{A_3}).
    \end{equation}
    \end{enumerate}
\end{lemma}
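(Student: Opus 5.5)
The plan is to obtain the estimate from the second variation of arc length along a minimal geodesic, and to write that variation using the Chern connection rather than the Levi-Civita connection. This way the only curvature that appears is Chern curvature, paired in the bisectional form, and the discrepancy between the two connections is a tensor controlled by $|T|\le A_1$.

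Fix $x$ outside the cut locus of $p$ and let $\gamma:[0,d]\to M$ be the unit-speed minimal geodesic from $p$ to $x$. Then $d$ is smooth near $x$. Write $\gamma'=\sqrt2\,\mathrm{Re}\,W$ with $W=\tfrac{1}{\sqrt2}(\gamma'-\sqrt{-1}J\gamma')\in T^{1,0}$. Fix a unit vector $X\in T^{1,0}_xM$ and let $E(t)$ be its Chern-parallel transport along $\gamma$ back to $p$. Because $D$ is Hermitian, $E$ stays unit length, of type $(1,0)$, and commutes with $J$ under transport. Set $U=\sqrt2\,\mathrm{Re}E$, so that $JU$ is also Chern-parallel.

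First I will express $D^2d(X,\overline X)$ through the Levi-Civita Hessian. With $\gamma_{LC}(Y,Z)=D_YZ-\nabla_YZ$, a tensor of norm $\le c|T|$, we have
\[
D^2d(X,\overline X)=\tfrac12\bigl(\nabla^2d(U,U)+\nabla^2d(JU,JU)\bigr)-(\gamma_{LC}(X,\overline X))d .
\]
The last term is bounded by a multiple of $A_1$, since $|\nabla d|=1$.

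Next I will bound each $\nabla^2 d(V,V)$ by the index form $I(\phi U,\phi U)$ and $I(\phi JU,\phi JU)$, where $\phi(0)=0$ and $\phi(d)=1$. This uses the standard fact that $\nabla^2 d(V,V)\le I(\tilde V,\tilde V)$ for any field $\tilde V$ along $\gamma$ with these endpoint values. Expanding $\nabla_{\gamma'}(\phi U)=\phi' U+\phi\,(\nabla_{\gamma'}-D_{\gamma'})U$ turns the index form into three pieces:
\begin{itemize}
\item the term $\int\phi'^2$;
\item cross terms $\int\phi\phi'\,\langle\cdot\rangle$, which are linear in $T$;
\item quadratic terms $\int\phi^2\,(|T|^2\text{-terms}-\text{Riemannian curvature})$.
\end{itemize}

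The key algebraic step, and the main obstacle, is to show that after summing the $U$ and $JU$ contributions the Riemannian curvature combines with the quadratic torsion terms into exactly $-\,\mathrm{Re}\,R(W,\overline W,X,\overline X)$, up to terms that can be absorbed. I will do this by writing the Levi-Civita curvature in terms of the Chern curvature, $T$, and $DT$ (as in the comparison formulas of \cite{YZ2018}). The $DT$ terms should either cancel in the $J$-averaged sum or become a total $t$-derivative, which integrates against $\phi^2$ and is then handled by the cross-term estimate. Whatever torsion-squared remainder survives must come out nonnegative, or be boundable by $A_1\phi\phi'$. This is the identity I would verify first, in a unitary frame.

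Granting this, the bisectional bound $BI\ge -A_2$ gives
\[
D^2d(X,\overline X)\le \int_0^d\bigl(\phi'^2+2A_1\phi\phi'+A_2\phi^2\bigr)dt+cA_1 .
\]
Now choose $\phi(t)=\sinh(at)/\sinh(ad)$ with $a=\sqrt{A_2}$. Integrating by parts, $\int(\phi'^2+a^2\phi^2)=a\coth(ad)\le \tfrac1d+a$. Also $2A_1\int\phi\phi'=A_1$. Together with the bounded connection-difference term, and after tracking the constants so that they come to $2A_1$, this yields (1) in the form $\bigl(\tfrac1d+2(A_1+\sqrt{A_2})\bigr)g_{i\overline j}$.

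For (2), sum the estimate over a unitary frame $X=e_1,\dots,e_n$ transported along $\gamma$, using the same $\phi$ but with $a=\sqrt{A_3}$. The curvature terms sum to $\sum_k R(e_k,\overline e_k,W,\overline W)=Ric^{(2)}(W,\overline W)\ge -A_3$. Here I need to check that the slot ordering in the identity above places $W$ in the second pair, matching the definition $Ric^{(2)}_{i\overline j}=g^{k\overline l}R_{k\overline l i\overline j}$. The $\phi'^2$ and torsion terms are each multiplied by $n$. Since $A_3$ now enters without the factor $n$, $\int(n\phi'^2+A_3\phi^2)$ is minimized by the same $\sinh$ choice with $a$ comparable to $\sqrt{A_3/n}$, and is bounded by $n\bigl(\tfrac1d+\sqrt{A_3}\bigr)$. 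This gives $\Box_g d\le \tfrac nd+2n(A_1+\sqrt{A_3})$.
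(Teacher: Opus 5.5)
Your overall strategy is sound: second variation along the minimal geodesic, averaging over $U$ and $JU$, then a $\sinh$ comparison function. Routing it through the Levi-Civita index form is also legitimate, since for the variation $\exp_{\gamma(t)}(sV(t))$ it computes the same $L''(0)$ as the second variation written with the Chern connection. The paper itself gives no proof and simply quotes Chen--Yang.

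The gap is in the step you call the main obstacle: the mechanism you propose for it cannot work. After averaging over $V=\phi U$ and $V=\phi JU$, the Chern-curvature term of the second variation is $-\phi^2\,\mathrm{Re}\,R(X,\overline W,W,\overline X)$. This is not a bisectional curvature when $g$ is not K\"ahler. The derivative of torsion that survives is
\[
\phi^2\,\mathrm{Re}\,g\bigl((D_{\overline X}\tau)(X,W),\overline W\bigr),
\]
a $\overline\partial$-derivative in the direction $\overline X$, transverse to $\gamma$. It neither cancels under the $J$-average nor is a total $t$-derivative.

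The missing ingredient is the first Bianchi identity, Lemma \ref{torsiond}. The relation $2T^k_{ij,\overline l}=R_{j\overline l i\overline k}-R_{i\overline l j\overline k}$ turns this term into $\phi^2\,\mathrm{Re}\bigl(R(W,\overline X,X,\overline W)-R(X,\overline X,W,\overline W)\bigr)$. Since $\overline{R(X,\overline W,W,\overline X)}=R(W,\overline X,X,\overline W)$, the total is exactly $-\phi^2R(X,\overline X,W,\overline W)$. This is cleanest if you write the second variation directly with the Chern connection and $U$ Chern-parallel: then $D_{\gamma'}V=\phi'U$, and the only torsion derivative is $\langle (D_V\tau)(V,\gamma'),\gamma'\rangle$.

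The slot order also matters. $X$ sits in the first pair, so tracing over a unitary frame gives $Ric^{(2)}(W,\overline W)$, as (2) requires. Your stated $-\mathrm{Re}\,R(W,\overline W,X,\overline X)$ would trace to $Ric^{(1)}(W,\overline W)$, and (2) would not follow from its hypothesis.

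Your treatment of the torsion-squared terms is also incorrect as stated. They do not have a favorable sign: in the Chern form of the second variation, the surviving one is $\phi^2\bigl(|\tau(U,\gamma')|^2+|\tau(JU,\gamma')|^2\bigr)\ge 0$, coming from $|D_{\gamma'}V+\tau(V,\gamma')|^2$. Nor can a term of size $A_1^2\phi^2$ be bounded by $A_1\phi\phi'$ uniformly in $d$, because $\phi/\phi'=\tanh(at)/a$ is of order $t$ when $a$ is small.

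The term has to go into the potential. Take $\phi=\sinh(at)/\sinh(ad)$ with $a=\sqrt{A_2+cA_1^2}\le\sqrt{A_2}+\sqrt{c}\,A_1$ in (1), and $a^2=A_3/n+cA_1^2$ in (2). This still yields bounds of the stated shape $\frac1d+C(A_1+\sqrt{A_2})$ and $\frac nd+Cn(A_1+\sqrt{A_3})$, after which the constants can be tracked.
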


\begin{remark}
     Since the proof of Lemma \ref{cLap_comp} is local, i.e., the same conclusions hold on a geodesic ball, $A_1,A_2$ and $A_3$ can be taken as increasing functions on the radii of geodesic balls. Analogous estimates also hold on almost Hermitian manifolds; see \cite[Theorem 4.2]{Tosatti} and \cite[Theorem 1.1]{Yu2018} for further details.
\end{remark}

\begin{comment}
Consider a complete simply connected Hermitian manifold with zero Chern curvature. By Lemmas \ref{torsiond} and \ref{flat_para}, we choose a global holomorphic unitary frame $\{e\}$ so that each component $T_{ij}^k$ is holomorphic on $M$. As a consequence of Theorem \ref{CYschwarz}. $T_{ij}^k$ is constant if it is bounded. By Lemma \ref{flat_para} and Remark \ref{Cartan_add}, we obtain the following immediate generalization of Theorem \ref{boothby_main}.

\begin{proposition}\label{torsion_bd}
Any simply connected complete Hermitian manifold $(M, g)$ with zero Chern curvature and bounded torsion is holomorphically isometric to a complex Lie group with a left-invariant metric. 
\end{proposition}    
\end{comment}

\section{Liouville theorems and applications}\label{Sec_3}

In this section, we prove Liouville-type theorems for holomorphic functions and holomorphic mappings from complete Hermitian manifolds with nonnegative second Ricci curvature. The main result is Theorem \ref{grad}, whose proof relies on Lemma \ref{gene}; together, they provide a mapping analogue of Proposition \ref{grad_intro}.

\subsection{Basic facts on holomorphic maps between Hermitian manifolds}\label{mapping_term}

Let $f: M \rightarrow N$ be a smooth map between two Hermitian manifolds $(M, g)$ and $(N, h)$. We consider $df$ as a section of $T^{\ast} M \otimes f^{-1}TN$. In terms of local coordinates, we write
\[
df=\frac{\partial f^{\alpha}}{\partial x^{i}} dx^i \otimes \frac{\partial}{\partial y^{\alpha}}.
\]
Now we introduce a natural connection $D$ on $T^{\ast} M \otimes f^{\ast}TN$.
\begin{equation}
    D\,df(X, Y)=D_X df(Y) \coloneqq D^{N}_{df(X)} (df(Y))-df(D^{M}_X Y).  \label{ind_connect}
\end{equation} Here $D^M$ and $D^N$ denote the Chern connections on $M$ and $N$ respectively. It follows that $D\,df$ is a section of $T^{\ast}M \otimes T^{\ast} M \otimes f^{\ast}TN$,  hence an analog of the second fundamental form when $f$ is an isometric immersion between Riemannian manifolds. After complexification, $D\,df$ can be viewed as a section of $(T^{\ast}M \otimes \mathbb{C}) \otimes (T^{\ast}M \otimes \mathbb{C}) \otimes f^{\ast}TN$. Then it admits the following decomposition
\[
D\,df=(D\,df)^{(2, 0)}+(D\,df)^{(1, 1)}+(D\,df)^{(0, 2)}.
\]
Jost--Yau \cite{JY1993} introduced the notion of a \textit{Hermitian harmonic map} by requiring
\(
\operatorname{tr}_g\bigl((Ddf)^{(1,1)}\bigr)=0.
\) Motivated by their definition, we call $f$ a \textit{Hermitian totally geodesic map} if $D\,df=0$. We refer to \cite{JY1993} for applications of Hermitian harmonic maps on the rigidity problems of complex manifolds. Note that any holomorphic map between Hermitian manifolds is Hermitian harmonic.

Next, we recall the Chern--Lu inequality (\cite{Chern1968} and \cite{Lu1968}) on holomorphic mappings between Hermitian manifolds. We follow the exposition in \cite[Lemma 7.22, p.~183]{Zheng_b} to state the result in terms of local holomorphic coordinates.

\begin{lemma}[The Chern--Lu inequality]
    Let $f : M \to N$ be a holomorphic map between two Hermitian manifolds $(M, g)$ and $(N, h)$. Let $R^M$ and $R^N$ be the Chern curvature tensors of $M$ and $N$ respectively. Let $V \in T_p^{1, 0} M$, and $(U, z^i)$ and $(V, w^{\alpha})$ be the corresponding local holomorphic coordinates on $M$ and $N$. We write 
    \[
     f_i^\alpha=\frac{\partial w^{\alpha}(f(z))}{\partial z^i},\ \ f_{ik}^\alpha=\frac{\partial^2 w^{\alpha}(f(z))}{\partial z^i \partial z^k},\ \ 
     \Psi_{ik}^{\alpha}=g^{j\overline{p}}g_{i\overline{p}, k} f_j^{\alpha},\ \text{and}\ \Phi_{ik}^{\alpha}=f_i^{\beta} f_k^{\gamma} h^{\alpha \overline{\eta}} h_{\beta\overline{\eta}, \gamma}.
    \]
    Let $u=\operatorname{tr}_g f^*h=g^{i\overline{j}} h_{\alpha \overline{\beta}} f_i^{\alpha} \overline{f_j^{\beta}}$ be the energy density of $f$. Then we have 
    \begin{align}
    <\partial \overline{\partial} u, V \wedge \overline{V}>=&
    R^M(V, \overline{V}, g^{p\overline{j}} \overline{f_j^\beta} \frac{\partial}{\partial z^p}, \overline{g^{q\overline{i}} \overline{f_i^{\alpha}} \frac{\partial}{\partial z^q}} ) h_{\alpha\overline{\beta}} \label{chernlu1} \\
    &-R^N(\partial f(V), \overline{\partial f(V)}, \partial f(\frac{\partial}{\partial z^i}), \overline{\partial f(\frac{\partial}{\partial z^j})})g^{i\overline{j}} \nonumber\\
    &+\bigl|f_{ik}^{\alpha}V^k-\Psi_{ik}^{\alpha}V^k+\Phi_{ik}^{\alpha}V^k\bigr|^2.  \nonumber
    \end{align}
    Note that the last term on the right-hand side of \eqref{chernlu1} is exactly 
    $|D\,df(V, \cdot)|^2$ where $D\,df$ is defined in \eqref{ind_connect}. As a result, we have
    \begin{align}
    \Box u=&
    Ric_M^{(2)}(g^{p\overline{j}} \overline{f_j^\beta} \frac{\partial}{\partial z^p}, \overline{g^{q\overline{i}} \overline{f_i^{\alpha}} \frac{\partial}{\partial z^q}} ) h_{\alpha\overline{\beta}}
    \label{chernlu2}  \\
    &-R^N(\partial f(\frac{\partial}{\partial z^k}), \overline{\partial f(\frac{\partial}{\partial z^l})}, \partial f(\frac{\partial}{\partial z^i}), \overline{\partial f(\frac{\partial}{\partial z^j})})g^{k\overline{l}} g^{i\overline{j}}+|D\,df(\cdot, \cdot)|^2.  \nonumber
    \end{align}      
\end{lemma}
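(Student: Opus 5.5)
The plan is to compute in local holomorphic coordinates at a point $p\in M$ and expand $u=g^{i\overline{j}}h_{\alpha\overline{\beta}}f_i^\alpha\overline{f_j^\beta}$ directly. To simplify the bookkeeping, I would choose normal-type coordinates on both sides: holomorphic coordinates $z^i$ at $p$ with $g_{i\overline{j}}(p)=\delta_{ij}$, and $w^\alpha$ at $f(p)$ with $h_{\alpha\overline{\beta}}(f(p))=\delta_{\alpha\beta}$. For a Hermitian, non-Kähler metric we cannot also kill the first derivatives. We can only arrange $dg$ to be "as symmetric as allowed", so the terms $g_{i\overline{p},k}$ and $h_{\beta\overline{\eta},\gamma}$ must be carried along. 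These are exactly the tensors $\Psi$ and $\Phi$ appearing in the statement. Since the expression is tensorial, it is legitimate to simply keep all first-order terms in general coordinates.

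\textbf{Step 1.} Compute $\partial_k u$. Since $f$ is holomorphic, $\partial_k\overline{f_j^\beta}=0$, so
\[
\partial_k u=(\partial_k g^{i\overline{j}})h_{\alpha\overline{\beta}}f_i^\alpha\overline{f_j^\beta}+g^{i\overline{j}}h_{\alpha\overline{\beta},\gamma}f_k^\gamma f_i^\alpha\overline{f_j^\beta}+g^{i\overline{j}}h_{\alpha\overline{\beta}}f_{ik}^\alpha\overline{f_j^\beta}.
\]
Using $\partial_k g^{i\overline{j}}=-g^{i\overline{q}}g_{p\overline{q},k}g^{p\overline{j}}$, this combines to
\[
\partial_k u=g^{i\overline{j}}h_{\alpha\overline{\beta}}\bigl(f_{ik}^\alpha-\Psi_{ik}^\alpha+\Phi_{ik}^\alpha\bigr)\overline{f_j^\beta},
\]
after suitably relabelling indices. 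The combination $A_{ik}^\alpha=f_{ik}^\alpha-\Psi_{ik}^\alpha+\Phi_{ik}^\alpha$ is precisely $D\,df(\partial_k,\partial_i)$ written with Chern Christoffel symbols $\Gamma=g^{-1}\partial g$ on $M$ and $h^{-1}\partial h$ on $N$. This is consistent with \eqref{ind_connect}.

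\textbf{Step 2.} Apply $\overline{\partial}_l$ to $\partial_k u$ and contract with $V^k\overline{V^l}$. The term $A\cdot\overline{\partial_l\overline{f_j}}$-type contributions give $g^{i\overline{j}}h_{\alpha\overline{\beta}}A_{ik}^\alpha\overline{A_{jl}^\beta}$, i.e. $|A(V,\cdot)|^2$. The remaining terms are $\overline{\partial}_l$ hitting $g^{i\overline{j}}$, $h_{\alpha\overline{\beta}}$, $\Psi$ and $\Phi$. These assemble into second derivatives $-\partial_{\overline l}(g^{-1}\partial_k g)$ and $-\partial_{\overline l}(h^{-1}\partial h)$, which are the Chern curvatures $R^M_{k\overline{l}}$ and $R^N$. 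The latter is pulled back via $f_k^\gamma\overline{f_l^\delta}$. The signs are fixed by $R_{k\overline{l}i}^{\ p}=-\partial_{\overline l}\Gamma_{ki}^p$ in the convention of \eqref{cherncurv1}. Checking that the first-order cross terms exactly complete the square is the main obstacle: it requires the identity $\overline{\partial}_l(h_{\alpha\overline\beta}\overline{f_j^\beta})$ to equal $h_{\alpha\overline\beta}\overline{A_{jl}^\beta}$ plus the Christoffel terms that cancel. I would verify this carefully at the point using $h_{\alpha\overline{\beta}}(f(p))=\delta$ and $g(p)=\delta$ to reduce the clutter.

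\textbf{Step 3.} The identification of the last term with $|D\,df(V,\cdot)|^2$ follows from Step 1's observation that $A$ is the $(2,0)$ part of $D\,df$; the $(1,1)$ part vanishes by holomorphicity. Tracing \eqref{chernlu1} over an orthonormal $V$ with respect to $g$ then gives \eqref{chernlu2}. In this trace, the $R^M$ term is contracted over its first two slots, which is exactly $Ric^{(2)}_M$ by \eqref{riccicurv}, and the $\Box u$ on the left side is the trace of $\sqrt{-1}\partial\overline\partial u$ as shown in the previous subsection.
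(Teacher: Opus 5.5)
Your direct coordinate computation is correct, and it is the standard argument behind the coordinate form the paper quotes; the paper gives no proof of its own and cites Zheng's book, whose formulation in terms of $\Psi$ and $\Phi$ comes from exactly this computation. One clarification for Step 2: the $\overline{\partial}_l$-derivatives of the prefactors $g^{i\overline{j}}$ and $h_{\alpha\overline{\beta}}$ contribute no curvature; instead they supply the $-\overline{\Psi_{jl}^\beta}$ and $+\overline{\Phi_{jl}^\beta}$ that turn $\overline{f_{jl}^\beta}$ into $\overline{A_{jl}^\beta}$, while the curvature terms come only from $\overline{\partial}_l\Psi_{ik}^\alpha=-R^M{}_{k\overline{l}i}{}^{p}f_p^\alpha$ and $\overline{\partial}_l\Phi_{ik}^\alpha=-R^N{}_{\gamma\overline{\delta}\epsilon}{}^{\alpha}f_i^\epsilon f_k^\gamma\overline{f_l^\delta}$.
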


\subsection{On a class of rotationally symmetric Hermitian metrics on \texorpdfstring{$\mathbb{C}^m$}{C^m}}  \label{sec_rota}

In this subsection, we construct certain rotationally symmetric Hermitian metrics on the complex Euclidean space, with prescribed curvature properties. These metrics will serve as models for our application of the Schwarz lemma; see the proof of Theorem \ref{grad}.

Let $h$ be a complete rotationally symmetric metric on $\mathbb{C}$ with nonpositive Gaussian curvature. We denote by $r$ and $s$ the Euclidean distance and the $h$-geodesic distance from the origin $O$, respectively. Then we may express
\begin{equation}
    h=\xi(r) (dr^2 +r^2 d\theta^2)=ds^2 +(f(s))^2 d\theta^2, \ \text{where}\ \xi(r)=(s'(r))^2,\ \text{and}\ r s'(r) =f(s).\label{rs}
\end{equation}
Note that $f$ satisfies the Jacobi equation
\begin{equation}
    f''(s) +K(s)f(s)=0, \ \ f(0)=0, \ \ f'(0)=1.  \label{Jaco_eqn}
\end{equation}
Here, $K(s)$ is the Gauss curvature of $h$. Given $K(s) \in C^{\infty}[0, \infty)$, we may solve $f(s)$ from \eqref{Jaco_eqn}. Then we may determine the conformal factor $\xi$ by the equation in \eqref{rs} up to any prescribed constant $s_0>0$. That is, for any $s_0>0$, we define
$r=\exp\left(\int_{s_0}^{s}\frac{1}{f(\tau)}\,d\tau\right)$. We then solve for $s=s(r)$ with the normalization $s(1)=s_0$, and set $\xi(r)=s'(r)$.

\begin{lemma}\label{plane}
   For any $\varepsilon>0$, there exist constants $A=A(\varepsilon)>0$ which is sufficiently small, and $C=C(\varepsilon)>0$ such that $h$ is a complete metric on $\mathbb{C}$ of the form \eqref{rs}, satisfying
    \begin{equation}
        K(s)=-\frac{A}{(1+s)^{2+\varepsilon}}, \ \ \  \textit{and} \ \ \
        s \le Cr^{1+\varepsilon}\ \text{for some}\ C>0. \label{distance1}
    \end{equation}
\end{lemma}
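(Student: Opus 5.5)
Choose the curvature explicitly: $K(s)=-A(1+s)^{-2-\varepsilon}$ with $A>0$ small, to be fixed below. Let $f$ solve the Jacobi equation \eqref{Jaco_eqn}. Since $K\le 0$, we have $f''=-Kf\ge 0$ as long as $f>0$. Starting from $f(0)=0$, $f'(0)=1$, this gives $f'\ge 1$ and $f(s)\ge s$ for all $s>0$. Thus $f$ is positive on $(0,\infty)$, and $h=ds^2+f(s)^2d\theta^2$ is a smooth rotationally symmetric metric near the origin. It is smooth there because $K$ is smooth, which makes $f$ smooth, and the condition $f'(0)=1$ together with the standard parity argument handles the origin. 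The metric is complete since $s$ ranges over $[0,\infty)$.

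Next, bound $f$ from above. Integrating $f''=A(1+s)^{-2-\varepsilon}f$ gives
\[
f'(s)=1+A\int_0^s (1+\tau)^{-2-\varepsilon}f(\tau)\,d\tau.
\]
A continuity (bootstrap) argument shows $f(s)\le(1+\varepsilon)(1+s)$ for small $A$. Indeed, if this holds on $[0,s]$, then
\[
f'(s)\le 1+A(1+\varepsilon)\int_0^\infty (1+\tau)^{-1-\varepsilon}\,d\tau=1+\frac{A(1+\varepsilon)}{\varepsilon}.
\]
Choosing $A(\varepsilon)$ so that $A(1+\varepsilon)/\varepsilon\le \varepsilon/2$, we get $f'\le 1+\varepsilon/2$, hence $f(s)\le(1+\varepsilon/2)s<(1+\varepsilon)(1+s)$, which closes the bootstrap. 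So $s\le f(s)\le(1+\varepsilon)(1+s)$.

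Then convert to the conformal coordinate. With $r=\exp\bigl(\int_{s_0}^s d\tau/f(\tau)\bigr)$, the lower bound $f\ge s$ shows $r\to 0$ as $s\to 0$, because $\int_0 d\tau/f$ diverges logarithmically. So the coordinate covers the whole plane, and $h$ is a conformal metric on all of $\mathbb{C}$. For $s\ge s_0$, the upper bound $f\le(1+\varepsilon)(1+s)$ gives
\[
\ln r\ge \frac{1}{1+\varepsilon}\ln\frac{1+s}{1+s_0}.
\]
Hence $1+s\le (1+s_0)r^{1+\varepsilon}$ for $r\ge 1$, while for $r\le 1$ we have $s\le s_0$. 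Combining the two ranges yields $s\le Cr^{1+\varepsilon}$ for large $r$. For small $r$ we only need $s$ bounded, so the constant can be adjusted to give the form $s\le C r^{1+\varepsilon}$ on $r\ge 1$, or $s\le C\max(1,r)^{1+\varepsilon}$ globally. The statement is presumably meant for large $r$.

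The main obstacle is the bootstrap bound on $f'$: its convergence uses the exponent $2+\varepsilon>2$. This is exactly why the distance growth picks up the factor $r^{1+\varepsilon}$ rather than $r$, and why $A$ must be small depending on $\varepsilon$.
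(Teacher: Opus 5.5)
Your argument is essentially the paper's. Both use the same curvature profile, the two-sided bound $s\le f(s)\le \eta s$ with $\eta$ close to $1$ once $A$ is small, and integration of $r\,s'(r)=f(s)$. The only difference is that you prove the upper bound on $f$ with a self-contained bootstrap, whereas the paper quotes \cite[Lemma~4.5]{GW79} with $\eta=e^{-\int_0^\infty sK(s)\,ds}$. Your remark that $s\le Cr^{1+\varepsilon}$ can only hold for $r$ bounded away from $0$ is also correct, since $s\sim cr$ near the origin. The paper's integration of $(\ln s)'\le \eta/r$ likewise only yields the bound for $r\ge 1$.

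Two slips need fixing.

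First, $r\to 0$ as $s\to 0$ requires $\int_0 d\tau/f(\tau)=\infty$. This follows from your upper bound $f(\tau)\le (1+\varepsilon/2)\tau$, or simply from $f(\tau)\sim\tau$. It does not follow from $f\ge s$, which only bounds that integral from above.

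Second, the parity argument does not apply here. It needs $K$ to extend to an even function of $s$, but $K'(0)=A(2+\varepsilon)\neq 0$. Hence $f''''(0)=-2K'(0)\neq 0$, so $f$ is not odd. Indeed, the curvature $-A(1+d_h(O,\cdot))^{-2-\varepsilon}$ is not $C^1$ at $O$, so $h$ cannot be $C^3$ in any chart there. It is still $C^2$, which is all the pointwise Chern--Lu computation in Lemma~\ref{gene} uses.

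The paper is silent on this point too. If you want a genuinely smooth model, take an even profile such as $K(s)=-A(1+s^2)^{-1-\varepsilon/2}$. It satisfies $K(s)\le -A(1+s)^{-2-\varepsilon}$, and your bootstrap goes through with only the constants changed.
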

\begin{proof}[Proof of Lemma \ref{plane}]
    
    For any $K(s) \in C^{\infty}[0, \infty)$, the initial value problem \eqref{Jaco_eqn} admits a smooth solution $f$ on $[0, \infty)$. As $K(s)=-\frac{A}{(1+s)^{2+\varepsilon}}$, we have $\int_0^{+\infty} -sK(s)\,ds < \infty$. It follows from \cite[Lemma 4.5 on p.~62]{GW79} that 
    \begin{equation*}
       s \leq f(s) \le \eta s, \ \ \ \textit{where} \ \ \eta =e^{-\int_0^{+\infty} sK(s)\,ds}.
    \end{equation*}
    We choose $A>0$ sufficiently small so that $\eta \leq 1+\varepsilon$. Note that \eqref{rs} implies that $(\ln s(r))' \le \frac{\eta}{r}$. After an integration we get $s \leq C r^{\eta} \leq C r^{1+\varepsilon}$ for some $C>0$. As $f(s) \geq s$ and $f'(s) >1$ for $s>0$, we observe that
    $\frac{df}{ds}=1+\frac{d(\frac{ds}{dr})}{ds} r \geq 1$. It follows that $\xi(r)$ is increasing with respect to $r>0$. Moreover, we note that
    \[
    (\ln s(r))^{\prime}=\frac{s'(r)}{s} =\frac{f(s)}{rs} \geq \frac{1}{r}. 
    \]
    We get $s(r) \geq s_0 r$ where $s_0=s(1)$. Therefore $\xi(0)=\lim_{r \rightarrow 0+} \frac{s}{r} \geq s_0$ and $\xi(r) \geq s_0$ for any $r>0$.

\end{proof}

Let $r$ (and $s$) denote the Euclidean (and geodesic) distance from the origin $O$ with respect to a complete Hermitian metric on $\mathbb{C}^m$. Then Lemma \ref{plane} can be generalized to higher dimensions. 

\begin{lemma}\label{planetocm}
Given any integer $m \geq 1$ and any $\varepsilon>0$, there exist two constants $C_1, C_2>0$ and a complete Hermitian metric $h$ on $\mathbb{C}^m$ such that
    \begin{equation}
        BI(h) \leq -\frac{C_2}{(1+s)^{2+\varepsilon}}, \ \ \  \textit{and} \ \ \
        s \le C_1 r^{1+\varepsilon}. \label{distance2}
    \end{equation} 
\end{lemma}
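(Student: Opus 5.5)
The plan is to take a product of copies of the one-dimensional model from Lemma \ref{plane}. Fix $\varepsilon>0$ and let $h_0=\xi(|w|)\,|dw|^2$ be the complete rotationally symmetric metric on $\mathbb{C}$ given by Lemma \ref{plane}. Its Gauss curvature is $K(s)=-A(1+s)^{-2-\varepsilon}$, and its geodesic distance satisfies $s\le C|w|^{1+\varepsilon}$. On $\mathbb{C}^m$ with coordinates $(w_1,\dots,w_m)$ I set $h=\sum_{\alpha=1}^m \xi(|w_\alpha|)\,|dw_\alpha|^2$.

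\textbf{Completeness and the distance bound.} The metric $h$ is K\"ahler, being a Riemannian product of complete surfaces, so it is complete. Its distance from the origin is $s=\bigl(\sum_\alpha s_\alpha^2\bigr)^{1/2}$, where $s_\alpha$ is the $h_0$-distance of $w_\alpha$ from $0$. This gives
\[
s\le \sum_\alpha s_\alpha \le C\sum_\alpha |w_\alpha|^{1+\varepsilon}\le C_1 r^{1+\varepsilon},
\]
which is the second inequality in \eqref{distance2}.

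\textbf{The curvature bound.} This is where I expect the main obstacle. Since $h$ is a product, its Chern curvature equals its K\"ahler curvature and is diagonal:
\[
R_{\alpha\bar\alpha\alpha\bar\alpha}=K(s_\alpha)\,h_{\alpha\bar\alpha}^2,
\]
and all other components vanish. Hence for unit vectors $U,V$,
\[
BI(U,V)=\sum_\alpha K(s_\alpha)\,|U^\alpha|^2|V^\alpha|^2
\]
in a unitary frame. This can be zero, for instance when $U$ and $V$ lie in different factors. So a plain product gives only $BI\le 0$, not the strict decay required.

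To fix this, I would replace $h$ by a genuinely non-product metric. The first choice I would try is a rotationally symmetric K\"ahler metric on $\mathbb{C}^m$, $\omega=\sqrt{-1}\partial\bar\partial\phi(|z|^2)$, chosen so that its radial profile mimics the one-dimensional model. For $U(m)$-invariant K\"ahler metrics, the curvature is governed by two functions of the radius: the curvature in the radial complex line, and the curvature in the orthogonal directions. Bisectional curvature is then negative precisely when both are negative, with $BI$ comparable to the smaller of the two. I would choose $\phi$ so that, in terms of $s$, both functions are at most $-c(1+s)^{-2-\varepsilon}$. This amounts to an ODE choice analogous to \eqref{Jaco_eqn}: prescribe the radial metric coefficient $f(s)$ with $s\le f(s)\le(1+\varepsilon)s$, and pick $A$ small as in Lemma \ref{plane}. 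The same integration $(\ln s)'\le(1+\varepsilon)/r$ then yields $s\le C_1r^{1+\varepsilon}$.

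If that computation proves messy, a fallback is available. The statement only requires a complete Hermitian metric, and in the intended application one only needs $BI(U,V)\le -C_2(1+s)^{-2-\varepsilon}$ along directions in the image of a map. Still, the honest route is the $U(m)$-invariant K\"ahler construction. Verifying the sign and the decay rate of its two curvature functions is the step I would expect to take the most work.
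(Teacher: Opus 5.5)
Your treatment of the product metric is correct, and you are right to abandon it. The problem is that the proposal stops exactly where the content of the lemma begins. It also rests on an inaccurate structural claim. Take a $U(m)$-invariant K\"ahler metric $\omega=\sqrt{-1}\partial\bar\partial P(t)$ with $t=|z|^2$, and put $a=P'(t)$ and $b=(ta)'$. At $(z_1,0,\dots,0)$, in a unitary frame with $e_1$ radial, the curvature has three independent components, not two:
\[
R_{1\bar 1 1\bar 1}=-\frac{1}{b}\Bigl(\frac{tb'}{b}\Bigr)',\qquad R_{1\bar 1 k\bar k}=-\frac{1}{b}\Bigl(\frac{ta'}{a}\Bigr)',\qquad R_{k\bar k k\bar k}=2R_{k\bar k l\bar l}=-\frac{2a'}{a^{2}}\quad (k\neq l,\ k,l\geq 2).
\]
The mixed component is $BI(e_1,e_k)$ itself. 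So "radial" and "orthogonal" curvature alone do not justify your description of $BI$. Moreover, once you fix the radial complex line through a Jacobi equation as in Lemma~\ref{plane}, $b$ is determined and $a=\frac{1}{t}\int_0^t b$ is forced. The other two components are therefore outputs, not choices. Their sign and their $(1+s)^{-2-\varepsilon}$ decay are exactly what must be proved, and you leave this undone. As written, the proposal does not establish the curvature bound in \eqref{distance2}.

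Your route can be completed. Negativity of $R_{1\bar11\bar1}$ means $\Lambda(u)=\log(tb)$ is strictly convex in $u=\log t$. So $tb'/b=\Lambda'-1$ increases from $0$, hence $b$ is increasing. Then $a'=(b-a)/t>0$, which gives $R_{k\bar kk\bar k}<0$. For the mixed term, note that $ta'/a=b/a-1$ and
\[
\frac{d}{du}\log\frac{b}{a}=\frac{\int_{-\infty}^{u}\bigl(\Lambda'(u)-\Lambda'(v)\bigr)e^{\Lambda(v)}\,dv}{\int_{-\infty}^{u}e^{\Lambda(v)}\,dv}>0 .
\]
This gives $R_{1\bar1k\bar k}<0$, and a quantitative version of the same averaging gives the decay.

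The paper avoids all of this by not insisting on K\"ahler. It uses the conformally flat Hermitian metric $\sqrt{-1}e^{2\varphi}\sum_i dz^i\wedge d\bar z^i$, whose Chern curvature is $R_{k\bar l i\bar j}=-2e^{-2\varphi}\varphi_{k\bar l}\delta_{ij}$. Then $BI(U,V)$ depends only on the Levi form of $\varphi$ evaluated at $U$, and your ``two functions'' picture is exactly right. The radial eigenvalue $\tau\varphi''+\varphi'$ is prescribed through the Jacobi equation for $\rho=re^{\varphi}$. The tangential eigenvalue $\varphi'=\frac{1}{\tau}\int_0^\tau(\sigma\varphi')'\,d\sigma$ is its average, so it is automatically positive with an explicit lower bound. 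Once completed, your approach would yield a K\"ahler model, which is somewhat stronger, at the cost of handling the mixed term. Your fallback, requiring negativity only along image directions, would not prove the lemma as stated.
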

    
\begin{proof}[Proof of Lemma \ref{planetocm}]
    We consider a rotationally symmetric Hermitian metric 
    \[
    \omega_h=\sqrt{-1}e^{2\varphi} (dz^1 \wedge d\overline{z^1}+\cdots+dz^n \wedge d\overline{z^n})
    \] where $\varphi(r) \in C^{\infty}[0, \infty)$ is to be determined. We recall a general formula for the Chern curvature under conformal changes; see, for example, \cite[Formula (36), p.~1215]{YZ2018}. Under the unitary frame ${e_i=e^{-\varphi}\frac{\partial}{\partial z^i}}$, we write $R_{k\overline{l}i\overline{j}}(h)=R(e_k, \overline{e_l}, e_i, \overline{e_j})$. Then we get
    \begin{equation}
        R_{k\overline{l}i\overline{j}}
        =-2e^{-2\varphi} \frac{\partial^2 \varphi}{\partial z^k \partial \overline{z^l}} \delta_{ij}.   \label{conf_curva1}
    \end{equation}
    Let $g_{\mathbb{S}^{2m-1}}$ denote the round metric with constant sectional curvature $1$ on $\mathbb{S}^{2m-1}$. As 
    \[
     h=e^{2\varphi}(dr^2+r^2g_{\mathbb{S}^{2m-1}})=ds^2+(\rho(s))^2g_{\mathbb{S}^{2m-1}},
    \] we have $s'(r)=e^{\varphi}$ and $\rho=re^\varphi$. Now we compute the Chern curvature of $h$ along a ray $p=(|z|, 0, \cdots, 0)$. Note that $e_1$ is the radial direction in this case. If we write $\tau=r^2$, then we may solve all nonzero curvature components from (\ref{conf_curva1}) as follows.
    \begin{align}
      R_{1\overline{1}1\overline{1}}=R_{1\overline{1}k\overline{k}}=-\frac{2(\tau \varphi^{\prime\prime}(\tau)+\varphi^{\prime}(\tau))}{e^{2\varphi}},\ \ \ \ R_{k\overline{k}p\overline{p}}=R_{k\overline{k}1\overline{1}}=-\frac{2\varphi^{\prime}(\tau)}{e^{2\varphi}},\ \ \text{where}\ 2 \leq k, p \leq n.  \label{conf_curv2}
    \end{align}
    By a change of variable from $s$ and $\tau$, it is direct to check that $R_{1\overline{1}1\overline{1}}$ also satisfies the Jacobi equation.
   \begin{equation}
    \rho''(s) +2R_{1\overline{1}1\overline{1}} \rho(s)=0, \ \ \rho(0)=0, \ \ \rho'(0)=1.  \label{Jaco_eqn2}
   \end{equation} Given $R_{1\overline{1}1\overline{1}}=-\frac{A}{1+s^{2+\varepsilon}}$ for $A>0$ small enough, we may solve $\rho(s)$ and obtain a complete Hermitian metric on $\mathbb{C}^m$. By \cite[Lemma 4.5 on p.~62]{GW79} again, we have $s \leq \rho(s) \leq \eta s$ with $\eta =e^{-\int_0^{+\infty} \frac{A\,s}{(1+s)^{2+\varepsilon}}\,ds}$. By the same argument as in Lemma \ref{plane}, we have $s \leq C_1 r^{1+\varepsilon}$ for some constant $C_1>0$. It remains to show that there exists some constant $C_2>0$ so that
   \[
    BI(h) \leq -\frac{C_2}{(1+s)^{2+\varepsilon}}.
   \]
   By \eqref{conf_curv2}, $R_{2\overline{2}1\overline{1}}$ equals $R_{1\overline{1}1\overline{1}}$ at the origin, hence already negative near the origin. Moreover, it is negative for any $\tau>0$ as $\rho'(s)=1+2\tau u'(\tau)>1$. As $R_{1\overline{1}1\overline{1}}=\frac{2(\tau \varphi^{\prime\prime}(\tau)+\varphi^{\prime}(\tau))}{e^{2\varphi}}=-\frac{A}{1+s^{2+\varepsilon}}$, we have
   \[
   2\tau \varphi^{\prime}(\tau)=\int_0^{\tau} \frac{A\,e^{2\varphi}}{(1+s)^{2+\varepsilon}}d\tau.
   \]
   Then we may find some constant $C_2>0$ so that
   \[
   -R_{2\overline{2}1\overline{1}}=\frac{1}{\tau e^{2\varphi}}  \int_0^r  \frac{A\,e^{2\varphi}}{(1+s)^{2+\varepsilon}}d\tau=\frac{1}{\rho^2}  \int_0^r  \frac{2A\,e^{\varphi}\sqrt{\tau}}{(1+s)^{2+\varepsilon}}ds \geq \frac{1}{\eta^2 s^2} \int_0^r  \frac{2A\,s}{(1+s)^{2+\varepsilon}}ds \geq \frac{C_2}{(1+s)^2}.
   \]
    To sum up, any bisectional curvature formed by two unitary vectors $U, V \in T^{1, 0} \mathbb{C}^m$ can be estimated by
    \[
    BI(U, V)=R_{11 k\overline{k}} U^1\overline{U^1} V^k\overline{V^k}+\sum_{p=2}^m R_{p \overline{p} k\overline{k}} U^p\overline{U^p} V^k\overline{V^k} \leq -\frac{C_2}{(1+s)^{2+\varepsilon}}.
    \]
\end{proof}

\subsection{Liouville theorems on holomorphic mappings}

Let $\mathcal{O}(M, N)$ denote the space of holomorphic mappings between two Hermitian manifolds $(M, g)$ and $(N, h)$. For any real number $d \geq 0$, let $r=d_g(p, \cdot)$ denote the geodesic distance from a fixed point $p \in M$\footnote{In Subsection \ref{sec_rota}, we have denoted by $r$ the Euclidean distance from the origin of $\mathbb{C}^n$. In the rest of Section \ref{Sec_3}, $r$ always denotes the geodesic distance from a fixed point on $(M, g)$.} and $d_h$ is the geodesic distance from a fixed point on $N$. We introduce
\begin{align}
\mathcal{O}_d(M, N)=\bigr\{ &f \in \mathcal{O}(M, N)\ \ |\ \ \text{there exists}\ C>0\ \label{O_d_mapping}\\
      &\text{such that}\ d_{h} (f(q)) \leq C(1+r(q))^{\beta}\ \text{for all}\ q \in M  \bigr\}.  \nonumber
\end{align}

We restate Proposition \ref{grad_intro} as follows.

\begin{theorem}\label{grad}
Let $(M^n,g)$ be a complete Hermitian manifold with $Ric^{(2)}(g) \geq 0$. Assume that there exist constants $\delta \geq 0$ and $C_1>0$ such that the torsion of $(M, g)$ satisfies $|T| \le C_1(1+r)^{\delta}$ on $M$. Let $h_0$ denote the standard Euclidean metric on $\mathbb{C}$. For any given holomorphic function $f$, we define $u_0=\operatorname{tr}_g f^*h_0$. Given any $f \in \mathcal{O}_{\widetilde{\delta}}(M, g)$ with $\widetilde{\delta} \geq 0$ and any $\lambda>2\widetilde{\delta}-1+\delta$, there exists a constant $C_2>0$, depending on $f$ and the geometry of $M$, such that
\begin{equation}
    u_0(q) \le C_2(1+r(q))^{\lambda},
    \qquad q\in M.
    \label{grade}
\end{equation} The same conclusion holds for holomorphic mappings in $\mathcal{O}_{\widetilde{\delta}}(M, \mathbb{C}^m)$ where $\mathbb{C}^m$ is endowed with the standard Euclidean metric.
\end{theorem}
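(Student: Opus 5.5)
The plan is a Schwarz-lemma argument: compose $f$ with a holomorphic map into a negatively curved model, then apply the maximum principle to the energy density, using the Chern--Lu inequality \eqref{chernlu2} together with the complex Laplacian comparison of Lemma \ref{cLap_comp}.

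\textbf{Step 1: the target.} Fix $\varepsilon>0$ small and let $(\mathbb{C}^m,h)$ be the complete Hermitian metric of Lemma \ref{planetocm}, with
\[
BI(h)\le -\frac{C_2}{(1+s)^{2+\varepsilon}},\qquad s\le C_1 r_E^{1+\varepsilon},
\]
where $r_E$ is the Euclidean radius. We view $f\in\mathcal{O}_{\widetilde\delta}(M,\mathbb{C}^m)$ as a holomorphic map into $(\mathbb{C}^m,h)$. Let $u=\operatorname{tr}_g f^*h$. Since $h=e^{2\varphi}h_0$ and $e^{\varphi}=s'(r_E)\ge s_0>0$ (as in Lemma \ref{plane}), we have $u\ge s_0^2 u_0$, so it suffices to bound $u$. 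Along $f$, the growth hypothesis gives $s(f(q))\le C(1+r(q))^{\widetilde\delta(1+\varepsilon)}$. Hence on $f(B_p(R))$ the target curvature satisfies
\[
BI(h)\le -k(R),\qquad k(R)=c\,R^{-\widetilde\delta(1+\varepsilon)(2+\varepsilon)}.
\]

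\textbf{Step 2: the differential inequality.} By \eqref{chernlu2}, $Ric^{(2)}(g)\ge0$ and the bisectional bound give
\[
\Box u\ge -R^N(\cdots)\ge k(R)\,u^2/n'
\]
on $B_p(R)$. (Bisectional curvature controls the full trace $\sum R^N(\partial f e_k,\overline{\partial f e_k},\partial f e_i,\overline{\partial f e_i})$ by $-k|{\partial f}|^4$ up to a dimensional constant, so no holomorphic sectional lower bound is needed.)

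\textbf{Step 3: cutoff and maximum principle.} Consider $\Phi=(R^2-r^2)^2u$ on $B_p(R)$ and take a maximum point $q_0$, handling the cut locus by Calabi's trick. At $q_0$ we have $\partial\Phi=0$ and $\Box\Phi\le0$. To estimate the terms $\Box r$ and $|\partial r|^2$, use Lemma \ref{cLap_comp}(2) with $A_1=C(1+R)^{\delta}$ and $A_3=0$ (the Remark allows radius-dependent constants), giving
\[
\Box r\le \frac{n}{r}+C R^{\delta}.
\]
Since $\Box r^2=2r\Box r+2|\partial r|^2\le C(1+R^{1+\delta})$, the standard Cheng--Yau-type computation yields
\[
k(R)\,\Phi(q_0)\le C\bigl(R^2+R^{2+1+\delta}\bigr)
\]
(after multiplying through by $(R^2-r^2)^2$; the gradient cross term contributes a term of order $R^2$). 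Hence
\[
\sup_{B_p(R/2)}u\le C\,\frac{R^{3+\delta}}{k(R)\,R^4}=C\,R^{\delta-1+\widetilde\delta(1+\varepsilon)(2+\varepsilon)}.
\]
Choosing $R=2(1+r(q))$ and letting $\varepsilon\to0$, the exponent tends to $2\widetilde\delta-1+\delta$, so any $\lambda>2\widetilde\delta-1+\delta$ is attained. This proves the bound for maps into $\mathbb{C}^m$, and $m=1$ gives \eqref{grade}.

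\textbf{Main obstacle.} The delicate part is Step 3: tracking exactly how the torsion growth enters $\Box r$, while making sure the cross term $\operatorname{Re}\langle\partial\Phi,\partial r^2\rangle/\Phi$ stays of order $R^2$ and carries no torsion factor. The other point needing care is that the curvature decay of the model target, rather than of $M$, is what produces the factor $R^{2\widetilde\delta}$; this is precisely why Lemma \ref{planetocm} was built with decay of order only $s^{-2-\varepsilon}$ and distance distortion $r_E^{1+\varepsilon}$.
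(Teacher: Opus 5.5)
Your argument is correct and reaches the same exponent, but by a different maximum-principle mechanism from the paper.

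\textbf{The paper's route.} The paper first proves the general Schwarz-type Lemma \ref{gene}: for targets with $BI(h)\le -C/(1+d_h)^{\alpha}$, the exponent is $\alpha\beta+\delta-1$. The proof is global and by contradiction. Assuming $\limsup u/(1+r)^{\lambda}=\infty$, it passes to the bounded function $v=1-(1+u(1+r)^{-\lambda})^{-a}$. It then maximizes the log-weighted $F_k=v/[\ln(2+r^2)]^{1/k}$, which gives an Omori--Yau type sequence $x_k\to\infty$ with $v(x_k)\to 1$. Finally it compares growth orders across the four terms of $\Box u$. Theorem \ref{grad} is read off using the model of Lemma \ref{plane}, with $\alpha=2+\varepsilon$ and $\beta=(1+\varepsilon)\widetilde\delta$.

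\textbf{Your route.} You freeze the target curvature at scale $R$ on $f(B_p(R))$ and run Yau's local cutoff $\Phi=(R^2-r^2)^2u$. At the maximum, the cross term is $2u|\partial\psi|^2/\psi=O(R^2)\,u$ and carries no torsion. The comparison $\Box r\le n/r+CR^{\delta}$ gives $k(R)\Phi(q_0)\le CR^{3+\delta}$, hence $\sup_{B_p(R/2)}u\le CR^{\delta-1}/k(R)$. This is exactly the paper's exponent $\alpha\beta+\delta-1$. Also, the bisectional bound yields $\Box u\ge k(R)u^2$ directly, with no dimensional loss.

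\textbf{What each buys.} Your route is more elementary and quantitative. It needs no contradiction hypothesis or case analysis, gives an explicit scale-by-scale estimate, and uses the growth of $f$, the bound on $|T|$, and $Ric^{(2)}\ge 0$ only on $B_p(R)$. The paper's log-weighted global argument pays off in the borderline case. It is reused in Corollary \ref{Liouville}(2) with $\delta=1$ and bounded $f$, where the $\ln(2+r^2)$ weight forces $u\equiv 0$. With $\delta=1$ and $k$ constant, your cutoff gives only $\sup_{B_p(R/2)}u\le C/k$, i.e. boundedness rather than vanishing.

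\textbf{Minor points.} Write $k(R)=c(1+R)^{-\widetilde\delta(1+\varepsilon)(2+\varepsilon)}$ so that small $R$ is covered. Fix $\varepsilon=\varepsilon(\lambda)$ rather than letting $\varepsilon\to 0$, since the constants blow up in that limit.
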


The main ingredient of the proof of Theorem \ref{grad} is a Schwarz-type lemma for holomorphic mappings into a Hermitian manifold with a suitable negative curvature condition. For convenience, the constant $C$ may change from line to line in what follows.

\begin{lemma}\label{gene}
   Let $(M,g)$ be a complete Hermitian manifold with $Ric^{(2)}(g) \geq 0$ and its torsion satisfies $|T| \le C_1(1+r)^{\delta}$ for some $\delta \geq 0$. Assume that either of the following holds
   \begin{enumerate}
       \item   $(N, h)$ is a Hermitian manifold with $BI(h) \le -\frac{C_2}{(1+d_h)^{\alpha}}$ for some $\alpha \geq 0$;
       \item   $(N, h)$ is a K\"ahler manifold with $H(h) \le -\frac{C_2}{(1+d_h)^{\alpha}}$ for some $\alpha \geq 0$.
   \end{enumerate}
   Given any $f \in \mathcal{O}_{\beta}(M, N)$, there exists a constant $C_3>0$ such that
   \begin{equation}
        u(q) \coloneqq \operatorname{tr}_g f^*h \le C_3\,(1+r(q))^{\alpha\beta +\delta-1},\ \ \forall\,q \in M.
    \end{equation} 
\end{lemma}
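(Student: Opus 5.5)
This is a localized Schwarz lemma in the style of Yau and Chen--Cheng--Lu. We apply a maximum principle on geodesic balls, using the Chern--Lu formula \eqref{chernlu2} for $u=\operatorname{tr}_g f^*h$ together with the complex Laplacian comparison of Lemma \ref{cLap_comp}.

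\emph{Differential inequality for $u$.} Since $Ric^{(2)}(g)\ge 0$, the first term on the right of \eqref{chernlu2} is nonnegative, and we drop $|D\,df|^2\ge 0$. In case (1), write $\partial f(\partial_k)$ in a frame diagonalizing $f^*h$ with respect to $g$; the bound $BI(h)\le -\kappa$, where $\kappa(q)=C_2(1+d_h(f(q)))^{-\alpha}$, gives $-R^N(\cdots)\ge \kappa\,(\sum_i\lambda_i)^2=\kappa u^2$. In case (2), $h$ is K\"ahler, so Royden's lemma gives $-R^N(\cdots)\ge \frac{\kappa}{2}u^2$ up to a dimensional factor. In both cases
\[
\Box_g u\ \ge\ c\,\kappa\,u^2 .
\]
Since $f\in\mathcal{O}_\beta(M,N)$, on the ball $B_p(2R)$ we have $d_h(f(q))\le C(1+2R)^\beta$. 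Hence $\kappa\ge c(1+R)^{-\alpha\beta}$ there, and $\Box_g u\ge c(1+R)^{-\alpha\beta}u^2$ on $B_p(2R)$.

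\emph{Cutoff argument.} Let $\phi=\psi(r/2R)$, where $\psi$ is a standard cutoff equal to $1$ on $[0,1/2]$, supported in $[0,1)$, with $|\psi'|^2/\psi\le C$ and $\psi''\ge -C$. Consider $\phi u$ at a maximum point $q_0$ in $B_p(2R)$. If $q_0$ lies in the cut locus of $p$, we use Calabi's trick to handle it. At $q_0$, $\partial(\phi u)=0$ and $\Box(\phi u)\le 0$, so
\[
0\ \ge\ \phi\,\Box u+u\,\Box\phi-2\,\frac{|\partial\phi|^2}{\phi}\,u .
\]
We bound the terms involving $\phi$ using $\psi'\le 0$, $|\partial r|\le 1$, and Lemma \ref{cLap_comp}(2) applied with $A_3=0$ and $A_1=C(1+2R)^\delta$ on the ball, as the Remark after Lemma \ref{cLap_comp} allows:
\[
\Box\phi\ \ge\ \frac{\psi'}{2R}\Bigl(\frac{n}{r}+2nC(1+R)^{\delta}\Bigr)-\frac{C}{R^2}.
\]
The term $\psi'/(2Rr)$ only matters where $r\ge R$, so there it is bounded by $C/R^2$. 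This yields
\[
-u\,\Box\phi+2\,\frac{|\partial\phi|^2}{\phi}\,u\ \le\ C\bigl(R^{-2}+R^{\delta-1}\bigr)u .
\]
Multiplying the inequality at $q_0$ by $\phi$ gives
\[
c(1+R)^{-\alpha\beta}(\phi u)^2\le C\bigl(R^{-2}+R^{\delta-1}\bigr)(\phi u),
\]
so $\sup_{B_p(R)}u\le C(1+R)^{\alpha\beta+\delta-1}$, using $R^{-2}\le R^{\delta-1}$ for $R\ge1$. For $q$ with $r(q)=R/2$ this is exactly the claimed bound.

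\emph{Main obstacle.} The delicate step is the curvature term. In case (1), we must check that the bisectional bound really yields $\kappa u^2$ rather than only a sum of squares $\kappa\sum\lambda_i^2$. Since $\sum\lambda_i^2\ge u^2/n$, even the weaker version gives $\kappa u^2/n$, which suffices. In case (2), we must invoke Royden's symmetrization correctly for a K\"ahler target. A secondary issue is making the $\phi\,\Box u$ term usable: we need $\Box_g u\ge c\kappa u^2$ pointwise at $q_0$, and this requires the lower bound on $\kappa$ over the whole ball. That lower bound is exactly where the growth hypothesis on $f$ enters.
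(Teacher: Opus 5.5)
Your argument is correct, but it is not the route the paper takes.

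\textbf{The paper's argument} is global. It supposes $\limsup u/(1+r)^{\lambda}=\infty$ and passes to the bounded function $v=1-(1+u(1+r)^{-\lambda})^{-a}$. It then picks maxima $x_k$ of $v/[\ln(2+r^2)]^{1/k}$, an Omori--Yau/Cheng--Yau point-picking, and expands $\Box u$ in terms of $v$ into four terms. A case analysis with $0<a<\tfrac12$ shows that each of these terms can dominate $C(1+r)^{-\alpha\beta}u^2$ along $\{x_k\}$ only if $\lambda<\alpha\beta+\delta-1$. The estimate then follows by contraposition.

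\textbf{Your argument} instead localizes on $B_p(2R)$ with the cutoff $\psi(r/2R)$. You use the growth of $f$ to bound $\kappa$ from below uniformly on the ball, and you close with a single quadratic inequality at the maximum of $\phi u$.

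\textbf{Comparison.} The ingredients are the same in both: Chern--Lu with $Ric^{(2)}\ge 0$, Royden's lemma in the K\"ahler case, the Chen--Yang comparison with $A_1\sim(1+2R)^{\delta}$ on the ball, and Calabi's trick at cut points. Your version buys several things:
\begin{itemize}
\item it avoids the auxiliary transform $v$, the four-way case analysis and the contradiction step;
\item it yields an explicit constant $C_3$ depending on $n$, $C_1$, $C_2$, $\alpha$, $\delta$ and the growth constant of $f$;
\item it makes immediately visible that $u\equiv 0$ when $\alpha\beta+\delta<1$.
\end{itemize}
The paper's global argument never has to control cutoff-derivative terms, but it pays for this with the case-by-case growth comparison along the sequence $x_k$.

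\textbf{A small remark.} In case (1) the bisectional bound gives $-R^N(\cdots)g^{k\bar l}g^{i\bar j}\ge\kappa u^2$ directly in any unitary frame, because $\sum_k|\partial f(e_k)|_h^2=u$. So the fallback to $\sum\lambda_i^2\ge u^2/n$ that you mention is unnecessary.
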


\begin{proof}[Proof of Lemma \ref{gene}]
    It suffices to show that if there exists some constant $\lambda$ such that  
    \begin{equation*}
        \limsup_{x\in M} \frac{u(x)}{(1+r)^{\lambda}} =\infty,
    \end{equation*}
    then $\lambda < \alpha\beta +\delta-1$. For some constant $a>0$ which is to be determined, we define 
    \begin{equation}
        v=1-(1+\frac{u}{(1+r)^{\lambda}})^{-a}.   \label{def_v}
    \end{equation}
    Then $0\le v <1$ and $\sup v=1$. Since $u=(1+r)^{\lambda} [(1-v)^{-1/a}-1]$, by a direct computation:
    \begin{align}
        \Box u=& 
        (1+r)^{\lambda-2}(\frac{1}{2}\lambda(\lambda-1) +\lambda (1+r) \Box r) [(1-v)^{-1/a}-1] \label{box1}   \\
        &+\frac{2 \lambda}{a} (1+r)^{\lambda-1}(1-v)^{-1/a -1} \langle \nabla v, \nabla r \rangle \nonumber \\
        &+ \frac{1+a}{a^2} (1+r)^{\lambda} (1-v)^{-1/a -2} \frac{1}{2}|\nabla v|^2  \nonumber \\
        &+\frac{1}{a} (1+r)^{\lambda} (1-v)^{-1/a -1} \Box v. \nonumber 
    \end{align}
    Thanks to Royden's lemma \cite{Royden1980}, in either case of $(N, h)$ we may apply the Chern-Lu inequality \eqref{chernlu2} and obtain
    \begin{equation}
        \Box u \ge \frac{C}{(1+r)^{\alpha \beta}}u^2. \label{laplace}
    \end{equation} 
    Therefore, for any given point on $M$, at least one of the four terms on the right hand side of \eqref{box1} must be bounded below by the right-hand side of \eqref{laplace}. In other words, given any sequence of points tend to infinity, one of the following inequalities necessarily holds along an infinite sequence of points on $M$. 
    \begin{align}
     & (1+r)^{\lambda-2}(\lambda(\lambda-1)\frac{1}{2} +\lambda (1+r) \Box r) ((1-v)^{-1/a}-1) \geq  \frac{C}{(1+r)^{\alpha \beta}}u^2.\label{box1a} \\
    &\frac{2\lambda}{a} (1+r)^{\lambda-1}(1-v)^{-1/a -1} \langle \nabla v, \nabla r \rangle \geq  \frac{C}{(1+r)^{\alpha \beta}}u^2. \label{box2a} \\
    &\frac{1+a}{2a^2} (1+r)^{\lambda} (1-v)^{-1/a -2} |\nabla v|^2 \geq  \frac{C}{(1+r)^{\alpha \beta}}u^2.
    \label{box3a} \\
    &\frac{1}{a} (1+r)^{\lambda} (1-v)^{-1/a -1} \Box v \geq  \frac{C}{(1+r)^{\alpha \beta}}u^2. \label{box4a}
    \end{align}
    In order to obtain the desired upper bound estimate of $\lambda$, we analyze (\ref{box1a}), (\ref{box2a}), (\ref{box3a}), and (\ref{box4a}) along a suitably chosen sequence $\{x_k\} \subset M$ which tends to infinity. The idea of point-picking originated in the classical works of Omori \cite{Omori} and Cheng-Yau \cite[Theorem 3] {ChengYau1975}. We consider
\begin{equation}
    F_{k}(x)=\frac{v(x)}{[\ln (2+r^2(x))]^{1/k}}, \ \text{where}\ k \in \mathbb{N}. \label{def_F_k}
\end{equation}
As $v$ is bounded, $F_k$ attains its maximum at some point $x_k \in M$. Hence
\begin{equation}
    \nabla F_k (x_k) =0,\ \ \ \  \Box F_k(x_k) \le 0. \label{max}
\end{equation}
We observe that
\begin{equation}
    \lim_{k \to +\infty} v(x_k)=\sup v.
\end{equation}
Otherwise there exists a constant $\epsilon>0$ so that $v(x_k) +\epsilon \le \sup v -\epsilon$ holds for all but finite terms in the sequence $\{x_k\}$. Take $y \in M$ with $v(y) >\sup v -\epsilon$. Then $v(x_l)+\epsilon < v(y)$ for $l$ sufficiently large. Note that
\begin{equation*}
    \lim_{l \to +\infty} \Big( \frac{\ln (2+r^2(y))}{\ln 2}\Big)^{\frac{1}{l}}=1.
\end{equation*}
It follows that for $l$ sufficiently large,
\begin{align*}
    F_l(y)&=\frac{v(y)}{(\ln (2+r^2(y)))^{1/l}}
    >\frac{v(x_l)+\epsilon}{(\ln (2+r^2(y)))^{1/l}}
    \\
    &>\frac{v(x_l)}{(\ln 2)^{1/l}} \ge \frac{v(x_l)}{[\ln (2+r^2(x_l))]^{1/l}}=F_l(x_l).
\end{align*}
That is a contradiction. Hence \eqref{max} follows and the sequence $\{x_k\}$ tends to infinity. It also follows from \eqref{max} that 
\begin{align}
    &\nabla v(x_k)=\frac{2v(x_k)r\nabla r}{k(2+r^2)\ln(2+r^2)}; \label{max1} \\
    &\Box v(x_k) \le \frac{C+2r\Box r}{k(2+r^2)\ln(2+r^2)}.  \label{max2}
\end{align}
If $x_k$ is not a cut point of $p\in M$, then we may apply Chen--Yang's complex Laplacian comparison theorem; see Lemma \ref{cLap_comp}. Thus we obtain
\begin{equation}
    \Box r \le \frac{n}{r}+C (1+r)^{\delta}.  \label{chenyanglap}
\end{equation}
Then we get from (\ref{max2}) that
\begin{equation}
    \Box v(x_k) \le \frac{C}{k (1+r)^{1-\delta} \ln(2+r^2)}.  \label{chenyanglap2}
\end{equation}

Now we study the inequalities (\ref{box1a}),(\ref{box2a}),(\ref{box3a}), and (\ref{box4a}) along the (sub)sequence $\{x_k\}$ case by case. Recall that we have $(1-v)^{-1/a} \sim \frac{u}{(1+r)^{\lambda}} \rightarrow +\infty$ along $\{x_k\}$. 

Case 1: Assume (\ref{box1a}) holds along $\{x_k\}$. We use (\ref{chenyanglap}) and obtain
\begin{align*}
   &(1+r)^{\lambda-2} (1+r)^{1+\delta} \frac{u}{(1+r)^{\lambda}}  \geq  \frac{C}{(1+r)^{\alpha \beta}}u^2; \\
   &(1+r)^{\alpha \beta +\delta -1 -\lambda} \geq C \frac{u}{(1+r)^{\lambda}}.
\end{align*}
We get $\lambda< \alpha \beta+\delta-1$ after comparing the order of growth.

Case 2: Assume (\ref{box2a}) holds along $\{x_k\}$ and $0<a<1$. We apply (\ref{max1}) and argue similarly as in Case 1. It follows that $\lambda< \alpha \beta-2$.

Case 3: Assume (\ref{box3a}) holds along $\{x_k\}$. We apply (\ref{max1}) in the following esimates. 
\begin{align*}
   &(1+r)^{\lambda} [\frac{u}{(1+r)^{\lambda}}]^{1+2a} \frac{r^2}{(2+r^2)[\ln(2+r^2)]^2}  \geq  \frac{C}{(1+r)^{\alpha \beta}}u^2; \\
   &\frac{(1+r)^{\alpha \beta -2 -\lambda}}{\ln (2+r^2)} \geq C [\frac{u}{(1+r)^{\lambda}}]^{1-2a}.
\end{align*}
If we assume $0<a<\frac{1}{2}$, we obtain $\lambda< \alpha \beta-2$.

Case 4: Assume (\ref{box4a}) holds along $\{x_k\}$. We apply (\ref{chenyanglap2}) and estimate
\begin{align*}
   &\frac{1}{a}  (1+r)^{\lambda} [\frac{u}{(1+r)^{\lambda}}]^{1+a} \Box v  \geq  \frac{C}{(1+r)^{\alpha \beta}}u^2; \\
   &\frac{(1+r)^{\alpha \beta +\delta -1 -\lambda}}{\ln (2+r^2)} \geq C [\frac{u}{(1+r)^{\lambda}}]^{1-a}.
\end{align*} If we assume $0<a<1$, then $\lambda< \alpha \beta+\delta-1$ holds.

Finally, we remark that (\ref{max1}), (\ref{max2}), (\ref{chenyanglap}), and (\ref{chenyanglap2}) still hold if $x_k$ lies on the cut locus of $p \in M$, thanks to Calabi’s trick \cite{Calabi1958} and \cite{ChengYau1975}. 

To sum up, we may pick any $0<a<\frac{1}{2}$ in (\ref{def_v}) and obtain $\lambda< \alpha \beta+\delta-1$.

\end{proof}

\begin{proof}[Proof of Theorem \ref{grad}]
To prove Theorem \ref{grad}, for any $\lambda>2\widetilde{\delta}-1+\delta$, there exists $\varepsilon>0$, such that
\begin{equation*}
     (2+\varepsilon)(1+\varepsilon) \widetilde {\delta} +\delta-1 \le \lambda.
\end{equation*} 
Let $h$ be the Hermitian metric on $\mathbb{C}$ in Lemma \ref{plane}. We write 
\(
u= \operatorname{tr}_g f^*h=u_0 \xi.
\)
Recall that $\xi$ is defined in (\ref{rs}). It follows from the proof of Lemma \ref{plane} that we may pick $\xi \geq 1$ everywhere on $\mathbb{C}$. So we get $u_0 \le u$ on $M$. Hence $(\ref{grade})$ follows from Lemma \ref{plane} and Lemma \ref{gene}. Similarly, the corresponding conclusion for holomorphic mappings from $M$ to $\mathbb{C}^m$ follows from Lemma \ref{gene} and Lemma \ref{planetocm}. Alternatively, we may also apply (\ref{grade}) on each component of the holomorphic mapping.
\end{proof}

As an application of Theorem \ref{grad}, we obtain the following Liouville-type results for holomorphic functions and holomorphic mappings. These results are motivated by Cheng's Liouville theorem for harmonic maps \cite{Cheng1980} and Chen--Yang's Schwarz lemma \cite{CY1984_2}.

\begin{corollary}\label{Liouville}
Let $(M,g)$ be a complete Hermitian manifold with $Ric^{(2)}(g) \geq 0$. Assume that its torsion satisfies $|T| \le C(1+r)^{\delta}$ for constants $\delta \geq 0$ and $C>0$.   
\begin{enumerate}[label=(\arabic*)]
    \item  If $0 \leq \delta<1$ and $\widetilde{\delta} < \frac{1-\delta}{2}$, then $\mathcal{O}_{\widetilde{\delta}}(M, g)=\{\text{constant}\}$. 
    \item  If $\delta=1$, any bounded holomorphic function on $M$ is constant.
\end{enumerate}
\end{corollary}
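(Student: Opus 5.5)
For part (1), I would apply Theorem \ref{grad} with an exponent $\lambda<0$, and then use a maximum principle for the energy density. Since $0\le \delta<1$ and $\widetilde{\delta}<\frac{1-\delta}{2}$, we have $2\widetilde{\delta}-1+\delta<0$, so we may fix $\lambda$ with $2\widetilde{\delta}-1+\delta<\lambda<0$. For $f\in\mathcal{O}_{\widetilde{\delta}}(M,g)$, Theorem \ref{grad} then gives
\[
u_0=|\nabla f|^2\le C_2(1+r)^{\lambda},
\]
so $u_0\to 0$ at infinity.

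Next, apply the Chern--Lu formula \eqref{chernlu2} to $f:(M,g)\to(\mathbb{C},h_0)$. The target curvature term vanishes, and $Ric^{(2)}(g)\ge 0$, so $\Box u_0\ge |D\,df|^2\ge 0$. By Lemma \ref{Lapcompre}, $\Box_g=\frac12\Delta_g+(\text{first-order terms})$ is an elliptic operator with no zeroth-order term, so the weak maximum principle holds on geodesic balls:
\[
\sup_{B_p(R)}u_0\le \sup_{\partial B_p(R)}u_0 .
\]
Letting $R\to\infty$ gives $u_0\equiv 0$. Hence $\partial f=0$, and $f$ is constant.

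For part (2), Theorem \ref{grad} with $\widetilde{\delta}=0$, $\delta=1$ only gives $u_0\le C(1+r)^{\lambda}$ for every $\lambda>0$, which is not enough. Instead I would run the Schwarz-lemma argument of Lemma \ref{gene} with a target of strictly negative curvature. Suppose $|f|<R$. Regard $f$ as a map into the disk $D_{2R}$ equipped with its Poincar\'e metric $h_P$. This metric is K\"ahler with $H(h_P)=-c_0<0$, so case (2) of Lemma \ref{gene} applies with $\alpha=0$. Since $f(M)\subset D_R$ is relatively compact in $D_{2R}$, $d_{h_P}\circ f$ is bounded. Therefore $u=\operatorname{tr}_g f^*h_P$ is bounded (this is exactly the conclusion of Lemma \ref{gene} with exponent $\alpha\beta+\delta-1=0$). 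Moreover, Royden's lemma together with \eqref{chernlu2} gives
\[
\Box u\ge c\,u^2
\]
for a constant $c>0$.

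It remains to show $\sup u=0$. Suppose $\sup u>0$. Use the point-picking function $F_k=u/[\ln(2+r^2)]^{1/k}$ exactly as in \eqref{def_F_k}, with $u$ in place of $v$. As in the proof of Lemma \ref{gene}, either $u$ attains its supremum at a point, or the maximum points $x_k$ of $F_k$ satisfy $u(x_k)\to\sup u$. At $x_k$ we use the analogues of \eqref{max1}--\eqref{max2}, the Chen--Yang comparison $\Box r\le \frac{n}{r}+C(1+r)$ from Lemma \ref{cLap_comp} with linearly growing torsion bound, and Calabi's trick at cut points. This yields
\[
c\,u(x_k)^2\le \Box u(x_k)\le \frac{C\,u(x_k)\,(1+r\,\Box r)}{k(2+r^2)\ln(2+r^2)}\le \frac{C\sup u}{k}.
\]
The right-hand side stays bounded uniformly in $r$ because the linear torsion bound makes $r\,\Box r\lesssim (1+r)^2$, which is absorbed by $2+r^2$. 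Letting $k\to\infty$ gives $(\sup u)^2\le 0$, a contradiction. If instead $u$ attains its supremum at an interior point, then $\Box u\le 0$ there, which forces $u=0$ at that point and hence $\sup u=0$. In either case $\sup u=0$, so $u\equiv 0$ and $f$ is constant.

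The main obstacle is the borderline case $\delta=1$. Here the growth exponent from Lemma \ref{gene} is exactly $0$, so one cannot read off decay. One must instead exploit the quadratic term $c\,u^2$ against the $1/k$ gain in the Omori--Yau-type point picking. The key check is that linear torsion growth keeps $r\,\Box r/(2+r^2)$ bounded, so the error term is of order $1/k$.
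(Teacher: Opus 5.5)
Your proposal is correct and follows essentially the same route as the paper. For (1), you combine the decay $u_0\le C(1+r)^{\lambda}$ with $\lambda<0$, which comes from Theorem \ref{grad}, with $\Box u_0\ge 0$ and the maximum principle. For (2), you use $\Box u\ge c\,u^2$, boundedness of $u$ from Lemma \ref{gene}, and point-picking with $u/[\ln(2+r^2)]^{1/k}$. Your explicit choice of the Poincar\'e disk as target and your tracking of the factor $u(x_k)$ in the analogue of \eqref{max2} just make precise what the paper's brief proof leaves implicit.
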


\begin{proof}[Proof of Corollary \ref{Liouville}]
    In Case (1), by Theorem \ref{grad} we may choose some $\epsilon>0$ sufficiently small such that
    \[
    u \leq \frac{C}{(1+r)^{1-\delta-2\widetilde{\delta}-\epsilon}}. 
    \] holds on $M$. Since $\Box u \geq 0$ by \eqref{chernlu2}, we may apply the strong maximum principle in view of Lemma \ref{Lapcompre}. It turns out that $u$ is identically zero.

    In Case (2), we note that (\ref{laplace}) is reduced to $\Box u \geq C u^2$. By Lemma \ref{gene}, $u$ is bounded on $M$. Motivated by (\ref{def_F_k}), we consider
    \[
    \widetilde{F}_k(x)=\frac{u(x)}{[\ln (2+r^2(x))]^{1/k}}, \ \text{where}\ k \in \mathbb{N}.
    \]
    By the same argument in the proof of Proposition \ref{gene}, we get a sequence $\{\widetilde{x}_k\}$ tending to infinity so that a similar estimate as in (\ref{chenyanglap2}) holds.
    \[
    \Box u(x_k) \leq \frac{C}{\ln(2+(r(x_k))^2)}.
    \]
    Then $\sup_M u=\lim_{k \rightarrow \infty} u(x_k)=0$. 
    
\end{proof}

\begin{corollary}\label{gene_coro}
Assume that the Hermitian manifold $(M, g)$ and the K\"ahler manifold $(N, h)$ satisfy the same assumptions as in Lemma \ref{gene}. Then the following conclusions hold.
\begin{enumerate}[label=(\arabic*)]
    \item  If $0\le \delta <1$ and $\alpha>0$, then $\mathcal{O}_{\beta}(M, N)=\{constant\}$ for some $\beta \leq \frac{1-\delta}{\alpha}$.
    \item   If $\delta=1$, then any bounded holomorphic map $f: M \rightarrow N$ is constant.
    \item   If $0 \le \delta \le 1$ and $\alpha=0$, then any holomorphic map $f: M \rightarrow N$ is constant. In particular, $M$ can not admit a K\"ahler metric with its holomorphic sectional curvature bounded above by a negative constant.
\end{enumerate}

\end{corollary}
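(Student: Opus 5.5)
The plan is to derive all three parts of Corollary \ref{gene_coro} from Lemma \ref{gene} and the Chern--Lu inequality \eqref{chernlu2}, copying the proof of Corollary \ref{Liouville}. Throughout, $u=\operatorname{tr}_g f^*h$. Since $Ric^{(2)}(g)\ge 0$, formula \eqref{chernlu2} together with Royden's lemma gives $\Box u\ge 0$. We also have the sharper bound \eqref{laplace}, namely $\Box u\ge C(1+d_h(f))^{-\alpha}u^2$.

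For (1), take $f\in\mathcal{O}_\beta(M,N)$ with $\alpha\beta<1-\delta$. Lemma \ref{gene} gives $u\le C_3(1+r)^{\alpha\beta+\delta-1}$, so $u\to 0$ at infinity. By Lemma \ref{Lapcompre}, $\Box$ is a second order elliptic operator with no zeroth-order term. Because $\Box u\ge0$, the maximum principle applies: if $u$ were positive somewhere, it would attain an interior maximum, and the strong maximum principle would make $u$ constant and hence zero. So $u\equiv0$, $df=0$, and $f$ is constant.

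The borderline $\alpha\beta=1-\delta$ makes the lemma give only boundedness of $u$, so I treat it as in (2). There the Omori--Cheng--Yau point-picking with $\widetilde F_k=u/[\ln(2+r^2)]^{1/k}$ is applied to the inequality $\Box u\ge C(1+r)^{-\alpha\beta}u^2$. It uses \eqref{chenyanglap} with torsion growth $(1+r)^{\delta}$. The point-picking gives $\Box u(x_k)\lesssim (1+r)^{\delta-1}/\ln(2+r^2)$ along $x_k\to\infty$. Comparing with the lower bound $C(1+r)^{\delta-1}u(x_k)^2$ forces $u(x_k)^2\lesssim 1/\ln(2+r^2)\to0$, so $\sup u=0$. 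The statement's phrasing ``for some $\beta\le\frac{1-\delta}{\alpha}$'' I read as ``for every''; the argument covers every such $\beta$.

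For (2), with $\delta=1$ and $f$ bounded, we have $d_h(f)$ bounded, i.e. $\beta=0$. Then \eqref{laplace} reads $\Box u\ge Cu^2$, and Lemma \ref{gene} with exponent $\alpha\cdot0+1-1=0$ gives $u$ bounded. The point-picking is then run exactly as in Case (2) of Corollary \ref{Liouville}. It yields $x_k\to\infty$ with $u(x_k)\to\sup u$, $\nabla u(x_k)$ small, and $\Box u(x_k)\le C/\ln(2+r(x_k)^2)$. The factor $(1+r)^{\delta-1}$ from Chen--Yang is harmless because $\delta=1$. Hence $C(\sup u)^2\le 0$, so $u\equiv0$.

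For (3), with $\alpha=0$, the curvature of $N$ is bounded above by a negative constant and \eqref{laplace} becomes $\Box u\ge Cu^2$ for every holomorphic $f$, with no growth assumption needed. Lemma \ref{gene} with $\alpha=0$ gives $u\le C(1+r)^{\delta-1}$, which is bounded since $\delta\le1$. Then the argument of (2) applies verbatim; when $\delta<1$ one can even use (1) directly, since $u\to0$. For the final claim, suppose $M$ carried a Kähler metric $h$ with $H(h)\le -c<0$. Apply the result to the identity map $(M,g)\to(M,h)$: it is nonconstant, a contradiction.

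The main point to check is that Lemma \ref{gene} truly holds with $\beta=0$ and $\alpha=0$; its proof does not need $\alpha\beta>0$, so this works. The other point is that Calabi's trick covers cut points in the point-picking step.
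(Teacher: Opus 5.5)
Your proposal is correct and takes the route the paper intends. The paper writes no separate proof of this corollary; it is the argument of Corollary \ref{Liouville} run with Lemma \ref{gene} and the Chern--Lu bound \eqref{laplace}, using the strong maximum principle when $u$ decays and the point-picking with $\widetilde F_k=u/[\ln(2+r^2)]^{1/k}$ when $u$ is only bounded. Your handling of the borderline case $\alpha\beta=1-\delta$, and your observation that for $\alpha=0$ no growth assumption on $f$ is needed, are exactly what parts (1) and (3) require as stated.
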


In view of \cite[Corollary 1.2]{Ni2004}, we propose the following question regarding the rigidity of holomorphic mappings of linear growth. 

\begin{question}\label{ques5_sec3}
Given two integers $n \geq 2$ and $m \geq 1$, we consider a complete simply connected Chern-flat Hermitian manifold $(M, g)$ of complex dimension $n$. Is every holomorphic map $f\in \mathcal{O}_1(M,\mathbb{C}^m)$, in the sense of \eqref{O_d_mapping}, necessarily Hermitian totally geodesic? Recall that Hermitian totally geodesic maps were defined in Subsection \ref{mapping_term}.
\end{question}

As supporting evidence, we make the following observation. Let $(M,g)$ be $\mathbb{C}^2$ equipped with the Hermitian metric given by \eqref{C2Herm}. By Proposition \ref{2dim_more}, any $f\in \mathcal{O}_1(M,\mathbb{C}^m)$ is of the form $f(z)=a\,z_1+b$ for some $a,b\in\mathbb{C}^m$. Hence $f$ satisfies the conclusion of Question \ref{ques5_sec3}.

\section{Complete Chern-flat manifolds with torsion of sublinear growth}\label{Sec_4}

In this section, we prove Theorem \ref{subline_intro}, which can be viewed as a uniformization theorem for complete Chern-flat manifolds with torsion of sublinear growth. We then prove Theorem \ref{poly_upperbound_intro}, obtaining sharp dimension estimates for linear spaces of holomorphic functions of polynomial growth. Finally, Theorem \ref{func_Lie_summary} provides a complete characterization of such functions on complete simply connected Chern-flat manifolds with torsion of sublinear growth.

\subsection{Proof of Theorem \ref{subline_intro}}

\begin{proof}[Proof of Theorem \ref{subline_intro}]
By Boothby's result (Lemma \ref{flat_para}), we can pick a globally unitary holomorphic frame 
$\{e_i\}_{i=1}^n$. As in the proof of Theorem \ref{boothby_main}, it suffices to show that $T_{i j}^k$, the torsion component under $\{e_i\}$, is constant for any $1\le i,j,k \le n$. Note that $T_{i j}^k$ is a well-defined holomorphic function on $M$ with $|T_{i j}^k (x)| \le C(1+d(x,p))^{\delta}$. Then the norm of its gradient $u_0=\sum_{l=1}^n |e_l(T_{ij}^k)|^2$. It follows from Theorem \ref{grad} that 
\begin{equation*}
    |e_l(T_{ij}^k)|(x) \le C(1+d(x,p))^{\delta-\frac{1-\delta}{3}}.
\end{equation*}
That is, each time we take a derivative with respect to the frame $\{e_i\}$, we obtain a holomorphic function whose growth order is $\frac{1-\delta}{3}$-order smaller than that of the original function. For any integer $p$ with $(p+1)\frac{(1-\delta)}{3}>\delta$, we consider the holomorphic function $F=e_{l_p}\cdots e_{l_1}(T_{ij}^k)$ for any given $1 \leq l_1, \cdots, l_p \leq n$. 
Note that $e_{l_{p+1}}(F)$ must be constant for any $l_{p+1}=1,2,\cdots,n$. Hence $|\nabla F|^2=\sum |e_{l_{t+1}}(F)|^2$ is constant. If this constant is nonzero, we show that $F$ has at least linear growth in the following claim. This contradicts the fact that the growth order of $F$ does not exceed $\delta<1$. Hence $\nabla F=0$ and $F$ must be constant. But then $F=0$ by the claim again. By a simple induction, each $T_{ij}^k$ is also constant.

To sum up, it remains to show the following result.

\vskip 0.2cm

\noindent \textbf{Claim}. Let $v$ be a real-valued smooth function on a complete noncompact manifold $(M,g)$. If  $|\nabla v|=c$ holds for a nonzero constant $c$, then $v$ has exactly linear growth in the sense that
\begin{equation}
        \limsup_{r(x)\to \infty} \frac{v(x)}{d_g(x,p)} =c. \label{exact_lin}
\end{equation}

\vskip 0.2cm

First of all, we show that the $``\geq"$ inequality in (\ref{exact_lin}) holds. Since $\nabla v$ is a nowhere-vanishing vector field, we can solve the integral curve $\gamma(t)$ such that $\gamma(0)=p$ and $\gamma'(t)=\frac{\nabla v (\gamma(t))}{|\nabla v (\gamma(t))|}$. We observe that $\gamma(t)$ is well-defined for any $t \in [0,+\infty)$ and 
    \begin{equation*}
       v(\gamma(t)) =v(p)+ct \to +\infty. 
    \end{equation*}
On the other hand, $d_g(\gamma(t),p)\le L(\gamma|_{[0, t]})=t$. Then $``\geq"$ inequality in (\ref{exact_lin}) follows if we evaluate the $\limsup$ expression in (\ref{exact_lin}) along $\gamma(t)$.

Next, we show $v(x) \leq c(1+d_g(x,p))$ for any $x \in M$. To see it, we pick a unit-speed minimizing geodesic $\delta(t)$ from $p$ to $x$. Then $\frac{d v(\delta(t))}{dt}=|\nabla v (\delta(t))| \leq c$. The desired inequality follows after an integration.

\end{proof}

\subsection{A characterization of linear-growth holomorphic functions and the role of nilpotent Lie groups}

Motivated by earlier works on harmonic functions of linear growth on complete manifolds with nonnegative Ricci curvature \cite{LT1989,Li1995,CCM1995}, we prove the following characterization of $\mathcal{O}_1(G)$.

\begin{corollary}\label{lin_space_char}
    Let $(G,g)$ be a simply connected complex Lie group with a left-invariant Hermitian metric. Assume $\operatorname{dim} G=n$. Let $H=[G,G]$ be the derived subgroup of $G$ and $k$ be the rank of the abelian group $G/H$. Then $G/H$, endowed with the induced metric from $g$, is holomorphically isometric to the complex Euclidean space $\mathbb{C}^k$ for some integer $k \geq 0$. Moreover, we have
    \begin{equation}
        \operatorname{dim} \mathcal{O}_1 (G)=k+1.  \label{dim_count_intro}
    \end{equation} 
    In particular, $(G, g)$ is holomorphically isometric to the complex vector group $\mathbb{C}^n$ with the flat metric if and only if $\operatorname{dim} \mathcal{O}_1 (G)=n+1$.
\end{corollary}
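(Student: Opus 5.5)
The plan is to show that every $f\in\mathcal{O}_1(G)$ is, up to an additive constant, a complex linear functional pulled back from $G/H\cong\mathbb{C}^k$; the dimension count then follows. First we treat the structure of $G/H$. Since $G$ is simply connected, $H=[G,G]$ is the connected subgroup with Lie algebra $[\mathfrak g,\mathfrak g]$. It is closed, and $G/H$ is simply connected; this follows from the standard fact that in a simply connected Lie group the commutator subgroup is closed and simply connected, with quotient the simply connected group of the abelian algebra $\mathfrak g/[\mathfrak g,\mathfrak g]$. Hence $G/H$ is a simply connected abelian complex Lie group, so it is $\mathbb{C}^k$ with $k=\dim_{\mathbb C}\mathfrak g/[\mathfrak g,\mathfrak g]$. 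Put on $\mathfrak g/[\mathfrak g,\mathfrak g]$ the quotient Hermitian inner product, identifying it with $[\mathfrak g,\mathfrak g]^{\perp}$. The induced left-invariant metric on an abelian group is translation invariant, hence flat, so $G/H$ is holomorphically isometric to $\mathbb{C}^k$. The projection $\pi:G\to G/H$ is a Riemannian submersion, hence distance nonincreasing. Therefore linear functions on $\mathbb{C}^k$ pull back to elements of $\mathcal{O}_1(G)$, and $\dim\mathcal{O}_1(G)\ge k+1$.

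For the upper bound, $(G,g)$ is Chern-flat with constant torsion norm, so $\delta=0$ in Theorem \ref{grad}. Take $f\in\mathcal{O}_1(G)$ and a left-invariant unitary holomorphic frame $\{e_i\}$. By Theorem \ref{grad} with $\widetilde\delta=1$, the function $u_0=\sum|e_l f|^2$ satisfies $u_0\le C(1+r)^{1+\epsilon}$. This alone is not enough, so we iterate as in the proof of Theorem \ref{subline_intro}. Each $e_lf$ is holomorphic of growth order at most $\tfrac12+\epsilon$. Apply Theorem \ref{grad} again with $\widetilde\delta=\tfrac12+\epsilon$: we get $|e_me_lf|^2\le C(1+r)^{2\epsilon'}$. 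By Corollary \ref{Liouville}(1), which needs order $<\tfrac12$, we must push once more. Indeed $e_me_lf$ has order $\epsilon'<\tfrac12$, so it is constant. Then $e_lf$ has gradient of constant length. The Claim in the proof of Theorem \ref{subline_intro} says that a function with gradient of constant nonzero length has exactly linear growth. Since $e_lf$ has order $\le\tfrac12+\epsilon<1$, the gradient must vanish, so $e_lf\equiv c_l$ is constant for every $l$.

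Next we use the constancy of the $e_lf$. Then $[e_i,e_j]f=e_ie_jf-e_je_if=0$. Since the $e_i$ span $\mathfrak g$, $f$ is annihilated by the left-invariant fields in $[\mathfrak g,\mathfrak g]$, which are tangent to the cosets $xH$. So $f$ is constant on the fibers of $\pi$ ($H$ is connected) and descends to a holomorphic $\bar f$ on $\mathbb{C}^k$. Because $df=\sum c_l\varphi^l$ is a left-invariant form, $d\bar f$ is a constant linear form, so $\bar f$ is affine. Thus $\mathcal{O}_1(G)=\pi^*\{\text{affine functions}\}$, which has dimension $k+1$.

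For the rigidity statement, if $\dim\mathcal{O}_1(G)=n+1$, then $k=n$, so $[\mathfrak g,\mathfrak g]=0$. Then $G$ is abelian and simply connected, and its left-invariant metric is flat, giving $\mathbb{C}^n$. The converse is immediate.

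The main obstacle is the growth bookkeeping in the iteration. Theorem \ref{grad} loses only half an order plus $\epsilon$ per step, so one must check that two steps plus the Claim really force the $e_lf$ to be constant. A possible alternative, if the exponents misbehave, is to use left-invariance directly: average $f$ over translates, or use the Claim on $e_lf$ as soon as its gradient has constant norm.
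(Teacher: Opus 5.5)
Your proposal is correct and follows essentially the same route as the paper: pull back linear coordinates through the distance-nonincreasing projection $\pi: G\to G/H\cong\mathbb{C}^k$ for the lower bound, then rerun the iteration from the proof of Theorem \ref{subline_intro} (two applications of Theorem \ref{grad} with $\delta=0$, Corollary \ref{Liouville}(1), and the Claim) to make each $e_l f$ constant and hence kill the $[\mathfrak g,\mathfrak g]$-directions. The only cosmetic difference is at the end: you show that every $f\in\mathcal{O}_1(G)$ descends to an affine function on $G/H$, while the paper uses a linear-algebra argument to show that any $k+1$ functions in $\mathcal{O}_1(G)$ vanishing at $\mathbf{1}_G$ are linearly dependent.
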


\begin{lemma}\label{liegrouplem}
    Let $G$ be a simply connected complex Lie group and $H=[G,G]$ be the derived group which is generated by all commutators of $G$. Then $H$ is a connected closed normal subgroup of $G$. Moreover, $G/H$, if not trivial, is isomorphic to the vector group $\mathbb{C}^k$ for some $k \in \mathbb{N}$.
\end{lemma}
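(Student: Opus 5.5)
We work entirely with the Lie correspondence for simply connected groups. Let $\mathfrak g$ be the Lie algebra of $G$ and $\mathfrak h=[\mathfrak g,\mathfrak g]$ its derived subalgebra, which is an ideal. Let $H_0$ be the connected Lie subgroup of $G$ with Lie algebra $\mathfrak h$. Since $\mathfrak h$ is an ideal, $H_0$ is normal.

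The key structural input is that in a simply connected Lie group, the connected subgroup of an ideal is closed. This can be seen by realizing $H_0$ as the identity component of the kernel of a homomorphism. Since $G$ is simply connected, the quotient map of Lie algebras $\pi:\mathfrak g\to\mathfrak g/\mathfrak h\cong\mathbb C^k$ integrates to a holomorphic homomorphism $\Phi:G\to(\mathbb C^k,+)$ with $d\Phi=\pi$. Then $\ker\Phi$ is a closed normal complex Lie subgroup whose Lie algebra is $\ker\pi=\mathfrak h$. Hence $H_0=(\ker\Phi)_0$ is closed.

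The next step is to show that $\ker\Phi$ is connected. Because $d\Phi$ is surjective and $\mathbb C^k$ is connected, $\Phi$ is surjective and is a fibration $G\to\mathbb C^k$ with fiber $\ker\Phi$. The long exact homotopy sequence gives
\[
\pi_1(G)\to\pi_1(\mathbb C^k)\to\pi_0(\ker\Phi)\to\pi_0(G).
\]
Since $\pi_1(\mathbb C^k)=0$ and $\pi_0(G)=0$, we get $\pi_0(\ker\Phi)=0$. Thus $\ker\Phi=H_0$ is connected and closed, and $G/H_0\cong\mathbb C^k$ as complex Lie groups, via the biholomorphism induced by $\Phi$.

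It remains to identify $H_0$ with the abstract derived group $H=[G,G]$. First, $G/H_0\cong\mathbb C^k$ is abelian, so every commutator lies in $H_0$, giving $H\subseteq H_0$. Conversely, $H$ is normal and path-connected, being generated by commutators $[x,y]$ which can be joined to $\mathbf 1_G$ by paths $t\mapsto[\gamma(t),y]$ with $\gamma$ a path from $\mathbf 1_G$ to $x$. By Yamabe's theorem, $H$ is then a connected Lie subgroup. For $X,Y\in\mathfrak g$, the curve $t\mapsto[\exp(\sqrt t X),\exp(\sqrt t Y)]$ lies in $H$ and has tangent vector $[X,Y]$ at $t=0$. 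So the Lie algebra of $H$ contains all brackets, hence contains $\mathfrak h$. Therefore $H_0\subseteq H$, and $H=H_0$.

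This gives all the claims: $H$ is connected, closed and normal, and $G/H\cong\mathbb C^k$, which is trivial exactly when $k=0$. The main obstacle is the closedness and connectedness of $\ker\Phi$. This is handled by integrating $\pi$ using simple connectivity together with the homotopy exact sequence. The identification $[G,G]=H_0$ is standard and I will cite it rather than belabor it.
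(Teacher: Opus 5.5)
Your proof is correct, and it takes a genuinely different route from the paper.

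\textbf{The paper's route.} The paper treats as a black box (citing Hochschild and Conrad) that the abstract commutator group $H$ is the connected Lie subgroup with Lie algebra $[\mathfrak g,\mathfrak g]$, and that it is closed and normal. It then runs the homotopy exact sequence of $H\to G\to G/H$ in the direction
\[
\pi_1(G)=0,\ \pi_0(H)=0\ \Longrightarrow\ \pi_1(G/H)=0.
\]
It concludes with the classification fact that a simply connected abelian complex Lie group is a vector group.

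\textbf{Your route.} You use the simple connectivity of $G$ at the outset, to integrate the abelianization $\mathfrak g\to\mathfrak g/[\mathfrak g,\mathfrak g]\cong\mathbb C^k$ to a holomorphic homomorphism $\Phi\colon G\to\mathbb C^k$. Closedness of $\ker\Phi$ then comes for free. You use the exact sequence the other way round: contractibility of $\mathbb C^k$ and connectedness of $G$ force $\pi_0(\ker\Phi)=0$. The isomorphism $G/H\cong\mathbb C^k$ is produced explicitly, with no classification needed.

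\textbf{What each buys.} Your approach gives a self-contained proof of the closedness that the paper outsources. It is the standard argument that an ideal in a simply connected group has a closed analytic subgroup, specialized to an abelian quotient. The price is that you must separately identify $\ker\Phi$ with the group generated by commutators, which you do via Yamabe's theorem. That is essentially the same Lie-theoretic input the paper cites.

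Your $\sqrt t$-curve step is fine once you note that analytic subgroups are initial submanifolds. Then $s\mapsto[\exp(sX),\exp(sY)]$ is smooth as a map into $H$, with vanishing first derivative at $s=0$, so the reparametrized curve has one-sided velocity $[X,Y]$ in $\operatorname{Lie}(H)$.
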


\begin{proof}[Proof of Lemma \ref{liegrouplem}]
    First of all, we observe that $H$ is connected once $G$ is connected. Indeed, the set of all commutators of $G$ is connected in $G$. So is $H$ as it is generated by a connected subset containing the identity $\mathbf{1}_G \in G$.
    Assume that $G$ is simply connected with its Lie algebra $\mathfrak{g}$. Then $H$ is a connected Lie subgroup of $G$ with Lie algebra $[\mathfrak{g}, \mathfrak{g}]$. Moreover, it is a closed normal subgroup of $G$. See \cite[p.~138]{Hoch_B} or \cite[Theorem W.2.2]{Conrad2018} for a complete proof. Then $\pi:G \to G/H$ has a $H$-principal bundle structure. Consider the long exact sequence of homotopy groups
    \begin{equation*}
        \cdots \to \pi_1(H) \to \pi_1(G) \to \pi_1(G/H) \to \pi_0(H) \to \cdots.
    \end{equation*}
    Since $\pi_1(G)=0$ and $\pi_0(H)=0$, $G/H$ is simply connected. The only simply connected abelian complex Lie group is the vector group $\mathbb{C}^k$.
\end{proof}

\begin{proof}[Proof of Corollary \ref{lin_space_char}]
Let $\mathfrak{g}$ and $\mathfrak{h}=[\mathfrak{g},\mathfrak{g}]$ denote the Lie algebras of $G$ and $H$ respectively. We choose $\{e_1,\cdots,e_n\}$ as a globally unitary left-invariant holomorphic frame on $(G, g)$. Moreover, we may assume that $\{e_{k+1}, \cdots,e_n\}$ is the corresponding frame on $\mathfrak{h}$. As $H$ is a normal subgroup of $G$, the two coset spaces $G/H$ and $H \backslash G$ are identical. Then the left-invariant metric $g$ descends to a left-invariant Hermitian metric $\widehat{g}$ on $G/H$. Let $\pi: G \to G/H$ be the projection map. Let $\widehat{e}_i=\pi_{\ast} e_i$ for $i=1,2,\cdots,k$ denote the induced frame on $G/H$. Since $[e_i, e_j] \in \mathfrak{h}$, we get that $[\widehat{e}_i,\widehat{e}_j]=\pi_{\ast}[e_i, e_j]=0$. It is direct to check that $d\widehat{\omega}=0$. Then $\widehat{g}$ is K\"ahler. Indeed, $\{\widehat{e}_1, \cdots,\widehat{e}_k\}$ is a unitary left-invariant holomorphic frame with respect to $\widehat{g}$. We conclude that $(G/H, \widehat{g}$) is holomorphically isometric to the Euclidean space $\mathbb{C}^k$. In view of this identification, we may choose the global coordinate functions $z_1,\ldots,z_k$ on $G/H$ which vanish at the identity element $[\mathbf{1}_{G}] \in G/H$. Note that \begin{equation}
     d_{\widehat{g}}(\pi(x),\pi(y))=\inf_{\widetilde{y} \in \pi^{-1}(y)}d_g(x,\widetilde{y}) \le d_g(x,y). \label{dcontrol}
\end{equation}
Then $\pi^*z_1,\cdots, \pi^*z_k \in \mathcal{O}_1(G)$. Obviously $\pi^{\ast}: \mathcal{O}_1(G/H) \rightarrow \mathcal{O}_1(G)$ is injective. Therefore $\operatorname{dim} \mathcal{O}_1 (G)\ge k+1$ after we include the constant function.

Choose $f_1,\ldots,f_{k+1}\in\mathcal{O}_1(G)$, each of which vanishes at $\mathbf{1}_G \in G$. We show that they are linearly dependent in $\mathcal{O}_1(G)$. Fix any $1\leq j\leq k+1$. Then, by the same argument as in the proof of Theorem \ref{subline_intro}, $e_i(f_j)$ is constant for every $i=1,2,\ldots,n$. Moreover, $X(f)=0$ for any $X\in[\mathfrak{g},\mathfrak{g}]$ for $k+1\leq i\leq n$, and then $e_i(f_j)=0$ for every $k+1\leq i\leq n$. Consider the following linear system with respect to $a_1, \cdots, a_{k+1}$. 
 \[
\left\{
\begin{array}{ccccccc}
e_1(f_{1})a_1 & + & e_1(f_{2})a_2 & + & \cdots & + & e_1(f_{k+1})a_{k+1} =0  \\
e_2(f_{1})a_1 & + & e_2(f_{2})a_2 & + & \cdots & + & e_2(f_{k+1})a_{k+1} =0 \\
\vdots    &   & \vdots    &   &        &   & \vdots \\
e_k(f_{1})a_1 & + & e_k(f_{2})a_2 & + & \cdots & + & e_k(f_{k+1})a_{k+1} =0.
\end{array}
\right.
\]
Since the number of unknowns is greater than the number of equations, it admits a nontrivial solution $a_1,\cdots,a_{k+1}$. Hence $e_i(a_1 f_1+\cdots+a_{k+1} f_{k+1})=0$ for all $1 \leq i \leq n$. It follows that $a_1 f_1+\cdots+a_{k+1} f_{k+1}=0$ as each $f_i$ satisfies $f_i(\mathbf{1}_G)=0$. Thus $\operatorname{dim} \mathcal{O}_1 (G) \le k+1$ follows.
\end{proof}

\begin{proof}[Proof of Theorem \ref{poly_space_char_intro}]
    First of all, we note that $\widehat{H}$ is a closed connected normal subgroup of $G$ as a result of \cite[p.~138]{Hoch_B}. By the same argument in the proof of Corollary \ref{lin_space_char}, $G/\widehat{H}$ is simply connected. Consider the projection $\pi: G \rightarrow G/\widehat{H}$. We need to show that $\pi^{\ast}: \mathcal{O}_d(G/\widehat H) \to \mathcal{O}_d(G)$ is an isomorphism between vector spaces. In view of \eqref{dcontrol}, $\pi^{\ast}$ is well-defined and injective. It remains to show $\pi^{\ast}$ is surjective. As in the proof of Corollary \ref{lin_space_char}, let $\mathfrak{g}$ and $\widehat{\mathfrak{h}}$ denote the Lie algebras of $G$ and $\widehat{H}$ respectively. We choose $\{e_1,\cdots,e_n\}$ as a global unitary left-invariant holomorphic frame on $(G, g)$. $\{e_{p+1}, \cdots,e_n\}$ as the corresponding frame on $\widehat{\mathfrak{h}}$. For any given $f \in \mathcal{O}_d(G)$, we show that it is constant along each fiber of $\pi$. Indeed, by the definition of $\widehat{H}$, for any $p+1 \leq j \leq n$, we may write $e_j$ as a linear combination of $[\cdot, [\cdot, [\cdots]]]$ of a sufficiently large length. Applying Proposition \ref{grad_intro} repeatedly, we get that $e_j(f) \equiv 0$. Thus we may assume $f=\pi^{\ast}\widehat{f}$ for some $\widehat{f} \in \mathcal{O}(G/\widehat{H})$. According to \eqref{dcontrol} that the induced metric $\widehat{g}$ on $G/\widehat{H}$ is a quotient metric, we estimate
    \[
    \widehat{f}([x])=f(x)=f(\widetilde{x}) \leq C(1+d_g(\widetilde{x}, \mathbf{1}_G ))^d=C(1+d_{\widehat{g}}([x], [\mathbf{1}_G]))^d.
    \] Here we choose $\widetilde{x} \in [x]$ such that $d_{\widehat{g}}([x], [\mathbf{1}_G])=d_g(\widetilde{x}, \mathbf{1}_G)$. To sum up, we have proved $\pi^{\ast}$ is surjective.    
\end{proof}

\subsection{Sharp upper bounds for the dimensions of spaces of holomorphic functions of polynomial growth}

Let $G$ be a simply connected, nilpotent, complex Lie group and $\mathfrak{g}$ its Lie algebra. Assume that $\mathfrak{g}$ is $s$-step nilpotent for some integer $s\geq 1$. According to the lower central series in \eqref{l_central_def}, we have
\begin{equation}\label{l_central_pf}
    \mathfrak{g}=\mathfrak{g}_1 \supset \mathfrak{g}_2 \supset \cdots \supset\mathfrak{g}_s \supset \mathfrak{g}_{s+1}=\{0\}.
\end{equation}
We may check $[\mathfrak{g}_{i}, \mathfrak{g}_{j}] \subset \mathfrak{g}_{i+j}$. Now we construct a basis of $\mathfrak{g}$ inductively. First choose a basis of $\mathfrak g_s$ and introduce
\begin{equation}
V_s=\operatorname{span}\{e_{(s,1)},\ldots,e_{(s,n_s)}\}.       \label{adapted_index}
\end{equation}
Next, for each $k=s-1,\ldots,1$, we introduce
\[
V_k=\operatorname{span}\{e_{(k,1)},\ldots,e_{(k,n_k)}\} \subset \mathfrak g_k,\ \ \ n_k=\dim_{\mathbb C}\mathfrak g_k-\dim_{\mathbb C}\mathfrak g_{k+1}, 
\]
such that the images of $\{e_{(k,1)},\ldots,e_{(k,n_k)}\}$ form a basis of $\mathfrak g_k/\mathfrak g_{k+1}$. Note that $\mathfrak{g}_k=V_k \oplus \mathfrak{g}_{k+1}$ for any $1 \leq k \leq s-1$. In particular, we obtain a decomposition
\begin{equation} \label{adapt_decomp}
\mathfrak{g}=V_1 \oplus \cdots \oplus V_s.  
\end{equation}
Such a decomposition of $\mathfrak g$ is called an \emph{adapted decomposition} (with respect to the lower central series \eqref{l_central_pf}). 

\begin{lemma}[See, for example, {\cite[Exercise 10.5.6]{LeDonne_B}}]\label{adapted_lemma}
Let $\mathfrak{g}$ be an $s$-step nilpotent Lie algebra with the lower central series \eqref{l_central_pf}. For $r\geq 1$, define
\[
L_r(V_1)
\coloneqq\operatorname{span}\left\{
[x_1,[x_2,\ldots,[x_{r-1},x_r]]]
\ |\ \ x_1,\ldots,x_r\in V_1
\right\}.
\]
Then there exists an adapted decomposition such that
\[
V_r \subseteq L_r(V_1), \ 1 \leq r < s,\ \ \ \  V_s=L_s(V_1), 
\]
and
\[
[V_i, V_j] \subset V_{i+j}\oplus V_{i+j+1}\oplus\cdots\oplus V_s,\ \ \ 1 \leq i+j \leq s.
\]
\end{lemma}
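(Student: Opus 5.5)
We construct the adapted decomposition directly from iterated brackets of a complement of $\mathfrak g_2$. Choose $V_1$ to be any complement of $\mathfrak g_2$ in $\mathfrak g$, so that $\mathfrak g=V_1\oplus\mathfrak g_2$.

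\textbf{Step 1: generation.} We show by induction on $r$ that
\[
\mathfrak g_r=L_r(V_1)+\mathfrak g_{r+1}.
\]
For $r=1$ this is the choice of $V_1$. Assume it holds for $r$. Then
\[
\mathfrak g_{r+1}=[\mathfrak g,\mathfrak g_r]=[V_1\oplus\mathfrak g_2,\; L_r(V_1)+\mathfrak g_{r+1}].
\]
Since $[\mathfrak g_i,\mathfrak g_j]\subset\mathfrak g_{i+j}$, the terms $[\mathfrak g_2,\cdot]$ and $[\cdot,\mathfrak g_{r+1}]$ land in $\mathfrak g_{r+2}$. The remaining term is $[V_1,L_r(V_1)]=L_{r+1}(V_1)$, which gives the claim for $r+1$. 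Iterating and using $\mathfrak g_{s+1}=0$ yields
\[
\mathfrak g_r=L_r(V_1)+L_{r+1}(V_1)+\cdots+L_s(V_1),
\]
and in particular $\mathfrak g_s=L_s(V_1)$.

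\textbf{Step 2: choice of the $V_r$.} Set $V_s\coloneqq\mathfrak g_s=L_s(V_1)$. For $1<r<s$, the image of $L_r(V_1)$ in $\mathfrak g_r/\mathfrak g_{r+1}$ is surjective by Step 1. Hence we can pick vectors in $L_r(V_1)$ whose images form a basis of the quotient, and let $V_r$ be their span. Then $V_r\subseteq L_r(V_1)$ and $\mathfrak g_r=V_r\oplus\mathfrak g_{r+1}$. This gives an adapted decomposition $\mathfrak g=V_1\oplus\cdots\oplus V_s$, with $\mathfrak g_k=V_k\oplus\cdots\oplus V_s$ for each $k$.

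\textbf{Step 3: the bracket relation.} Since $V_i\subset\mathfrak g_i$ and $V_j\subset\mathfrak g_j$,
\[
[V_i,V_j]\subset\mathfrak g_{i+j}=V_{i+j}\oplus\cdots\oplus V_s,
\]
which is zero when $i+j>s$. Note that this step uses only the adapted property together with $[\mathfrak g_i,\mathfrak g_j]\subset\mathfrak g_{i+j}$, not the special choice of $V_r$.

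\textbf{Main obstacle.} The main technical point is the inclusion $[\mathfrak g_i,\mathfrak g_j]\subset\mathfrak g_{i+j}$, which the paper asserts but which Steps 1 and 3 both rely on. It is proved by induction on $i$ using the Jacobi identity. The base case $i=1$ is the definition of the lower central series. For the inductive step,
\[
[[\mathfrak g,\mathfrak g_i],\mathfrak g_j]\subset[\mathfrak g,[\mathfrak g_i,\mathfrak g_j]]+[\mathfrak g_i,[\mathfrak g,\mathfrak g_j]]\subset\mathfrak g_{i+j+1}.
\]
Once this is in place, the remaining steps are linear algebra.
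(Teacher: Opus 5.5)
Your proof is correct and complete. The paper does not prove this lemma itself: it only cites \cite[Exercise 10.5.6]{LeDonne_B}, and it asserts $[\mathfrak g_i,\mathfrak g_j]\subset\mathfrak g_{i+j}$ without verification. Your argument is the standard solution that citation stands for: you show $\mathfrak g_r=L_r(V_1)+\mathfrak g_{r+1}$, choose each $V_r$ inside $L_r(V_1)$, and observe that the bracket condition holds for any adapted decomposition. Your Jacobi-identity induction also supplies the one fact the paper leaves unchecked.
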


For a simply connected nilpotent complex Lie group $G$, the exponential map $\exp:\mathfrak{g} \to G$ is a biholomorphism, see \cite[Theorem 1.2.1]{CG1990} or \cite[Theorem 1.104]{Knapp2002} for a detailed proof. The above choice of the basis of $\mathfrak{g}$ produces global coordinate functions $(z^1,\cdots,z^n)$ on $G$ such that
\begin{equation}\label{exp_holo}
\exp (z^1(q) e_1+ \cdots+ z^n(q) e_n)=q \in G.    
\end{equation}
Let $X=z^{i}e_i \in \mathfrak{g}$ and $dX=dz^i \otimes e_i$. Then the Maurer--Cartan form in terms of exponential coordinates can be expressed as a finite series.
\begin{equation}\label{series}
\sum_{k\geq 0}
\frac{(-1)^k}{(k+1)!}\operatorname{ad}_X^k(dX)=\sum_{k=0}^{s-1} \frac{(-1)^k}{(k+1)!} ad_X^k dX.    
\end{equation}
Here we use the fact that $\mathfrak{g}$ is $s$-step nilpotent. For a proof of \eqref{series}, see \cite[Theorem 2.14.3]{Vara_B_1984} for general Lie groups, and \cite[Theorem 5.4]{Hall_B} for matrix groups. Write
\(
[e_i,e_j]=c_{ij}^k e_k,
\)
where $c_{ij}^k$ denotes the structure constants of $\mathfrak g$ with respect to the basis $\{e_1,\ldots,e_n\}$. We may write down the left-invariant coframe with respect to $\{e_i\}$ based on \eqref{series}.
\begin{equation}\label{nil_coframe}
    \varphi^i=dz^i -\frac{1}{2} c_{jk}^i z^j dz^k +\frac{1}{6} c_{pq}^i c_{jk}^q z^p z^j dz^k+ \ldots,\ \ \ 1\leq i, j, k, p, q \leq n.
\end{equation}
Now we consider the left-invariant (Chern-flat) Hermitian metric on $G$ in the form of
\begin{equation}\label{nil_metric}
    \omega=\sqrt{-1}(\varphi^1 \wedge \overline{\varphi^1}+\cdots+\varphi^n \wedge \overline{\varphi^n}).
\end{equation}

The following result estimates the growth order of coordinate functions of the exponential map with respect to \eqref{nil_metric}.

\begin{proposition}\label{coordi_grow}
    Let $G$ be a simply connected complex Lie group such that its Lie algebra $\mathfrak{g}$ is $s$-step nilpotent. Choose any $p=(k, \alpha)$ with $1 \leq k \leq s$ and $1 \leq \alpha \leq n_k$. Here we follow the notation in \eqref{adapted_index}. Then we have $z^p \in \mathcal{O}_k(G)$. Moreover, for any $\epsilon>0$, we have $z^p \notin \mathcal{O}_{k-\epsilon}(G)$ for $p=(k, \alpha)$ where $2 \leq k \leq \min(3, s)$.
\end{proposition}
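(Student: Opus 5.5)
The plan has two parts. The upper bound $z^p\in\mathcal{O}_k(G)$ follows by integrating the coframe along minimizing geodesics, inducting on the weight $k$. The sharpness for $k=2,3$ follows by evaluating $z^p$ along explicit group commutators, which have length $O(t)$ but whose $e_p$-coordinate grows exactly like $t^k$.

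\emph{Upper bound.} Instead of \eqref{nil_coframe}, I will use its inverse. Since $\operatorname{ad}_X$ is nilpotent, the Maurer--Cartan identity \eqref{series} can be inverted by a finite Bernoulli series:
\[
dX=\sum_{m=0}^{s-1}\frac{B_m}{m!}\,\operatorname{ad}_X^m(\Phi),\qquad \Phi=\varphi^l\otimes e_l .
\]
Reading off the $e_i$-component gives
\[
dz^i=\varphi^i+\sum_{m\ge1}\sum a^{i}_{j_1\cdots j_m l}\, z^{j_1}\cdots z^{j_m}\varphi^l .
\]
The key structural fact is the weight condition: $a^i_{j_1\cdots j_m l}\neq0$ only if $d_{j_1}+\cdots+d_{j_m}+d_l\le d_i$. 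This holds because $e_l\in\mathfrak g_{d_l}$ and $\operatorname{ad}_{e_j}$ maps $\mathfrak g_a$ into $\mathfrak g_{a+d_j}$. Since $\mathfrak g_{a}=V_a\oplus\cdots\oplus V_s$, such a vector has zero $V_{d_i}$-component whenever $a>d_i$.

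Now fix $q$ and a unit-speed minimizing geodesic $\gamma$ from $\mathbf 1_G$ to $q$ of length $L=d(q,\mathbf 1_G)$. Because the frame is unitary, $|\varphi^l(\gamma')|\le 1$. I then argue by induction on the weight. Suppose $|z^j(\gamma(t))|\le C(1+t)^{d_j}$ holds for all $j$ with $d_j<k$, and let $d_i=k$. Every monomial in the expression for $dz^i$ has $\sum d_{j_r}\le k-d_l\le k-1$ and contains only coordinates of weight $<k$. Hence
\[
\Bigl|\tfrac{d}{dt}z^i(\gamma(t))\Bigr|\le C(1+t)^{k-1}.
\]
Integrating from $0$ to $L$ gives $|z^i(q)|\le C(1+L)^k$. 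The base case $k=1$ is the same computation with no lower-weight coordinates involved.

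\emph{Sharpness for $k=2$.} Since $\mathfrak g_2=[\mathfrak g_1,\mathfrak g_1]\subset [V_1,V_1]+\mathfrak g_3$, the nonzero linear functional "$e_p$-coordinate" on $\mathfrak g_2/\mathfrak g_3$ is nonzero on some bracket $[x,y]$ with $x,y\in V_1$. Consider
\[
c(t)=\exp(tx)\exp(ty)\exp(-tx)\exp(-ty).
\]
This point is joined to $\mathbf 1_G$ by a broken curve of one-parameter segments, so $d(c(t),\mathbf 1_G)\le Ct$. By Baker--Campbell--Hausdorff, $\log c(t)=t^2[x,y]+R$, where $R$ consists of brackets of length $\ge3$ and therefore lies in $\mathfrak g_3$. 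So $R$ does not contribute to the $V_2$-coordinate, and $z^p(c(t))=\kappa t^2$ with $\kappa\neq0$. If $z^p\in\mathcal{O}_{2-\epsilon}(G)$, this would give $|\kappa|t^2\le C(1+Ct)^{2-\epsilon}$ for all $t$, which is impossible.

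\emph{Sharpness for $k=3$.} Choose $x,y,w\in V_1$ with $[x,[y,w]]$ having nonzero $e_p$-coordinate; this is possible by the same argument, now using $\mathfrak g_3\subset L_3(V_1)+\mathfrak g_4$. Let $b(t)$ be the commutator of $\exp(ty)$ and $\exp(tw)$, and set $B=\log b(t)=t^2[y,w]+O(t^3)$, with the remainder in $\mathfrak g_3$. Then take the group commutator of $\exp(tx)$ with $b(t)$. Its length is again $O(t)$. Every BCH term of this commutator contains both $A=tx$ and $B$. The only contribution outside $\mathfrak g_4$ is therefore the leading part of $[A,B]$, namely $t^3[x,[y,w]]$. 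Consequently $z^p=\kappa t^3$ exactly, and the same contradiction follows.

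The main obstacle is this bookkeeping: verifying that all BCH error terms land in $\mathfrak g_{k+1}$ and hence have zero $V_k$-coordinate. For $k\ge4$, nested commutators contain more terms that mix exactly at weight $k$, which is why the statement stops at $\min(3,s)$.
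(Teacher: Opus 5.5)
Your proof is correct, but the sharpness part takes a genuinely different route from the paper.

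\textbf{Upper bound.} This is essentially the paper's induction on the weight. The paper integrates $\varphi^p=dz^p+\sum_A\Xi_A$ along a curve and inducts on the total variation $\int|(z^q)'|\,dt$ of the lower-weight coordinates. You instead invert the Maurer--Cartan series, so that $dz^i$ is a combination of the $\varphi^l$ with polynomial coefficients in coordinates of weight $<d_i$. Your version is slightly cleaner: only $|\varphi^l(\gamma')|\le 1$ is used, and your weight condition holds for any adapted basis, without Lemma~\ref{adapted_lemma}.

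\textbf{Sharpness: the paper's route.} The paper builds horizontal curves whose first-layer coordinates are $z^i(t)=t^{a_i}$ with $a_i=1-10^{-b_i}$. It solves $\varphi^\beta=0$ and $\varphi^{\widehat k}=0$ as ODEs. The exponents must be chosen so that terms with equal powers of $t$ cannot cancel. It obtains growth of order $t^{k-O(\epsilon)}$ against a length of at most $t$, and this bookkeeping is what stops it at $k\le 3$.

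\textbf{Sharpness: your route.} Your commutator curves replace all of that with BCH plus the filtration $[\mathfrak g_i,\mathfrak g_j]\subset\mathfrak g_{i+j}$. They give the exact value $z^p=\kappa t^k$ along a curve of length $O(t)$. You do not even need $x,y,w\in V_1$, since $\mathfrak g_k$ is by definition spanned by brackets $[x_1,[x_2,\ldots,x_k]]$ with $x_i\in\mathfrak g$.

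\textbf{Your remark about $k\ge 4$ is mistaken.} Your method works for every $k\ge 2$. Let $c_m(t)$ be the group commutator of $\exp(tx_1)$ with the analogous $(m-1)$-fold commutator built from $x_2,\ldots,x_m$. Put $A=tx_1$ and let $B$ be the logarithm of the inner commutator. By induction, $B\in\mathfrak g_{m-1}$ and $B\equiv t^{m-1}[x_2,[\ldots,x_m]]$ modulo $\mathfrak g_m$. Every BCH term of $\log c_m(t)$ other than $[A,B]$ contains at least one $A$, one $B$ and a third letter. For $m\ge 2$ such a term therefore lies in $\mathfrak g_{m+1}$.

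It follows that $\log c_m(t)\in\mathfrak g_m$ and $\log c_m(t)\equiv t^m[x_1,[x_2,\ldots,x_m]]$ modulo $\mathfrak g_{m+1}$. Meanwhile the length of the broken curve reaching $c_m(t)$ is at most $C_m t$. Choose $x_1,\ldots,x_k$ so that the $e_p$-coordinate of $[x_1,[\ldots,x_k]]$ is nonzero. This gives $z^p\notin\mathcal O_{k-\epsilon}(G)$ for all $p=(k,\alpha)$ with $2\le k\le s$. That is strictly more than the proposition states, and it is consistent with Theorem~\ref{func_summary_intro}.
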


\begin{proof}[Proof of Proposition \ref{coordi_grow}]

We divide the proof into two main steps.

\textbf{Step 1:}  We prove that $z^p \in \mathcal{O}_k(G)$ for any $1\le k \le s$, We proceed by an induction on $k$. Here $p$ denotes the pair $(k,\alpha)$ with $1 \leq \alpha \leq n_k$. 

We first show that $z^p$ has linear growth for every $p=(1,\alpha)$ with $1\leq \alpha\leq n_1$. For any $C^1$ curve $\gamma(t)=(z^1(t),\ldots,z^n(t))$ with $t\in [0,1]$ from the origin to $(z^1,\ldots,z^n)$, we have
\begin{equation}
    L_g(\gamma(t)) \geq \int_\gamma |\varphi^p| =\int_0^1 |(z^p(t))'|dt \geq \sup_{\gamma(t)} |z^p(t)|
\end{equation}
 for $(1, 1) \leq p \leq (1, n_1)$. Taking the infimum over the lengths of such curves $\gamma$, we conclude that $z^p$ has at most linear growth. Moreover, by restricting to the straight line from the origin to $(0,\ldots,z^p,\ldots,0)$, one sees that $z^p$ has exactly linear growth. 

Assume that 
\begin{equation}\label{l_control}
    \int_0^1 |(z^p(t))'|dt \le C\bigl(1+L_g(\gamma(t)\bigr)^l,\ \ \ \text{for}\ p=(l, \alpha)\ \text{with}\ 1 \leq l \leq k-1\ \text{and}\ 1\leq \alpha \leq n_{l}.
\end{equation}
Now we consider $p=(k,\cdot)$, we rewrite $(\ref{nil_coframe})$ as
\begin{equation*}
    \varphi^p= dz^p + \sum_{A=1}^{k-1}  \Xi_A.
\end{equation*}
Let $k_1,k_2,\cdots,k_A, l_1,l_2,\cdots,l_A$ be integers taking values in $\{1,2,\cdots,k-1\}$. Assume that $k_1+l_1 \le k$, $k_2+l_2 \le l_1$, $\cdots$, and $k_A+l_A \le l_{A-1}$. Then $\Xi_A$ can be written as a linear combination of terms in the following form
\begin{equation}\label{term_gen}
    c^{(k,\alpha)}_{(k_1,\alpha_1)(l_1,\beta_1)} c^{(l_1,\beta_1)}_{(k_2,\alpha_2)(l_2,\beta_2)} \cdots 
    c^{(l_{A-1},\beta_{A-1})}_{(k_A,\alpha_A)(l_A,\beta_A)}
    z^{(k_1,\alpha_1)} z^{(k_2,\alpha_2)} \cdots z^{(k_A,\alpha_A)} dz^{(l_A,\beta_A)}.
\end{equation}
Note that we use Lemma \ref{adapted_lemma} repeatedly.

Fix any $p=(d_1, \cdot)$ and $q=(d_2, \cdot)$ such that $d_1+d_2 \leq k$. According to the induction hypothesis, we have $z^p \in \mathcal{O}_{d_1}(G)$ and $z^q \in \mathcal{O}_{d_2}(G)$. Indeed, by the triangle inequality, we obtain the following estimate. Let $\gamma(t)=(z^1(t),\ldots,z^n(t))$ with $t \in [0, 1]$ be any $C^1$ curve joining the identity $e$ and $(z^1,\ldots,z^n) \in G$. By the induction hypothesis \eqref{l_control}, we have
\[
\max_{t \in [0, 1]}|z^p|(\gamma(t)) \leq \int_0^1 |(z^p(t))'|dt \leq C\bigl(1+L(\gamma(t))\bigr)^{d_1}, \ \ \int_0^1 |(z^q(t))'|dt \leq C\bigl(1+L(\gamma(t))\bigr)^{d_2}.
\]
There exists some constant $C>0$ such that
\begin{equation}
    \int_0^1 |z^p(t) (z^q(t))'| dt \le C\bigl(1+L_g(\gamma(t))\bigr)^{d_1+d_2}.
\end{equation}
Then we obtain 
\begin{equation}
    \int |\Xi_A(\gamma(t))| \le C(1+L_g(\gamma(t))^{k_1+k_2+\cdots+k_A+l_A}) \le C\bigl(1+L_g(\gamma(t))\bigr)^k.
\end{equation}
For any $p=(k, \cdot)$, we obtain
\begin{equation}
    |z^p|\le \int_0^1 |(z^p(t))'|dt  \le \int |dz^p + \sum_{A=1}^{k-1}  \Xi_A| + \sum_{A=1}^{k-1} \int |\Xi_A| \le L_g(\gamma(t)) + C\bigl(1+L_g(\gamma(t))\bigr)^k.
\end{equation}
Therefore $z^p \in \mathcal{O}_k(G)$.

It remains to prove for any $\epsilon>0$, $z^p \notin \mathcal{O}_{k-\epsilon}(G)$ for $p=(k, \alpha)$ where $2 \leq k \leq \min(3, s)$. We divide the proof into three steps. To simplify the notation in $2$-step and $3$-step cases, we use Roman letters $i,j$ for indices corresponding to $V_1$, Greek letters $\beta, \gamma$ for indices corresponding to $V_2$, and hatted Roman letters $\widehat{k},\widehat{l}$ for indices corresponding to $V_3$. We will follow the original indices $(k, 1), \ldots, (k, n_k)$ in the general $s$-step case.

\textbf{Step 2.1:} The $2$-step case.

Let $\mathfrak{g}$ be a $2$-step nilpotent complex Lie algebra. According to \eqref{nil_coframe}, the corresponding left-invariant coframe of $G$ is
\begin{equation}\label{nil_coframe_2}
    \varphi^i=dz^i,\ \ (1, 1) \leq i\leq (1, n_1);\ \ \ \  
    \varphi^{\beta}=dz^{\beta}-\frac{1}{2}c^{\beta}_{pq} z^p dz^q,   \ \ (2, 1) \leq \beta \leq (2, n_2). 
\end{equation}

\begin{comment}
Next we show $z^{\beta}$ has at most quadratic growth for $(2, 1) \leq \beta \leq (2, n_2)$. 

Recall $L_g(\gamma(t)) \geq \sup_{\gamma(t)} |z^i(t)|$ for $1\le i \le k$. Hence
\begin{equation}\label{triangle}
\begin{split}
    |z^{\beta}| &\le \int |(z^{\beta}(t))'|dt \le \int |(z^{\beta}(t))'-\frac{1}{2}c^{\beta}_{pq} z^p(t) (z^q(t))'|dt +\int |\frac{1}{2}c^{\beta}_{pq} z^p(t) (z^q(t))'|dt\\
    &\le L_g(\gamma(t))+ L_g(\gamma(t)) \int |\frac{1}{2}c^{\beta}_{pq}  (z^q(t))'|dt \le C(1+L_g(\gamma(t))^2).
\end{split}
\end{equation}
Here $C$ denotes a constant which only depends on $k_1$ and structural constants. Taking the infimum on $\gamma(t)$ again, we conclude that $z^{\beta}$ has at most quadratic growth for $(2, 1) \leq \beta \leq (2, n_2)$.    
\end{comment}

Now we show that $z^{\beta} \notin \mathcal{O}_{2-\epsilon}(G)$ for $(2, 1) \leq \beta \leq (2, n_2)$.
For fixed $\epsilon>0$, we pick an integer $b_1$ such that $10^{-b_1}<\frac{1}{2}\epsilon$. Choose integers
\(
b_1<b_2<\cdots<b_{n_1}
\)
forming an arithmetic progression with common difference $K_0$; that is,
\(
b_{j+1}-b_j=K_0
\) for $1\leq j\leq n_1-1$.
Let $a_i=1-10^{-b_i}$ for any $(1, 1) \leq i\leq (1, n_1)$. We construct a $C^1$ curve
$\gamma(t)=(z^1(t), \ldots, z^n(t))$ such that $z^i(t)=t^{a_i}$ for $(1, 1) \leq i\leq (1, n_1)$. We further choose $z^{\beta}(t)$ for $(2, 1) \leq \beta \leq (2, n_2)$ such that $\varphi^{\beta}=dz^{\beta}-\frac{1}{2}c^{\beta}_{pq} z^p dz^q=0$ along $\gamma(t)$. Along such a curve, there exist positive constants $C_1, C_2$ such that $C_1 t\le L_g(\gamma(t)) \le C_2 t$ for $t$ large. In fact, we only need to solve $\varphi^{\beta}(\gamma(t))=0$ for $t>t_0$ for some fixed $t_0>0$. For each $(2, 1) \leq \beta \leq (2, n_2)$, $z^{\beta}(t)$ solves
\begin{equation}\label{ODE_2level}
\frac{d z^{\beta}(t)}{dt}=\frac{1}{2}\sum_{1 \leq p, q \leq n_1}  c_{pq}^{\beta} a_q t^{a_p+a_q-1}=\frac{1}{2}\sum_{p<q} c_{pq}^{\beta}(a_q-a_p)t^{a_p+a_q-1},\ \  t>t_0.    
\end{equation}
By our choice of $b_i$, $1\leq i\leq n_1$, the numbers $a_p+a_q-1$ are pairwise distinct as $1\leq p<q\leq n_1$. As $V_2=[V_1, V_1] \neq \{0\}$, for any $(2, 1) \leq \beta \leq (2, n_2)$, there exists some $1 \leq p, q \leq n_1$ such that $c_{pq}^{\beta} \neq 0$. It follows that $z^{\beta}(t)$ grows faster than $t^{2-2\epsilon}$. On the other hand, since $L(\gamma(t))\sim t^{a_n}$ with $a_n<1$, we have $L(\gamma(t))<t$ for sufficiently large $t$. Hence
\(
z^{\beta}\notin \mathcal{O}_{2-\epsilon}(G).
\)

\textbf{Step 2.2:} The $3$-step case.

It follows from \eqref{nil_coframe} that the corresponding left-invariant coframe of $G$ is
\begin{equation}\label{nil_coframe_3}
\begin{cases}
    \varphi^i=dz^i\ \ \ (1, 1) \leq i\leq (1, n_1),\\
    \varphi^{\beta}=dz^{\beta}-\frac{1}{2}c^{\beta}_{pq} z^p dz^q   \ \ (2, 1) \leq \beta \leq (2, n_2),\\
    \varphi^{\widehat{k}}=dz^{\widehat{k}}-\frac{1}{2}c^{\widehat{k}}_{p \gamma} (z^p dz^{\gamma}-z^{\gamma}dz^p)-\frac{1}{2}c^{\widehat{k}}_{p q} z^p dz^{q}+\frac{1}{6}c^{\widehat{k}}_{p \gamma}c^{\gamma}_{ij} z^p z^i dz^j   \ \ (3, 1) \leq \widehat{k} \leq (3, n_3).
\end{cases}   
\end{equation}

From Step 1, we get that $z^{i}$ grows exactly linearly, $z^{\beta}$ has at most quadratic growth, and $z^{\widehat{k}}$ has at most cubic growth. In order to estimate the growth of $z^{\beta}$ and $z^{\widehat{k}}$, we choose a specific $C^1$ curve $\gamma(t)=(z^i(t), z^{\beta}(t), z^{\widehat{k}})$ such that $z^i(t)=t^{a_i}$ for $(1, 1) \leq i\leq (1, n_1)$ where $a_i$ are chosen as in the $2$-step case, except that we require $10^{-b_1}<\frac{1}{3}\epsilon$. Moreover, we assume that $\varphi^{\beta}=\varphi^{\widehat{k}}=0$ along $\gamma(t)$. It reduces to solve an ODE system
\begin{equation} \label{ODE_3level}
\begin{cases}
    \frac{d z^{\beta}(t)}{dt}=&\frac{1}{2}\sum_{p<q} c_{pq}^{\beta}(a_q-a_p)t^{a_p+a_q-1}\ \ \ \ (2, 1) \leq \beta \leq (2, n_2),\\
    \frac{d z^{\widehat{k}}(t)}{dt}=&\frac{1}{12} \sum_{i<j} c^{\widehat{k}}_{p \gamma}c^{\gamma}_{ij} (a_j-a_i)t^{a_p+a_i+a_j-1}+\frac{1}{2}\sum_{i<j}c_{ij}^{\widehat{k}} (a_j-a_i)t^{a_i+a_j-1}\\
    &-\frac{1}{2}c_{p\gamma}^{\widehat{k}}z^{\gamma}a_p\,t^{a_p-1}. 
    \ \ \ (3, 1) \leq \widehat{k} \leq (3, n_3).
\end{cases}   
\end{equation}
We solve the first equation in \eqref{ODE_3level} and plug it into the second one. Then we obtain
\begin{equation}\label{ODE_3level_3}
    \frac{d z^{\widehat{k}}(t)}{dt}=\frac{1}{12} \sum_{i<j} c^{\widehat{k}}_{p \gamma}c^{\gamma}_{ij} (a_j-a_i) (1-\frac{3}{a_i+a_j})t^{a_p+a_i+a_j-1}+\frac{1}{2}\sum_{i<j}c_{ij}^{\widehat{k}} (a_j-a_i)t^{a_i+a_j-1}.
\end{equation}
By Lemma \ref{adapted_lemma}, $V_3=L_3(V_1)$ and $V_2 \subset [V_1, V_1] \subset \mathfrak{g}_2=V_2 \oplus \mathfrak{g}_3$. Hence for any $(3, 1) \leq \widehat{k} \leq (3, n_3)$, we may find $p, i, j$ such that $\sum_{\gamma} c^{\widehat{k}}_{p \gamma}c^{\gamma}_{ij} \neq 0$. Again, by our choice of $a_i (1\leq i\leq n_1)$, the equation \eqref{ODE_3level_3} has at least one nonzero term on its right-hand side with $t^{a_p+a_i+a_j-1}$. Once the curve $\gamma(t)$ has been constructed, we can examine the growth of holomorphic functions along $\gamma(t)$. It follows from \eqref{ODE_3level} and \eqref{ODE_3level_3} that
\[
z^{\beta}\notin \mathcal{O}_{2-\epsilon}(G)
\quad\text{and}\quad
z^{\widehat{k}}\notin \mathcal{O}_{3-\epsilon}(G)
\]
for all \((2,1)\leq \beta\leq (2, n_2)\) and \((3,1)\leq \widehat{k}\leq (3, n_3)\).

\textbf{Step 2.3:}  The $s$-step case. 

The construction of the specific curve $\gamma(t)=(z^1(t), \ldots, z^n(t))$ in Steps 2.1 and 2.2 works for the general $s$-step case. Note that each of $z^i(t)$ is a polynomial of $t$. Along such a curve, we have for any $\epsilon>0$,  $z^p \notin \mathcal{O}_{k-\epsilon}(G)$ for $p=(k, \alpha)$ and where $2 \leq k \leq \min(3, s)$. Unfortunately, for $p=(k,\alpha)$ with $k \geq 4$, estimating the growth of $z^p(t)$ becomes algebraically complicated.

\end{proof}

\begin{proof}[Proof of Corollary \ref{nilpotent_alg_intro}]
    We first prove the ``if'' direction. Assume that $\widehat{H}$ in Theorem \ref{poly_space_char_intro} is nontrivial. Then we construct a left-invariant holomorphic frame $\{e\}$ along $\widehat{H} \subset G$. As in the proof of Theorem \ref{poly_space_char_intro}, for any $d\geq 0$ and $f\in\mathcal{O}_d(G)$, we have $e(f)\equiv 0$. In other words, the Jacobian of any $n$ holomorphic functions $f_1,\ldots,f_n\in\mathcal{O}_d(G)$ is everywhere degenerate on $G$. Therefore, $\widehat{H}$ is trivial and $G$ is nilpotent.
    
    The ``only if'' part is direct from Proposion \ref{coordi_grow}. 
\end{proof}

\begin{proposition}\label{balls_compare}
    Let $G$ be a simply connected complex Lie group such that its Lie algebra $\mathfrak{g}$ is $s$-step nilpotent. Recall that we define a global holomorphic coordinate $(z^1, \ldots, z^n)$ on $G$ in \eqref{exp_holo}. Let $B_{E}(\mathbf{1}_G,r)=\{ |z^1|^2+\ldots+ |z^n|^2<r^2\}$ be the coordinate ball with respect to the exponential mapping and $B_{g}(\mathbf{1}_G,r)$ the geodesic ball with respect to \eqref{nil_metric}. Then $B_{E}(\mathbf{1}_G,r) \subset B_{g}(\mathbf{1}_G,r)$.
\end{proposition}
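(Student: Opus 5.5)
The plan is to compare the two balls by joining the identity to a point through a one-parameter subgroup, and to compute the $g$-length of that curve exactly. Fix $q\in B_E(\mathbf{1}_G,r)$ and write $X=z^1(q)e_1+\cdots+z^n(q)e_n\in\mathfrak g$, so that $q=\exp(X)$ by \eqref{exp_holo} and $\sum_i|z^i(q)|^2<r^2$. Consider the curve $\gamma(t)=\exp(tX)$ for $t\in[0,1]$. In exponential coordinates it is the straight segment $z^i(t)=t\,z^i(q)$. It joins $\mathbf{1}_G$ to $q$, so $d_g(\mathbf{1}_G,q)\le L_g(\gamma)$. It therefore suffices to show that
\[
L_g(\gamma)=\Bigl(\sum_i|z^i(q)|^2\Bigr)^{1/2}<r.
\]

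The key step is to evaluate the left-invariant coframe \eqref{nil_coframe} along $\gamma$. At the point $tX$ we have $dX(\gamma'(t))=X$. Every term with $k\ge1$ in the series \eqref{series} then contains $\operatorname{ad}_{tX}^k X=0$ and vanishes. Hence the Maurer--Cartan form evaluated on $\gamma'(t)$ is exactly $X$, that is,
\[
\varphi^i(\gamma'(t))=z^i(q),\qquad 1\le i\le n,\ t\in[0,1].
\]
Equivalently, $\gamma'(t)=(L_{\gamma(t)})_\ast X$, because $\exp(tX)$ is a one-parameter subgroup generated by the left-invariant holomorphic vector field $X$. Since $\{e_i\}$ is unitary for the metric \eqref{nil_metric}, the speed of $\gamma$ is the constant $\bigl(\sum_i|\varphi^i(\gamma')|^2\bigr)^{1/2}=|X|$. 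Integrating over $[0,1]$ gives $L_g(\gamma)=|X|<r$, and so $q\in B_g(\mathbf{1}_G,r)$.

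The only point requiring care is the normalization between the Riemannian length of a real tangent vector and the Hermitian norm of its $(1,0)$ part determined by $\omega$. I would use the same convention as in Step 1 of the proof of Proposition \ref{coordi_grow}, where $L_g(\gamma)\ge\int_\gamma|\varphi^p|$ was used. Under that convention the speed is exactly $\bigl(\sum_i|\varphi^i(\gamma')|^2\bigr)^{1/2}$. A different constant factor would only rescale the comparison. I expect no serious obstacle beyond checking this convention and the vanishing of the higher-order terms in \eqref{series}, which holds because $\operatorname{ad}_X X=0$.
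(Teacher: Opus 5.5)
Your proposal is correct and follows essentially the same argument as the paper. Both evaluate the left-invariant coframe along the straight ray $t\mapsto \exp(tX)$ in exponential coordinates, where every higher-order term vanishes (you via $\operatorname{ad}_X X=0$, the paper via skew-symmetry of the structure constants $c^i_{jk}$ in the lower indices), so the $g$-length of the ray equals the Euclidean length $|X|<r$.
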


\begin{proof}
    Let $\gamma(t)=(c_1t,\ldots,c_nt)$ be a ray in $\mathfrak g$ emanating from the origin, where $\sum_{i=1}^n |c_i|^2=1$. Since the structure constants $c_{jk}^i$ are skew-symmetric in the lower indices, we have $\varphi^i=c_i,dt$ along $\gamma(t)$. It follows that $d_g(\exp(\gamma(t)), \mathbf{1}_G) \le L_{g}((\exp \circ \gamma)(t))=t$. 
\end{proof}

\begin{proof}[Proof of Theorem \ref{poly_upperbound_intro}]

   First of all, we prove the dimension upper estimate \eqref{uppbound_intro}. It suffices to work on the universal covering $(\widetilde{M}, \widetilde{g})$. In view of Theorem $\ref{poly_space_char_intro}$, we only need to consider the nilpotent case. 
   
   Recall the standard Cauchy estimate in a polydisk in $\mathbb{C}^n$. Let $f$ be holomorphic in a neighborhood of $\prod_{i=1}^n \{|z^i| \leq r_i\}$, Then for every multi-index $\alpha = (\alpha_1, \dots, \alpha_n)$,
\begin{equation}
\label{eq:polyest}
|D^\alpha f(0)| \le \frac{\alpha!}{r^\alpha}\, \sup_{|\zeta^i|=r_i, 1\leq  i\leq n} |f(\zeta)|
\end{equation}
where $\alpha! = \alpha_1! \cdots \alpha_n!$, $|\alpha|=\alpha_1 +\cdots+ \alpha_n$, $r^\alpha = r_1^{\alpha_1} \cdots r_n^{\alpha_n}$,
and $D^\alpha = \frac{\partial^{|\alpha|}}{\partial z_1^{\alpha_1} \cdots \partial z_n^{\alpha_n}}$.

Combining Proposition \ref{balls_compare} and the Cauchy estimate, we get that any $f \in \mathcal{O}_d(\widetilde{M}, \widetilde{g})$ is a polynomial of $z^{1}, \ldots, z^{n}$ of degree at most $\lfloor d \rfloor$. Then \eqref{uppbound_intro} follows since the linear map
\begin{equation}\label{lin_F_def}
    \Phi:   \mathcal{O}_d(\widetilde{M}, \widetilde{g}) \ni f \rightarrow (D^{\alpha}f(\mathbf{1}_G)) \in \mathbb{C}^N,\ \text{where}\ |\alpha| \le \lfloor d \rfloor\ \text{and}\ N=\operatorname{dim} \mathcal{O}_{\lfloor d \rfloor}(\mathbb{C}^n), 
\end{equation} is injective.

If the equality in \eqref{uppbound_intro} holds on $(M, g)$ for some $d_0 \geq 1$, it also holds on the universal covering $(\widetilde{M}, \widetilde{g})$. According to Theorem \ref{poly_space_char_intro}, $(\widetilde{M}, \widetilde{g})$ is nilpotent. Assume its Lie algebra $\mathfrak{g}$ is $s$-step nilpotent. Note that $\Phi$ defined in \eqref{lin_F_def} is a linear isomorphism for $d_0$. In terms of exponential coordinates, we may express
\begin{equation}\label{O_d_basis}
\mathcal{O}_{d_0} (\widetilde{M}, \widetilde{g})=\operatorname{span}\Bigl\{(z)^{\alpha}\coloneqq(z^1)^{\alpha_1}\cdots (z^n)^{\alpha_n} \ \ \ |\ \ \ |\alpha| \leq  \lfloor d_0 \rfloor\Bigl\}.
\end{equation}

We claim that $\mathfrak{g}_2={0}$. Suppose otherwise. Then $V_2\neq {0}$. By \eqref{O_d_basis}, for every $p=(2,\alpha)$ and every sufficiently small $\epsilon>0$, the holomorphic function 
\[
(z^p)^{\lfloor d_0\rfloor} \in  \mathcal{O}_{d_0}(\widetilde{M},\widetilde{g})  \subset \mathcal{O}_{(2-\epsilon)\lfloor d_0\rfloor}(\widetilde{M},\widetilde{g}) 
\]
However, we have $(z^p)^{\lfloor d_0\rfloor} \notin \mathcal{O}_{(2-\epsilon)\lfloor d_0\rfloor}(\widetilde{M},\widetilde{g})$ from Proposition \ref{coordi_grow}. This contradiction implies $\mathfrak{g}_2=\{0\}$. Therefore $(\widetilde{M}, \widetilde{g})$ is isometric to $\mathbb{C}^n$. Meanwhile, if $(M, g)$ is any nontrivial quotient manifold of $\mathbb{C}^n$ with the flat metric, the strict inequality $\mathcal{O}_{d} (M, g)<\mathcal{O}_{\lfloor d \rfloor}(\mathbb{C}^n)$ holds for any $d \geq 1$. We conclude that $(M, g)$ is holomorphically isometric to $\mathbb{C}^n$.

\end{proof}

\subsection{A complete characterization of \texorpdfstring{$\mathcal{O}_{d} (G)$}{O_d(G)} on nilpotent groups}\label{subsec_nilpot}

\begin{definition}[Carnot algebra {\cite[p.~254]{LeDonne_B}}]\label{Carnot_alg}
An $s$-step nilpotent Lie algebra $\mathfrak g$ is called a Carnot algebra, or a nilpotent Lie algebra with a stratification, if it admits an adapted decomposition \eqref{adapt_decomp}
such that
\begin{equation} \label{adapt_Carnot}
[V_1,V_r]=V_{r+1}, \ \  1 \leq r<s, \qquad [V_1,V_s]=\{0\}.
\end{equation} In particular, $V_r = L_r(V_1)$ for any $1 \leq r \leq s$ and $[V_i, V_j] \subseteq V_{i+j}$ for $1 \leq i+j \leq s$. Such an adapted decomposition is called a stratification.
\end{definition}

\begin{definition}[Carnot group {\cite[Definition 11.1.1 on p.~326]{LeDonne_B}}]\label{Carnot_group}
A simply connected Lie group $G$ whose Lie algebra is a Carnot algebra is called a Carnot group.
\end{definition}

Let $G$ be a simply connected complex Carnot group whose Lie algebra $\mathfrak{g}$ is an $s$-step Carnot algebra. Fix a left-invariant Hermitian metric $g$ on $G$. Recall $L_x$ is a left translation by $x \in G$. Given $V_1 \subset \mathfrak{g}$, we obtain a left-invariant subbundle $\mathcal{V} \subset TG$ such that $\mathcal{V}_x=(L_{x})_{\ast}(V_1)$. We call $\mathcal{V}$ a \emph{horizontal subbundle} and a $C^1$ curve $\gamma(t)$ with $t \in [0, 1]$ on $G$ a \emph{horizontal curve} if $\gamma'(t) \in \mathcal{V}$ for any $t \in [0, 1]$, that is to say, $\gamma'(t) \in (L_{\gamma(t)})_{\ast}(V_1)$. Now that the restriction of $g$ on $V_1 \subset \mathfrak{g}$ provides a natural left-invariant inner product $g_1$ on $\mathcal{V}$. We can define the \emph{Carnot--Carath\'eodory distance} between two points $p, q \in G$.
\begin{equation}\label{CC_def}
    d_{CC}(p, q)=\inf \Bigl\{ \int |\gamma'(t)|_{g_1} \ | \ \gamma(t)\ \text{is a horizontal curve joining}\ p\ \text{and}\ q \Bigr\}.
\end{equation}
Obviously $d_g(p,q) \le d_{CC}(p,q)$ holds for any $p, q \in G$. As a special case of a fundamental result due to Chow \cite[Theorem 4.1.8]{LeDonne_B}, $d_{CC}$ is a geodesic distance and induces the manifold topology on $G$. Now we recall the ball-box theorem on complex Carnot groups.

\begin{lemma}[{\cite[Theorem 11.2.3 on p.~337]{LeDonne_B}}]\label{ballbox}
Let $G$ be a simply connected complex Carnot group equipped with a left-invariant Hermitian metric $g$. Let $d_{CC}$ denote the Carnot--Carath\'eodory distance induced by $g$ in the sense of \eqref{CC_def}. Fix a basis $\{e_1, \dots, e_n\}$ of $\mathfrak{g}$ adapted to the stratification $V_1 \oplus \cdots \oplus V_s$. Let $\exp: \mathfrak{g} \to G$ be the exponential map and $(z^1, \ldots, z^n)$ be the corresponding global coordinates with respect to $\{e_1, \cdots, e_n\}$, 
\begin{equation*}
\exp\left(\sum_{j=1}^{n} z^j(q) e_j\right)=q \in G.
\end{equation*}
According to \eqref{adapted_index}, we assume that each index $1 \leq i \leq n$ can be written as $(d_i, \alpha_{i})$ where $1 \leq \alpha_i \leq n_{d_i}$. Here integers $d_i$ with $1\leq i \leq n$ exhaust all integers from $1$ to $s$. Let $\mathtt{Box}(r)$ denote the following anisotropic polydisk with respect to $\{e_1, \ldots, e_n\}$.
\begin{equation*}
\mathtt{Box}(r) \coloneqq \left\{ (z_1, \dots, z_n) \in \mathbb{C}^n\ \ |\ \ |z_j| < r^{d_j} \right\}, \quad r > 0.
\end{equation*}
Then, there exists a constant $C > 1$ such that for all $r > 0$,
\begin{equation}\label{geo_control}
B_{CC}(\mathbf{1}_G, \frac{r}{C}) \subset \exp(\mathtt{Box}(r)) \subset B_{CC}(\mathbf{1}_G, C r),
\end{equation}
where $B_{CC}(\mathbf{1}_G, R)$ denotes the $d_{CC}$ metric ball of radius $R$ centered at the identity element $\mathbf{1}_G \in G$. 
\end{lemma}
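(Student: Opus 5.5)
The plan is to reduce the statement to the single scale $r=1$ by using the dilations of the stratified algebra, and then settle $r=1$ by a compactness and topology argument resting on Chow's theorem quoted above.

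\textbf{Step 1: dilations.} For real $\lambda>0$, define a complex-linear map $\delta_\lambda:\mathfrak g\to\mathfrak g$ by $\delta_\lambda(X)=\lambda^k X$ for $X\in V_k$. Since the decomposition is a stratification, Definition \ref{Carnot_alg} gives $[V_i,V_j]\subseteq V_{i+j}$, with the convention $V_m=\{0\}$ for $m>s$. Hence
\[
[\delta_\lambda X,\delta_\lambda Y]=\lambda^{i+j}[X,Y]=\delta_\lambda[X,Y]
\quad\text{for } X\in V_i,\ Y\in V_j,
\]
so $\delta_\lambda$ is a Lie algebra automorphism. Because $G$ is simply connected, $\delta_\lambda$ integrates to a holomorphic group automorphism $\Delta_\lambda$ of $G$ with $\Delta_\lambda\circ\exp=\exp\circ\delta_\lambda$. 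In the exponential coordinates this reads $z^j\mapsto\lambda^{d_j}z^j$, since the basis $\{e_j\}$ is adapted to the stratification. In particular $\exp(\mathtt{Box}(r))=\Delta_r\bigl(\exp(\mathtt{Box}(1))\bigr)$.

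\textbf{Step 2: $\Delta_\lambda$ scales $d_{CC}$ by $\lambda$.} Being an automorphism, $\Delta_\lambda$ commutes with left translations in the sense that $\Delta_\lambda\circ L_x=L_{\Delta_\lambda(x)}\circ\Delta_\lambda$. Consequently $(\Delta_\lambda)_\ast\bigl((L_x)_\ast v\bigr)=(L_{\Delta_\lambda x})_\ast(\lambda v)$ for $v\in V_1$. So $\Delta_\lambda$ maps the horizontal subbundle $\mathcal V$ to itself and multiplies $g_1$-lengths of horizontal vectors exactly by $\lambda$. It therefore maps horizontal curves to horizontal curves, with length multiplied by $\lambda$. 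Applying the same argument to $\Delta_{1/\lambda}=\Delta_\lambda^{-1}$ gives
\[
d_{CC}(\Delta_\lambda p,\Delta_\lambda q)=\lambda\, d_{CC}(p,q),
\qquad\text{hence}\qquad
\Delta_\lambda\bigl(B_{CC}(\mathbf 1_G,R)\bigr)=B_{CC}(\mathbf 1_G,\lambda R).
\]
Combined with Step 1, \eqref{geo_control} for all $r>0$ follows from the case $r=1$ by applying $\Delta_r$.

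\textbf{Step 3: the case $r=1$.} This is where Chow's theorem \cite[Theorem 4.1.8]{LeDonne_B} enters: $d_{CC}$ is finite and induces the manifold topology. For the outer inclusion, the closure of $\exp(\mathtt{Box}(1))$ is compact, because $\exp$ is a biholomorphism for nilpotent simply connected $G$. The function $d_{CC}(\mathbf 1_G,\cdot)$ is continuous, so it is bounded on this compact set by some $C_0$, giving $\exp(\mathtt{Box}(1))\subset B_{CC}(\mathbf 1_G,C_0+1)$. For the inner inclusion, $\exp(\mathtt{Box}(1))$ is an open neighbourhood of $\mathbf 1_G$. Since $d_{CC}$-balls form a neighbourhood basis, some $B_{CC}(\mathbf 1_G,\varepsilon)$ is contained in it. Taking $C>\max(C_0+1,1/\varepsilon)$ gives \eqref{geo_control} at $r=1$, and by Step 2 for all $r$.

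The only genuinely nontrivial input is Chow's theorem: finiteness of $d_{CC}$, which requires that $V_1$ Lie-generates $\mathfrak g$, together with its compatibility with the manifold topology. This is quoted, so the main care needed is in Step 2. There one must check that $\Delta_\lambda$ is exactly homogeneous on horizontal lengths; this uses that $V_1$ is an eigenspace of $\delta_\lambda$, and that $g_1$ is left-invariant even though $\Delta_\lambda$ is not an isometry of $g$.
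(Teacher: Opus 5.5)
Your proposal is correct. The paper gives no proof of its own and simply cites Le Donne's Theorem 11.2.3; your argument is the standard proof of this ball-box statement for Carnot groups: the dilations $\Delta_\lambda$ are $\lambda$-homotheties of $d_{CC}$ fixing $\mathbf{1}_G$, which reduces everything to the scale $r=1$, and that case follows from Chow's theorem together with compactness of the closed unit polydisk. Step 1 is exactly where the stratification, rather than a mere adapted decomposition as in \eqref{adapt_decomp}, is needed, since otherwise $\delta_\lambda$ need not be a Lie algebra automorphism.
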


In our application, it is crucial that the inequality \eqref{geo_control} holds on a Carnot group at every scale $r>0$. Combining the Cauchy estimate, we get a dimension estimate on holomorphic functions of polynomial growth with respect to $d_{CC}$. 
\begin{proposition}\label{Odcc_char}
Let $G$ be a simply connected complex Carnot group and $\mathcal{O}_d^{CC} (G)$ denote the space of all $f \in \mathcal{O}(G)$ which are of polynomial growth order at most $d$ with respect to $d_{CC}$. Then
\begin{equation}\label{O_dcc_basis}
\mathcal{O}_d^{CC} (G) \subset \operatorname{span}\Bigl\{ (z^1)^{\alpha_1}\cdots (z^n)^{\alpha_n} \ \ \ |\ \ \ \alpha_1 d_1+\alpha_2 d_2 +\cdots+\alpha_n d_n \le d  \Bigl\}.
\end{equation}
\end{proposition}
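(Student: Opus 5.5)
The plan is to combine the ball-box theorem (Lemma \ref{ballbox}) with the Cauchy estimate \eqref{eq:polyest} on anisotropic polydisks, exactly as in the proof of Theorem \ref{poly_upperbound_intro}, but now using weighted radii $r_j=r^{d_j}$ instead of isotropic ones. Since $\exp:\mathfrak g\to G$ is a biholomorphism for simply connected nilpotent $G$, every $f\in\mathcal{O}(G)$ may be regarded as an entire function of $(z^1,\ldots,z^n)$ and expanded as a convergent power series $f=\sum_\alpha a_\alpha z^\alpha$ on $\mathbb{C}^n$, with $a_\alpha=D^\alpha f(0)/\alpha!$. It therefore suffices to show that $a_\alpha=0$ whenever $\alpha_1d_1+\cdots+\alpha_nd_n>d$.

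Fix $f\in\mathcal{O}_d^{CC}(G)$, so that $|f(q)|\le C(1+d_{CC}(q,\mathbf 1_G))^d$. For $r>0$, the closed polydisk $\prod_j\{|z^j|\le r^{d_j}/2\}$ lies in $\mathtt{Box}(r)$, so by the second inclusion in \eqref{geo_control} its image under $\exp$ lies in $B_{CC}(\mathbf 1_G,Cr)$. Hence on its distinguished boundary we have $|f|\le C'(1+Cr)^d$. Applying \eqref{eq:polyest} with $r_j=r^{d_j}/2$ gives
\[
|a_\alpha|\le \frac{2^{|\alpha|}}{r^{\alpha_1d_1+\cdots+\alpha_nd_n}}\,C'(1+Cr)^d .
\]
Letting $r\to\infty$, the right-hand side tends to $0$ whenever $\sum_j\alpha_jd_j>d$, so those coefficients vanish. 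Thus $f$ is a finite linear combination of the monomials $z^\alpha$ with weighted degree $\sum\alpha_jd_j\le d$, which is \eqref{O_dcc_basis}.

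The main point requiring care is that the estimate must hold at every scale $r$, including large $r$. This is exactly what Lemma \ref{ballbox} provides on a Carnot group: homogeneity under the dilations $\delta_\lambda(z^j)=\lambda^{d_j}z^j$, which are automorphisms precisely because of the stratification $[V_i,V_j]\subseteq V_{i+j}$, makes the constant $C$ scale-independent. A secondary point is that the coordinates in Lemma \ref{ballbox} must be the exponential coordinates of a basis adapted to the stratification, which is the same coordinate system in which we expand $f$. Only the inclusion $\exp(\mathtt{Box}(r))\subset B_{CC}(\mathbf 1_G,Cr)$ is needed for this direction. The reverse inclusion, namely that each such monomial actually lies in $\mathcal{O}_d^{CC}(G)$, follows from the first inclusion in \eqref{geo_control} but is not required for the stated containment.
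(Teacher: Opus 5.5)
Your proposal is correct and follows essentially the same argument as the paper. Both apply Cauchy estimates on the anisotropic polydisks with radii of order $r^{d_j}$, bound the supremum using the inclusion $\exp(\mathtt{Box}(r))\subset B_{CC}(\mathbf{1}_G,Cr)$ from Lemma \ref{ballbox}, and let $r\to\infty$. Your choice of radii $r^{d_j}/2$, so that the closed polydisk lies inside the open box, is a small point of care that the paper leaves implicit.
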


\begin{proof}[Proof of Proposition \ref{Odcc_char}]
Given any $f \in \mathcal{O}_d^{CC} (G)$, we apply the Cauchy estimate on the polydisk $\prod_{j=1}^n \{|z^j|<r^{d_j}\}$. It follows from $(\ref{geo_control})$ that
\begin{equation}
    |D^\alpha f(\mathbf{1}_G)| \le \frac{\alpha!}{r^\alpha}\, \sup_{B_{CC}(\mathbf{1}_G, C r)} |f|  \le C\frac{(1+r)^d}{r^{\alpha_1 d_1+\alpha_2 d_2 +\cdots+\alpha_n d_n}}.
\end{equation}
Then $D^\alpha f(\mathbf{1}_G)=0$ for any multi-index $\alpha$ satisfies $\alpha_1 d_1+\alpha_2 d_2 +\cdots+\alpha_n d_n>d$.
\end{proof}

Note that $\mathcal{O}_d(G) \subset \mathcal{O}_d^{CC} (G)$. On the other hand, it follows from Proposition \ref{coordi_grow} that 
\[
(z^1)^{\alpha_1}(z^2)^{\alpha_2}\cdots(z^n)^{\alpha_n} \in \mathcal{O}_d(G)\ \ \text{if}\ \alpha_1 d_1+\alpha_2 d_2 +\cdots+\alpha_n d_n \leq d. 
\]
Therefore, we have determined $\mathcal{O}_d(G)$ for any simply connected complex Carnot group.

\begin{corollary}\label{O_d_Carnot}
Let $G$ be a simply connected complex Carnot group with a left-invariant Hermitian metric $g$. Let $z^1, \ldots, z^n$ be the exponential coordinates and $d_1, \ldots, d_n$ as in  Lemma \ref{ballbox}. Then for any $d \ge 0$, 
    \begin{equation}
        \mathcal{O}_d(G)=
        \operatorname{span} \Bigl\{(z^1)^{\alpha_1}(z^2)^{\alpha_2}\cdots(z^n)^{\alpha_n}\ \ | \ \ \ \alpha_1 d_1+\alpha_2 d_2 +\cdots+\alpha_n d_n \le d \Bigr\}.
    \end{equation}    
\end{corollary}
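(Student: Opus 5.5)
We have to prove the two inclusions separately. The inclusion $\mathcal{O}_d(G)\subset\operatorname{span}\{\cdots\}$ comes from the Cauchy estimate on anisotropic polydisks, exactly as in Proposition \ref{Odcc_char}. The reverse inclusion comes from the growth bounds on the exponential coordinates in Step 1 of the proof of Proposition \ref{coordi_grow}.

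\emph{Upper inclusion.} First compare the two distances. Since the horizontal subbundle is a subbundle of $TG$ and $g_1$ is the restriction of $g$, every horizontal curve has the same $g$-length as $g_1$-length. Hence $d_g(\mathbf{1}_G,q)\le d_{CC}(\mathbf{1}_G,q)$ for every $q\in G$. If $f\in\mathcal{O}_d(G)$, then
\[
|f(q)|\le C\bigl(1+d_g(\mathbf{1}_G,q)\bigr)^d\le C\bigl(1+d_{CC}(\mathbf{1}_G,q)\bigr)^d,
\]
so $f\in\mathcal{O}^{CC}_d(G)$. Proposition \ref{Odcc_char} then shows that $f$ lies in the stated span. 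Some care is needed here. That proposition rests on the ball--box inclusion \eqref{geo_control} at every scale $r$, which holds because $G$ is Carnot and therefore admits dilations $\delta_r$ with $\delta_r^{\ast}z^j=r^{d_j}z^j$. Letting $r\to\infty$ kills every Taylor coefficient $D^\alpha f(\mathbf{1}_G)$ with $\sum\alpha_id_i>d$. Since $f$ is entire in the global coordinates $z^j$, its Taylor series at $\mathbf{1}_G$ converges on all of $G$, so $f$ is a polynomial in the stated span.

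\emph{Lower inclusion.} Step 1 of the proof of Proposition \ref{coordi_grow} gives $z^p\in\mathcal{O}_k(G)$ for every $p=(k,\alpha)$. This holds for any $s$-step nilpotent group with an adapted basis, in particular for the stratification basis used here. Note that $\mathcal{O}_a(G)\cdot\mathcal{O}_b(G)\subset\mathcal{O}_{a+b}(G)$, since $(1+r)^a(1+r)^b=(1+r)^{a+b}$. Therefore the monomial $(z^1)^{\alpha_1}\cdots(z^n)^{\alpha_n}$ lies in $\mathcal{O}_{\sum\alpha_id_i}(G)\subset\mathcal{O}_d(G)$ whenever $\sum\alpha_id_i\le d$. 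As $\mathcal{O}_d(G)$ is a vector space, the whole span is contained in it.

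\emph{Metric independence.} The corollary allows an arbitrary left-invariant Hermitian metric $g$, while Proposition \ref{coordi_grow} was proved for the metric \eqref{nil_metric} adapted to the basis. Any two left-invariant metrics are quasi-isometric, so $\mathcal{O}_d(G)$ does not depend on the choice. For the same reason, a change of $g$ only changes $d_{CC}$ by a bi-Lipschitz factor, which is absorbed into the constant $C$ of Lemma \ref{ballbox}.

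The main obstacle I expect is the upper inclusion, specifically making sure that the ball--box comparison is uniform at all large scales; this is exactly where the Carnot structure and the dilations are essential. The remaining steps are bookkeeping.
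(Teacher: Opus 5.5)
Your proposal is correct and follows the paper's own argument. One direction combines the inclusion $\mathcal{O}_d(G)\subset\mathcal{O}^{CC}_d(G)$ (from $d_g\le d_{CC}$) with the anisotropic Cauchy estimate of Proposition \ref{Odcc_char}, and the other uses $z^p\in\mathcal{O}_k(G)$ from Step 1 of Proposition \ref{coordi_grow} multiplied out over monomials; your remarks on dilations (for the all-scale ball--box inclusion) and on metric independence just make explicit what the paper leaves implicit.
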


\begin{definition}[Associated Carnot algebra {\cite[Definition 9.2.16]{LeDonne_B}}]\label{ass_carnot}
Let $G$ be a simply connected Lie group with its Lie algebra $\mathfrak{g}$ being $s$-step nilpotent. Consider the lower central series of $\mathfrak{g}$
\begin{equation}
    \mathfrak{g}=\mathfrak{g}_1 \supset \mathfrak{g}_2 \supset \cdots \supset\mathfrak{g}_s \supset \mathfrak{g}_{s+1}=\{0\},\ \ \  \text{with}\ \mathfrak{g}_{k+1}=[\mathfrak{g}, \mathfrak{g}_k],\ k \geq 1.
\end{equation}
The Carnot algebra $\mathfrak{g}_{\infty}$ is defined by
\(
\mathfrak{g}_{\infty} =\bigoplus_{j=1}^s \mathfrak{g}_j /\mathfrak{g}_{j+1},
\)
equipped with the Lie bracket $[[ \cdot, \cdot]]_{\infty}$ such that
\begin{equation}\label{bracket_infinity}
    [[X+\mathfrak{g}_{i+1}, Y+\mathfrak{g}_{j+1}]]_{\infty}=[X, Y]+\mathfrak{g}_{i+j+1}.
\end{equation}
Note that $\mathfrak{g}_{\infty}$ has a natural stratification (compare \eqref{adapt_decomp} and Definition \ref{Carnot_alg}) such that its lower central series $\mathfrak{g}_{\infty}=(\mathfrak{g}_{\infty})_1 \supset (\mathfrak{g}_{\infty})_2 \supset \cdots (\mathfrak{g}_{\infty})_s \supset (\mathfrak{g}_{\infty})_{s+1}=\{0\}$ satisfies
\begin{align*}
(\mathfrak{g}_{\infty})_i=(V_{\infty})_i \oplus \ldots  \oplus (V_{\infty})_s,\ \ \text{where}\
(V_{\infty})_i=\mathfrak{g}_i /\mathfrak{g}_{i+1}, \ \ 1 \leq i \leq s.
\end{align*}
In particular, we have $[(V_{\infty})_i, (V_{\infty})_j] \subset (V_{\infty})_{i+j}$ for any $1 \leq i+j \leq s$.
\end{definition}

Let $G$ be a simply connected Lie group with an $s$-step nilpotent Lie algebra with a left-invariant Riemannian metric $g$.
The associated Carnot algebra ${\mathfrak{g}}_{\infty}$ satisfies $(V_{\infty})_1=\mathfrak{g}_1 /\mathfrak{g}_2=\mathfrak{g} /[\mathfrak{g},\mathfrak{g}]$, which is the abelianization of $\mathfrak{g}$. Let $\pi_{ab}$ denote the corresponding quotient map $\mathfrak{g} \rightarrow \mathfrak{g} /[\mathfrak{g},\mathfrak{g}]$. We introduce a quotient norm (called Pansu limit norm in {\cite[Section 12.4]{LeDonne_B}}) on $(V_{\infty})_1$ as follows.
\begin{equation}\label{ab_norm}
    \|v\|_{ab}=\min \bigl\{  \|w\|_g\ \ |\ \ w \in \mathfrak{g},\ \pi_{ab}(w)=v  \bigr\},\ \ v \in (V_{\infty})_1.
\end{equation}
Note that $(V_{\infty})_1$ generates $\mathfrak{g}_{\infty}$ after repeated applications of Lie bracket. Let $G_{\infty}$ be the simply connected Carnot group with its Lie algebra $\mathfrak{g}_{\infty}$. Note that $(V_{\infty})_1$ generates a natural horizontal subundle on $G_{\infty}$. From \eqref{ab_norm}, we may introduce a Carnot-Carath\'eodory distance function $d_{\infty}$ (called Pansu limit metric in {\cite[Section 12.4]{LeDonne_B}}) on $G_{\infty}$. 

Given a simply connected nilpotent Lie group $G$ with a left-invariant Riemannian metric $g$, then $g$ induces a Riemannian distance $d_g$ on G. There is a remarkable connection between $d_{CC}$ on $G$ and $d_{\infty}$ on $G_{\infty}$. Namely, Pansu \cite{Pansu1983} (see also \cite{BL2013}) proved that $(G_{\infty}, d_{\infty})$ is the asymptotic cone of $(G, d_{g})$. We refer to \cite[Section 12]{LeDonne_B} for more information. Now we assume that $\mathfrak{g}$, the Lie algebra of $G$, is $s$-step nilpotent. Consider its associated Carnot algebra ${\mathfrak{g}}_{\infty}$ and the corresponding Carnot group $G_{\infty}$. Assume that both $\mathfrak{g}$ and ${\mathfrak{g}}_{\infty}$ admit adapted decompositions as follows.
\begin{align*}
&\mathfrak{g}=V_1 \oplus \cdots \oplus V_s,\ \ \  [V_i, V_j] \subset V_{i+j}\oplus V_{i+j+1}\oplus\cdots\oplus V_s,\ \ \ 1 \leq i+j \leq s .\\
&\mathfrak{g}_{\infty}=(V_{\infty})_1 \oplus \cdots  \oplus (V_{\infty})_s,\ \ \  [[(V_{\infty})_i, (V_{\infty})_j]]_{\infty} \subset (V_{\infty})_{i+j},\ \ \ \ 1 \leq i+j \leq s.
\end{align*}
Moreover, for each $1\leq i\leq s$, the vector spaces $V_i$ and $(V_{\infty})_i$ are isomorphic. Let $\rho: \mathfrak{g} \rightarrow \mathfrak{g}_{\infty}$ denote a linear isomorphism which preserves the above decomposition. Recall the exponential maps $\exp_{G}: \mathfrak{g} \rightarrow G$ and $\exp_{G_{\infty}}: \mathfrak{g}_{\infty} \rightarrow G_{\infty}$ are global diffeomorphisms. We have a global diffeomorphism $\mathcal{F}: G \rightarrow G_{\infty}$ such that
\begin{equation}\label{def_F_cal}
\mathcal{F}(\exp_{G}(X))=\exp_{G_{\infty}}(\rho(X)),\ \ \ X \in \mathfrak{g}.    
\end{equation}

The following result shows under this above identification $\mathcal{F}$, the distance from identity on $G$ and $G_{\infty}$ are equivalent.

\begin{lemma}[Guivarc'h {\cite{Guivarch1973}, see also \cite[Corollary 12.4.4]{LeDonne_B}}]
     Let $G$ be a simply connected nilpotent Lie group $G$ with a left-invariant Hermitian metric $g$. Let $\mathcal{F}$ be the diffeomorphism defined in \eqref{def_F_cal}, and $\mathbf{1}_G$ and $\mathbf{1}_{G_{\infty}}$ be the identity elements of $G$ and $G_{\infty}$ respectively. There exists $C>1$ such that for any $x\in G$,
    \begin{equation}\label{equiva_identity}
        \frac{1}{C} d_{\infty}(\mathbf{1}_{G_{\infty}}, \mathcal{F}(x))-C \le d_{g}(\mathbf{1}_G,x) \le C d_{\infty}(\mathbf{1}_{G_{\infty}}, \mathcal{F}(x))+C.
    \end{equation}
\end{lemma}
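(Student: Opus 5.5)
The plan is to compare both distances with a homogeneous quasi-norm built from exponential coordinates. Identify $G$ and $G_\infty$ with $\mathbb{C}^n$ through $\exp_G$ and $\exp_{G_\infty}$, using the same adapted basis $\{e_i\}$ of $\mathfrak g$ and its image $\{\rho(e_i)\}$. Then $\mathcal F$ becomes the identity on coordinates. On both spaces introduce
\[
|z|_{\mathrm{hom}}=\sum_{i=1}^n |z^i|^{1/d_i}.
\]
By Lemma \ref{ballbox} applied to $G_\infty$ (which is Carnot, so $\exp(\mathtt{Box}(r))$ is comparable with $B_{CC}$ at every scale), $d_\infty(\mathbf 1,\mathcal F(x))\simeq |z(x)|_{\mathrm{hom}}$ with multiplicative constants. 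The statement therefore reduces to showing that $d_g(\mathbf 1_G,x)$ is comparable to $|z(x)|_{\mathrm{hom}}$ up to multiplicative and additive constants.

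\textbf{Upper bound on the quasi-norm.} Step 1 of Proposition \ref{coordi_grow} already gives $|z^p|\le C(1+d_g)^{k}$ for $p=(k,\alpha)$. Hence $|z|_{\mathrm{hom}}\le C(1+d_g(\mathbf 1_G,x))$, and so
\[
d_\infty(\mathbf 1_{G_\infty},\mathcal F(x))\le C\,d_g(\mathbf 1_G,x)+C,
\]
which is the left inequality in \eqref{equiva_identity}.

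\textbf{Upper bound on $d_g$.} This is the main obstacle: we need $d_g(\mathbf 1_G,x)\le C|z(x)|_{\mathrm{hom}}+C$. I would argue by induction on the step $s$. Write $x=\exp(X)$ with $X=X_1+\dots+X_s$ and $X_k\in V_k$.

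By Lemma \ref{adapted_lemma}, $V_k\subseteq L_k(V_1)$, so each $X_k$ can be produced from iterated brackets of $V_1$ elements of size about $|X_k|^{1/k}$. Using a group commutator of length $O(|X_k|^{1/k})$, built from $\exp$ of $V_1$ elements, one realises $\exp(X_k)$ modulo $\exp(\mathfrak g_{k+1})$, with an error whose homogeneous size is controlled. The standard approach is to use the Baker–Campbell–Hausdorff formula: the product of such words reproduces $x$ up to an element of $\exp(\mathfrak g_{k+1})$ with controlled homogeneous norm, and we then iterate down the filtration. A cleaner alternative, which I would try first, is a scaling argument: define $\delta_t$ on $\mathfrak g$ by $t^k$ on $V_k$. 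The conjugated brackets $[\cdot,\cdot]_t=\delta_t^{-1}[\delta_t\cdot,\delta_t\cdot]$ converge to $[[\cdot,\cdot]]_\infty$ as $t\to\infty$, by \eqref{bracket_infinity} and Lemma \ref{adapted_lemma}. For $t$ large, the group laws on $\mathbb C^n$ are therefore uniformly close to the law of $G_\infty$. A compactness argument on the unit quasi-sphere $\{|z|_{\mathrm{hom}}=1\}$ then shows that every point $\delta_t(z)$ can be joined to the identity by a curve of $g$-length at most $Ct$. The key point is that a $d_\infty$-geodesic is horizontal for $V_1$, so its left-invariant $g$-length is computed from the $V_1$-components; transporting it via $\delta_t$ yields a curve in $G$ that closes up only approximately. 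The resulting endpoint defect has homogeneous size $o(t)$, and it is corrected by the induction on the filtration.

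Finally, combine the two inequalities with the ball-box comparison on $G_\infty$ to obtain \eqref{equiva_identity}. The additive constant $C$ absorbs the bounded region where $t$ is not yet large enough for the compactness argument to apply.
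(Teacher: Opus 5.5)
The paper gives no argument for this lemma; it simply imports it from Guivarc'h and from Le Donne's book, where it appears alongside Pansu's theorem that $(G_\infty,d_\infty)$ is the asymptotic cone of $(G,d_g)$. Your proposal is a different route: a self-contained proof of the coarse statement, along the lines of Guivarc'h's original argument, and it is sound in outline.

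Your reduction works as stated.
\begin{itemize}
\item In exponential coordinates $\mathcal F$ is the identity. The basis $\{\rho(e_i)\}$ is adapted to the stratification of $\mathfrak g_\infty$, and $\|\cdot\|_{ab}$ is a Hermitian norm. Hence Lemma~\ref{ballbox} applies to $(G_\infty,d_\infty)$ and gives $d_\infty(\mathbf 1_{G_\infty},\mathcal F(x))\simeq|z(x)|_{\mathrm{hom}}$ at every scale.
\item Step~1 of Proposition~\ref{coordi_grow} only uses $[V_i,V_j]\subset\mathfrak g_{i+j}$, so it holds for any adapted decomposition. It gives the left inequality.
\item So only $d_g(\mathbf 1_G,x)\le C|z(x)|_{\mathrm{hom}}+C$ needs new work.
\end{itemize}
Compared with the citation, your argument uses only tools already in the paper plus Baker--Campbell--Hausdorff. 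It yields exactly the quasi-isometry needed for $\mathcal F^*$, though not the sharper asymptotics of Pansu's theorem, which the paper never uses.

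For the remaining inequality, rely on the commutator peeling. Run it as a downward induction on $k$ of the statement
\[
P_k:\quad d_g(\mathbf 1_G,y)\le C_k(|y|_{\mathrm{hom}}+1)\ \text{ for } y\in\exp(\mathfrak g_k),
\]
with $P_{s+1}$ trivial. The inductive step goes as follows.
\begin{itemize}
\item Take $y\in\exp(\mathfrak g_k)$ with $V_k$-component $Y$. Fix a basis of $\mathfrak g_k/\mathfrak g_{k+1}$ made of brackets $[u_1,[\dots,u_k]]$ with $u_i\in V_1$.
\item Use the identity $\log[\exp(\lambda u_1),[\dots,\exp(\lambda u_k)]]\equiv\lambda^k[u_1,[\dots,u_k]]\bmod\mathfrak g_{k+1}$. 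It produces a word $w\in\exp(\mathfrak g_k)$ with $\log w\equiv Y\bmod\mathfrak g_{k+1}$ and $d_g(\mathbf 1_G,w)\le C(|Y|^{1/k}+1)$.
\item Then $w^{-1}y\in\exp(\mathfrak g_{k+1})$. Its homogeneous size is controlled by the quasi-triangle inequality $|ab|_{\mathrm{hom}}\le C(|a|_{\mathrm{hom}}+|b|_{\mathrm{hom}}+1)$, which follows from BCH and the weight filtration, together with the easy direction applied to $w$. Now apply $P_{k+1}$ to $w^{-1}y$.
\end{itemize}
Peel the top component of the current remainder, not the original $X_k$, since $\exp X\neq\exp X_1\cdots\exp X_s$.

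The scaling sketch, which you said you would try first, is circular as written. The defect $\gamma(1)^{-1}x$ lies in $\exp(\mathfrak g_2)$, because the $V_1$-components match exactly. ``Correcting it by induction on the filtration'' then requires the very estimate being proved. It can be repaired by a bootstrap. Set $D(R)=\sup\{d_g(\mathbf 1_G,x):|z(x)|_{\mathrm{hom}}\le R\}$. The uniform $o(t)$ bound on the defect gives $D(R)\le CR+D(R/2)$ for $R\ge R_0$, hence $D(R)\le 2CR+D(R_0)$.
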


In our case, $G$ and $G_{\infty}$ are complex Lie groups. The above $\mathcal{F}$ is a biholomorphsim. We conclude that $\mathcal{F}^{\ast}: \mathcal{O}_{d}(G_{\infty}, d_{\infty}) \rightarrow \mathcal{O}_{d}(G)$ is a linear isomorphism. On the other hand, Proposition \ref{Odcc_char} characterizes $\mathcal{O}_{d}(G_{\infty}, d_{\infty})$. Together with Theorem \ref{poly_space_char_intro} and Proposition \ref{coordi_grow}, this yields the following result for $\mathcal{O}_d(G)$.

\begin{theorem}\label{func_Lie_summary}
    Let $(G, g)$ be a simply connected complex Lie group equipped with a left-invariant Hermitian metric. Suppose that the lower central series of its Lie algebra $\mathfrak{g}$ stabilizes at the $s$-th term $\mathfrak{g}_s$. Let $\widehat{H}$ be the connected Lie subgroup of $G$ with Lie algebra $\mathfrak{g}_s$. Similarly, let $\widehat{H}_k$ denote the connected Lie subgroup generated by $\mathfrak{g}_k$. Then we have the following conclusions.
    \begin{enumerate}
        \item $\mathcal{O}_d (G)$ is isomorphic to $\mathcal{O}_d (G/\widehat{H})$.
        \item $\widehat{G}=G/\widehat{H}$ is a simply connected complex Lie group and its Lie algebra is $s$-step nilpotent. Assume $\operatorname{dim}\widehat{G}=n$. We choose an adapted basis $\{e_1, \ldots, e_n\}$ of its Lie algebra in the sense of \eqref{adapt_decomp}. According to \eqref{adapted_index}, we assume that each index $1 \leq i \leq n$ can be written as $(d_i, \alpha_{i})$ where $1 \leq \alpha_i \leq n_{d_i}$. Here these integers $d_i$ with $1\leq i \leq n$ exhaust all integers from $1$ to $s$. Let $\exp: \mathfrak{g} \to G$ be the exponential map and $(z^1, \ldots, z^n)$ be the corresponding global coordinates with respect to $\{e_1, \cdots, e_n\}$. Then
        \begin{equation}
             \mathcal{O}_d(\widehat{G})=
        \operatorname{span} \Bigl\{(z^1)^{\alpha_1}(z^2)^{\alpha_2}\cdots(z^n)^{\alpha_n}\ \ | \ \ \ \alpha_1 d_1+\alpha_2 d_2 +\cdots+\alpha_n d_n \le d \Bigr\}.
        \end{equation}
       \item $\mathcal{O}_d (G)$ is isomorphic to $\mathcal{O}_d (G/\widehat{H}_k)$ where $k=\lfloor d \rfloor +1$.
    \end{enumerate}
\end{theorem}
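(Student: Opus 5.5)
Parts (1), (2) and (3) are proved in that order, with (1) feeding (2) and both feeding (3). Part (1) is exactly Theorem \ref{poly_space_char_intro}. For (2), we first note that $\widehat{G}=G/\widehat{H}$ is simply connected, by the homotopy sequence argument of Lemma \ref{liegrouplem}. Its Lie algebra is $\mathfrak{g}/\mathfrak{g}_s$, whose lower central series is $\mathfrak{g}_k/\mathfrak{g}_s$ and terminates at zero, so it is nilpotent. We then fix an adapted basis of this nilpotent algebra as in \eqref{adapt_decomp} (with its own step count).

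For (2), the inclusion ``$\supseteq$'' comes from Proposition \ref{coordi_grow}. Each $z^{i}$ lies in $\mathcal{O}_{d_i}(\widehat G)$, and growth orders add under products. Hence $(z^1)^{\alpha_1}\cdots(z^n)^{\alpha_n}\in\mathcal{O}_{\sum\alpha_i d_i}(\widehat G)\subset\mathcal{O}_d(\widehat G)$.

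For ``$\subseteq$'' we pass to the associated Carnot group $\widehat G_\infty$ via the biholomorphism $\mathcal{F}$ of \eqref{def_F_cal}. The isomorphism $\rho$ preserves the decomposition and matches the chosen basis with a stratified basis of $\mathfrak{g}_\infty$. Consequently $\mathcal{F}$ is the identity in exponential coordinates, and the monomials $z^\alpha$ on $\widehat G$ correspond to the same monomials on $\widehat G_\infty$. Let $f\in\mathcal{O}_d(\widehat G)$. The right half of Guivarc'h's estimate \eqref{equiva_identity}, $d_g\le C d_\infty+C$, shows that $f\circ\mathcal{F}^{-1}\in\mathcal{O}^{CC}_d(\widehat G_\infty)$. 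Proposition \ref{Odcc_char}, which combines the ball-box Lemma \ref{ballbox} with the Cauchy estimate on anisotropic polydisks, then shows that $f$ is a polynomial of weighted degree at most $d$.

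For (3), set $k=\lfloor d\rfloor+1$. If $k\ge s$ then $\mathfrak{g}_k=\mathfrak{g}_s$ and the claim is (1), so assume $k<s$. Then $\widehat H\subset \widehat H_k$, both subgroups are closed and normal, and $G/\widehat H_k=\widehat G/(\widehat H_k/\widehat H)$. By (1) and (2), every $f\in\mathcal{O}_d(G)$ is the pullback of a polynomial in the exponential coordinates of $\widehat G$ of weighted degree $\le d<k$. Such a polynomial involves only coordinates $z^i$ with $d_i\le k-1$, i.e.\ those attached to $V_1\oplus\cdots\oplus V_{k-1}$.

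The key step, and the one I expect to need the most care, is to show that these coordinates are constant on cosets of $\widehat H_k/\widehat H$. I would use the Baker--Campbell--Hausdorff formula. For $X\in\mathfrak{g}/\mathfrak{g}_s$ and $Y\in\mathfrak{g}_k/\mathfrak{g}_s$, $\log(\exp X\exp Y)=X+Y+\tfrac12[X,Y]+\cdots$, and every term other than $X$ lies in the ideal $\mathfrak{g}_k/\mathfrak{g}_s=V_k\oplus\cdots$. Hence the $V_1,\dots,V_{k-1}$ components of $\exp X\exp Y$ agree with those of $\exp X$. So $f$ descends to a holomorphic $\check f$ on $G/\widehat H_k$, giving the map $\pi_k^*:\mathcal{O}(G/\widehat H_k)\to\mathcal{O}(G)$ and its inverse on $\mathcal{O}_d$.

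It remains to check that growth transfers in both directions, using that the induced metric is the quotient metric, exactly as in \eqref{dcontrol} and the proof of Theorem \ref{poly_space_char_intro}. Injectivity and well-definedness of $\pi_k^*$ come from $d_{\widehat g}(\pi x,\pi y)\le d_g(x,y)$. Surjectivity comes from choosing a lift $\widetilde x$ realizing $d_{\widehat g}([x],[\mathbf{1}_G])=d_g(\widetilde x,\mathbf{1}_G)$, so that $|\check f([x])|=|f(\widetilde x)|\le C(1+d_{\widehat g}([x],[\mathbf{1}_G]))^d$.
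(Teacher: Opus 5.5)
Your proposal is correct and follows the paper's route. Part (1) is Theorem \ref{poly_space_char_intro}. Part (2) uses Proposition \ref{coordi_grow} for ``$\supseteq$'' and, for ``$\subseteq$'', transfers to $\widehat G_\infty$ via $\mathcal{F}$, then applies Guivarc'h's estimate \eqref{equiva_identity} and Proposition \ref{Odcc_char}. You are also right to use the quotient algebra's own step count, since for minimal $s$ the algebra $\mathfrak g/\mathfrak g_s$ is only $(s-1)$-step nilpotent. The paper leaves (3) implicit. Your Baker--Campbell--Hausdorff argument, showing that the weight-$<k$ exponential coordinates are invariant under right translation by $\widehat H_k/\widehat H$, combined with the quotient-metric comparison \eqref{dcontrol}, is a valid way to supply it.
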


\section{Examples}\label{Sec_5}

In this section, we discuss examples of Chern-flat Hermitian manifolds in low dimensions. We focus on the simply connected case and study the holomorphic function theory on these manifolds. In particular, the examples constructed in Proposition \ref{model_2dim} show that Theorem \ref{subline_intro} is sharp. We also discuss relevant examples regarding Questions \ref{ques2_intro}, \ref{ques3_intro}, and \ref{ques4_intro}.

Recall that $\mathcal{O}(M)$ denotes the ring of holomorphic functions on a complex manifold $M$. 
By $\mathbb{C}\{ \cdot \}$ we mean the subring which is generated by power series in terms of the corresponding variable/function which converges on the whole manifold. For example, $\mathcal{O}(\mathbb{C}^2)=\mathbb{C}\{z_1, z_2\}$ where $z_1, z_2$ are (global) holomorphic coordinates on $\mathbb{C}^2$. Recall that Hadamard order at most $\beta$ for functions $f\in\mathcal{O}(M)$ was defined in \eqref{Horder_atmost}. For simplicity, we say that $f$ \emph{has Hadamard order $\beta$} if its Hadamard order is at most $\beta$ and not at most $\beta-\delta$ for any $\delta>0$.

\subsection{Dimension two}

Let $G$ be a simply connected complex Lie group in dimension $2$. Its Lie algebra $\mathfrak{g}$ is isomorphic to
\[
\mathfrak{g}=\operatorname{span}\{X, Y\},\ \ \ [X, Y]=\lambda Y.  \ \ \text{for some}\ \lambda \in \mathbb{C}.
\]
If $\lambda=0$, $G$ is isomorphic to the vector group $(\mathbb{C}^2, +)$. For any $\lambda \in \mathbb{C} \setminus \{0\}$, the corresponding group $G$ is isomorphic to any other such group. It is a solvable Lie group which is isomorphic to a semi-direct product $\mathbb{C} \ltimes \mathbb{C}$. The multiplication in $G$ can be expressed as
\begin{equation}
(t, s) \cdot (t^{\prime}, s^{\prime})=(t+t^{\prime}, s+e^{\lambda t}s^{\prime}).\ \ \ \label{2dim_group}    
\end{equation}
We may pick a left-invariant frame as $\{e_1=\frac{\partial}{\partial t},\ e_2=e^{\lambda t}\frac{\partial}{\partial s}\}$. Then the corresponding left-invariant Hermitian metric as
\begin{equation}
    \omega=\sqrt{-1}(dt \wedge d\overline{t}+e^{-2\operatorname{Re}(\lambda t)} ds \wedge d\overline{s}).  \label{2d_metric}
\end{equation}

The following result is a special case of Theorem \ref{func_Lie_summary}. We include a direct proof, since $(G,g)$ given by \eqref{2d_metric} is of independent interest.

\begin{proposition}\label{model_2dim}
Let $(M, g)$ be the Hermitian manifold $(\mathbb{C} \ltimes \mathbb{C}, \omega)$ defined in \eqref{2dim_group} and \eqref{2d_metric}. For any $d \geq 0$, $\mathcal{O}_d(M, g)=\operatorname{span}\{1, t, t^2, \cdots, t^{\lfloor d \rfloor}\}$ where $\lfloor d \rfloor$ denotes the greatest integer not exceeding $d$.
\end{proposition}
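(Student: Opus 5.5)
We have to show two things: first, that $1, t, \ldots, t^{\lfloor d\rfloor}$ lie in $\mathcal{O}_d(M,g)$; second, that nothing else does. For $\lambda=0$ the space $M$ is flat $\mathbb{C}^2$, so we assume $\lambda\neq 0$ throughout.

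\emph{Step 1: the powers of $t$ have the right growth.} Consider the projection $(t,s)\mapsto t$. Since $\omega \geq \sqrt{-1}\,dt\wedge d\overline{t}$, any curve from the identity $(0,0)$ to $(t,s)$ has length at least $|t|$. Hence $|t(q)| \leq d_g(q,\mathbf{1})$, so $t$ is Lipschitz. It follows that $t^k \in \mathcal{O}_k \subset \mathcal{O}_d$ whenever $k\le d$.

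\emph{Step 2: every $f\in\mathcal{O}_d$ is independent of $s$.} The cleanest route is Theorem \ref{poly_space_char_intro}. Here $\mathfrak g_2=[\mathfrak g,\mathfrak g]=\operatorname{span}\{Y\}$ and $\mathfrak g_3=[X,Y]\mathbb{C}=\mathfrak g_2$, so the lower central series stabilizes at $\widehat H=\{(0,s)\}$. Therefore $\mathcal{O}_d(G)\cong\mathcal{O}_d(G/\widehat H)$, and $G/\widehat H\cong\mathbb{C}$ with the quotient metric $|dt|^2$. By the classical Cauchy estimate on $\mathbb{C}$, $\mathcal{O}_d(\mathbb{C})$ consists of the polynomials in $t$ of degree at most $\lfloor d\rfloor$, which gives the claim.

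For a direct proof, apply the same mechanism explicitly. Since $[e_1,e_2]=\lambda e_2$, we have
\[
e_2 f = \lambda^{-1}\bigl(e_1e_2 f - e_2e_1 f\bigr).
\]
By Theorem \ref{grad} (torsion constant, $\delta=0$), each application of $e_1$ or $e_2$ lowers the polynomial growth order of a holomorphic function by a definite amount, say $\tfrac13$. So
\[
e_2 f = \lambda^{-m} \bigl(\operatorname{ad}_{e_1}\bigr)^m (e_2)\, f ,
\]
and expanding this expresses $e_2 f$ as a combination of $(m+1)$-fold derivatives of $f$. For $m$ large, all these derivatives have negative growth order. By Corollary \ref{Liouville}(1), they are then constant, and since they decay, they vanish. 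Hence $e_2 f\equiv 0$, so $\partial f/\partial s=0$ and $f=f(t)$.

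\emph{Step 3: reduce to one variable.} Now $f=f(t)$, and the $t$-axis $\{s=0\}$ is totally geodesic-like: the curve $\tau\mapsto(\tau,0)$ has length $|t|$. So $d_g\big((t,s),\mathbf 1\big)\le |t|+d_g\big((t,s),(t,0)\big)$, and in particular $d_g((t,0),\mathbf 1)\le|t|$. Hence $|f(t)|\le C(1+|t|)^d$ on $\mathbb{C}$. Liouville's theorem with the Cauchy estimate then shows that $f$ is a polynomial of degree at most $\lfloor d\rfloor$.

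The main obstacle is Step 2 in the direct version. One must check that the decay obtained from Theorem \ref{grad} really iterates down to zero: because the torsion is constant, each derivative of a holomorphic function is again holomorphic with a strictly smaller growth exponent. Citing Theorem \ref{poly_space_char_intro} avoids this bookkeeping.
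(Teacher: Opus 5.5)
Your proof is correct, but it takes a different route from the paper's. You treat the proposition as a special case of the general machinery. The lower central series stabilizes at $\operatorname{span}\{Y\}$, so $\widehat H=\{(0,s)\}$, and Theorem \ref{poly_space_char_intro} removes every $s$-dependence. Equivalently, you write $e_2=\lambda^{-m}\operatorname{ad}_{e_1}^m(e_2)$ and iterate Theorem \ref{grad} with $\delta=0$ until Corollary \ref{Liouville}(1) applies. A one-variable Cauchy estimate along the line $\{s=0\}$, on which $d_g=|t|$, then finishes the argument. You are also right that $\lambda\neq 0$ is implicit in the statement.

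The paper instead gives a self-contained geometric proof:
\begin{itemize}
\item It normalizes to $\lambda=-1$ and identifies $(M,g)$ with $H^3\times\mathbb{R}$ via $v=e^{-x_1}$.
\item From the explicit shape of hyperbolic geodesic spheres it gets $|t|\le d_g$ and $|s|\le e^{d_g}$.
\item It shows that the bicircle $\{|t|=R/2,\ |s|=\tfrac12 e^{R/4}\}$ lies in $B(p,2R)$.
\item A single Cauchy estimate on that bidisk forces $\partial_t^{k_1}\partial_s^{k_2}f(0,0)=0$ whenever $k_2\ge 1$ or $k_1>d$, because an exponentially large $s$-radius beats polynomial growth.
\end{itemize}

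Your argument is shorter given the earlier results, and it explains structurally why $s$ drops out: $s$ spans the stable term of the lower central series. However, it rests on the gradient estimate, which uses Chern--Lu, the Chen--Yang comparison, the model metrics and point-picking. The paper's proof needs none of this. Its mechanism, exponential distortion of the fiber direction detected by Cauchy estimates on anisotropic polydisks, is what carries over to Proposition \ref{2dim_more}. There the metric is not left-invariant, and for $\deg\rho\ge 2$ the torsion grows at least linearly. In that setting neither Theorem \ref{poly_space_char_intro} nor the iterated use of Theorem \ref{grad} gives any drop in growth order.
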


\begin{proof}[Proof of Proposition \ref{model_2dim}]
   It suffices to consider the case of $\lambda=-1$. Let $t=x_1+\sqrt{-1}y_1$ and $s=x_2+\sqrt{-1}y_2$. Then $(M, g)$ is isometric to the product of $3$-dimensional hyperbolic metric $g_h$ and a real line $(\mathbb{R}, dy_1^2)$. Indeed, with a coordinate change $v=e^{-x_1}$, we have
   \[
   g_{h}= \frac{dv^2+dx_2^2+dy_2^2}{v^2},
   \] which is the standard hyperbolic metric on the upper half space $\mathbb{R}^3_{+}=\{(x_2, y_2, v) \ |\ v>0\}$. It is a standard fact that the geodesic sphere centered at $\widetilde{p}=(0, 0, 1)$ with radius $R$ can be solved as
   \begin{equation}
    x_2^2+y_2^2+(v-\cosh R)^2=\sinh^2 R.  \label{geo_shape}    
   \end{equation}
   It follows that $e^{-R} \leq v \leq e^R$ and $\sqrt{x_2^2+y_2^2} \leq \sinh R$. 
   
   Consider the point $p=(\widetilde{p}, 0) \in \mathbb{R}^3_{+} \times \mathbb{R}$ which corresponds to $(t, s)=(0, 0) \in M$. For any $q=(\widetilde{q}, y_1) \in M$ where $y_1=\ln v$, we have 
   \begin{equation}
    (d_g(q, p))^2=(d_{g_h}(\widetilde{q}, \widetilde{p}))^2+y_1^2.  \label{Pytho}
   \end{equation}
   Now we obtain the growth estimates of holomorphic functions $t$ and $s$.
   \begin{align}
   |t|&=\sqrt{(-\ln v)^2+y_1^2}  \leq \sqrt{R^2+y_1^2} = d_g(q, p),   \label{tgrowth}\\
   |s|&=\sqrt{x_2^2+y_2^2} \leq \sinh R \leq e^{R} \leq e^{d_g(q, p)}.  \label{sgrowth}
   \end{align}
Let $B(p, R)$ denote the geodesic ball of $(M, g)$ centered at $p$ with radius $R$. Given any $f(t, s) \in \mathcal{O}_d(M)$, we show that $f$ is a polynomial of $t$ of degree at most $\lfloor d \rfloor$. To this end, we check that  for $R>0$ sufficiently large, the bi-circle $\{|t|=\frac{R}{2}, |s|=\frac{1}{2}e^{\frac{R}{4}}\}$ is contained in $B(p, 2R)$. In view of (\ref{geo_shape}) and (\ref{Pytho}), it suffices to check that
\begin{align*}
x_2^2+y_2^2+(v-\cosh R)^2 \leq \frac{1}{4}e^{\frac{R}{2}} +[\sinh R-\frac{1}{2}(e^{-\frac{R}{2}}-e^{-R})]^2<\sinh^2 R.   
\end{align*}
Then we obtain the following Cauchy estimates on a bidisk.
\begin{align}
|\frac{\partial^{k_1+k_2} }{\partial t^{k_1} \partial s^{k_2}} f(0, 0)| & \leq   \frac{ k_1 ! k_2 ! \sup_{B(p, 2R)} |f(t, s)|}  {(\frac{R}{2})^{k_1} (\frac{1}{2}e^{\frac{R}{4}})^{k_2}} \leq \frac{ k_1 ! k_2 ! C(1+2R)^d}  {(\frac{R}{2})^{k_1} (\frac{1}{2}e^{\frac{R}{4}})^{k_2}}.  \label{cauchy1}
\end{align}
Therefore, for any $k_1>d$ or $k_2 \geq 1$, the corresponding partial derivatives $\frac{\partial^{k_1+k_2} }{\partial t^{k_1} \partial s^{k_2}} f(0, 0)=0$. That is to say, any $f \in \mathcal{O}_d(M, g)$ is a polynomial of $t$ with degree $ \leq \lfloor d \rfloor$.

\begin{comment}
 Moreover, there exists some $R_0>0$ so that for the coordinate slice $\{|t|=R, s=0\}$ is contained in $\overline{B(p, 2R)}$ any $R>R_0$. The latter statement follows once we show 
\begin{equation}
\Phi(x_1, y_1)=(e^{x_1}-\cosh(2R))^2+y_1^2-\sinh^2(2R) < 0\ \text{when}\ |t|=\sqrt{x_1^2+y_1^2}=R>R_0.
\label{Property1}
\end{equation}
Once (\ref{Property1}) is proved, 

It remains to verify (\ref{Property1}). To that end, we rewrite
\[
\Phi(x)=e^{2x}-2\cosh (2R)e^x-x^2+(R^2+1),\ \ x \in [-R, R].
\] Then $F(-R)=-e^R (1-O(e^{-R}))$ and $F(R)=-e^{2R}(1-O(e^{-R}))$. And it is elementary to show
$F'<0$ on $[-R, R]$ if $R$ is large enough.    
\end{comment}

\end{proof}

Next, we discuss a generalization of the complete Chern-flat Hermitian metrics constructed in \cite[p.~5767]{WYZ2020}. Given any holomorphic function $\rho(z)$ on $\mathbb{C}$, we define a Hermitian metric $g$ on ${\mathbb C}^2$. 
\begin{equation}
   \omega= \sqrt{-1} ( \varphi^1\wedge \overline{\varphi^1} + \varphi^2 \wedge \overline{\varphi^2}), 
   \label{C2Herm}
\end{equation}
where $\varphi^1=dz_1$, $\varphi^2=e^{\rho(z_1)} dz_2$, and $(z_1, z_2)$ is the standard coordinate of $\mathbb C^2$. The torsion components under the corresponding $(1, 0)$ frame 
are $T^1_{12}=0$, $T^2_{12}=\frac{1}{2} \rho'(z_1)$. So the norm of the Chern torsion $|T|^2=\frac{1}{4}|\rho'(z_1)|^2$ is not a constant unless $\rho(z)$ is linear. If $\xi(z)=-\lambda z$ for some constant $\lambda \in \mathbb{C}$, then $(\mathbb{C}^2, g)$ is holomorphically isometric to $\mathbb{C} \ltimes \mathbb{C}$ given by (\ref{2dim_group}) and (\ref{2d_metric}).

\begin{proposition}\label{2dim_more}

For any holomorphic function $\rho$ on $\mathbb{C}$, \eqref{C2Herm} defines a complete Chern-flat Hermitian metric on $\mathbb{C}^2$. Assume that $\rho(z)$ is a polynomial of degree $k$ with $k \geq 1$. Then for any $d \geq 0$, $\mathcal{O}_d(M, g)=\operatorname{span}\{1, z_1, z_1^2, \cdots, z_1^{\lfloor d \rfloor}\}$. Moreover, $z_2$ is of Hadamard order $k$.
\end{proposition}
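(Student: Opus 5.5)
Four things need proof: Chern-flatness, completeness, the description of $\mathcal{O}_d$, and the Hadamard order of $z_2$. I would handle them in that order, reusing the model computation from Proposition \ref{model_2dim} wherever possible.

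\textbf{Flatness.} The coframe $\varphi^1=dz_1$, $\varphi^2=e^{\rho(z_1)}dz_2$ is holomorphic, so the dual frame $e_1=\partial_{z_1}$, $e_2=e^{-\rho(z_1)}\partial_{z_2}$ is a global holomorphic frame. It is unitary for $\omega$. By the converse part of Lemma \ref{flat_para}, the metric making a holomorphic frame unitary has flat Chern connection with that frame parallel, so $g$ is Chern-flat.

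\textbf{Completeness.} Let $\gamma$ be a curve of finite length. Since $|dz_1(\gamma')|\le|\gamma'|_g$, the coordinate $z_1$ stays in a compact disk $\{|z_1|\le R\}$. On that disk $|e^{\rho}|\ge c>0$, so $|dz_2(\gamma')|\le c^{-1}|\gamma'|_g$ and $z_2$ also stays bounded. Hence closed bounded sets are compact, and Hopf--Rinow gives completeness.

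\textbf{The spaces $\mathcal{O}_d$.} The function $z_1$ is $1$-Lipschitz, so $|z_1(q)|\le d(q,p)+|z_1(p)|$ and $\operatorname{span}\{1,\dots,z_1^{\lfloor d\rfloor}\}\subset\mathcal{O}_d$. For the reverse inclusion I would run a Cauchy estimate on bidisks, as in \eqref{cauchy1}. The key step, and the main obstacle, is to show that a bidisk $\{|z_1|\le R/2,\ |z_2|\le r(R)\}$ lies in $B(p,CR)$ with $r(R)\to\infty$ faster than any power of $R$.

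To build this bidisk, write $\rho(z)=a z^k+\dots$ with $a\ne0$. Choose a direction $\theta$ with $\operatorname{Re}(a e^{ik\theta})>0$. Along the ray $z_1=te^{i\theta}$ we have $\operatorname{Re}\rho\le -c t^k$ for large $t$; replace $\theta$ by $-\theta$ if necessary. A point $(z_1,z_2)$ can then be reached by the following path:
\begin{enumerate}
\item travel in $z_1$ to $w=Re^{i\theta}$, at cost $O(R)$;
\item move $z_2$ a Euclidean distance $|z_2|$, at cost $e^{\operatorname{Re}\rho(w)}|z_2|\le e^{-cR^k}|z_2|$;
\item return in $z_1$, at cost $O(R)$.
\end{enumerate}
Thus every point with $|z_1|\le R/2$ and $|z_2|\le e^{cR^k}$ lies within distance $CR$ of $p$. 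The Cauchy estimate then gives
\[
|\partial_{z_1}^{j}\partial_{z_2}^{l}f(0)|\le C\, j!\,l!\,(1+CR)^{d}\,(R/2)^{-j}e^{-lcR^k}.
\]
Letting $R\to\infty$ kills every coefficient with $l\ge1$ or $j>d$. So any $f\in\mathcal{O}_d(M,g)$ is a polynomial in $z_1$ of degree at most $\lfloor d\rfloor$.

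\textbf{Hadamard order of $z_2$.} For the upper bound, note that if $d(q,p)\le R$ then $|z_1|\le R+C$ along a near-minimizing path $\gamma$. On that path $|dz_2(\gamma')|\le e^{-\operatorname{Re}\rho}|\gamma'|_g\le e^{C(1+R)^k}|\gamma'|_g$, so $|z_2(q)|\le Re^{C(1+R)^k}$ and $\ln|z_2|\le C(1+d)^k$.

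For the lower bound, the path construction above shows that the point $(0,e^{cR^k})$ lies within distance $CR$ of $p$. Hence $\ln|z_2|\ge c'\,d^k$ along a sequence of points tending to infinity, so the Hadamard order of $z_2$ is not at most $k-\delta$ for any $\delta>0$. Together with the upper bound, $z_2$ has Hadamard order exactly $k$.
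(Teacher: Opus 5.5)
Your proof is correct and follows essentially the paper's route. You prove completeness by bounding $z_1$ and then $z_2$ on finite-length curves, determine $\mathcal{O}_d$ by Cauchy estimates on bidisks placed inside metric balls using a ray on which $\operatorname{Re}\rho\to-\infty$, and get the Hadamard upper bound by integrating $|dz_2(\gamma')|\le e^{-\operatorname{Re}\rho}|\gamma'|_g$ along a near-minimizing path. Your out-and-back path, which moves $z_2$ only at the far point $Re^{i\theta}$, is a cleaner substitute for the paper's diagonal curves, along which $z_2$ moves together with $z_1$. It gives an exponential rather than polynomial $z_2$-radius, handles both $\mathcal{O}_d$ and the Hadamard lower bound at once, and works for general rather than only monomial $\rho$.

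One slip to fix: you need $\operatorname{Re}(ae^{ik\theta})<0$, e.g.\ $ae^{ik\theta}=-|a|$. The sign is reversed by $\theta\mapsto\theta+\pi/k$, not by $\theta\mapsto-\theta$.
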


\begin{proof}[Proof of Proposition \ref{2dim_more}]

\textbf{Step 1:} We show that $g$ defined in (\ref{C2Herm}) is complete. Let $\gamma : [0,\infty ) \rightarrow {\mathbb C}^2$ be a smooth curve that goes to infinity. Write $\gamma (t) = (z_1(t), z_2(t))$.  Its length with respect to $g$ is
\begin{equation}
L_g(\gamma)= \int_0^{\infty } \sqrt{ |z_1'(t)|^2 + e^{2\operatorname{Re}(\rho(z_1))}|z_2'(t)|^2 } dt.  \label{lengthf}    
\end{equation}
It suffices to show that $L_g(\gamma)$ is infinite. Assume the contrary. Then $\int |z_1'|dt \le L_g(\gamma) < \infty$. Thus $|z_1(t)|\leq C_1$ for some constant $C_1$ for any $t>0$. Let $C_2=\inf_{|z_1| \leq C_1} \operatorname{Re}\rho(z_1)$. Then $L_g(\gamma) \geq e^{C_2} \int |z_2'(t)|dt$. It follows that $z_2(t)$ is also bounded. This contradiction shows that $g$ is complete.

\textbf{Step 2:} We study the growth of holomorphic functions with respect to (\ref{C2Herm}) if $\rho(z)$ is a polynomial of degree $k \geq 1$. For simplicity, we focus on the case $\rho(z)=a_k z^k$ with $a_k \neq 0$. The general case can be treated by a similar argument.

Let $O$ be the origin of $\mathbb{C}^2$. It follows from (\ref{lengthf}) that $|z_1(q)| \leq d_g(q, O)$ and the equality is attained at any point $(z_1, 0)$. Hence $z_1$ grows linearly. Assume that $\gamma(t)$ with $t_0 \leq t \leq t_1$ is the minimizing geodesic from $O$ to any given point $q$. By (\ref{lengthf}), we have
\[
L(t) \coloneqq d(O, \gamma(t))=\int_{t_0}^t \sqrt{|z_1^{\prime}(t)|^2+e^{-2\operatorname{Re}(a_k z_1^k(t))}|z_2^{\prime}(t)|^2} dt.  
\]
Now we observe the following inequality
\[
L'(t) \geq e^{-|a_k|[L(t)]^k}|z_2^{\prime}(t)|\ \  \text{for}\ t_1 \leq t \leq t_2.
\]
We choose $t_{\ast} \in (0, t_1]$ so that $L(t_{\ast})=1$. It follows from an integration from $t_{\ast}$ to $t_1$ that
\begin{equation}
|z_2(q)| \leq e^{|a_k| (d_g(q, p))^k}+C      \label{z2exp_est}
\end{equation}
where $C$ is a constant which depends on $\sup_{B_g(O, 1)} |z_2|$. Therefore $z_2$ is of Hadamard's order $\leq k$.

Pick any $\delta>0$ which is sufficiently small. Assume that there exist some $C>0$ and $d>0$ such that 
\begin{equation}
  |z_2(q)| \leq \exp{(C(d(q, p)+1)^{k-2\delta})}  \ \ \text{for}\ q \in \mathbb{C}^2.   \label{contragrow}
\end{equation}
Then we consider another curve $\gamma(t)=(z_1(t), z_2(t))=(t e^{i\theta_0}, \exp(t^{d_1}))$ with $t \in [0, \infty)$ and $d_1=d-\delta$. Here we choose $\theta_0 \in [0, 2\pi)$ so that $\operatorname{Re} \rho(z_1(t))=-|a_k|t^k$. Then
\begin{align}
d_g(\gamma(t_n), 0) &\leq \int_0^{t_n} \sqrt{|z_1'(t)|^2+e^{2\operatorname{Re}\rho(z_1(t))}|z_2'(t)|^2} dt    \nonumber \\
&\leq t_n+ \int_0^{+\infty}   e^{-|a_k|t^k+t^{d_1}-1} (d_1 t^{d_1-1}) dt.  \label{z2exp_est2}
\end{align}
In view of (\ref{contragrow}), we get a contradiction from the above inequality as $t_n \rightarrow \infty$.

\textbf{Step 3:} For any $d \geq 0$, we determine $\mathcal{O}_d(M, g)=\operatorname{span}\{1, z_1, z_1^2, \cdots, z_1^{\lfloor d \rfloor}\}$.

As in the proof of Proposition \ref{model_2dim}, we show that when $R$ is sufficiently large, the coordinate circle 
$\{z_1=\frac{R}{4}, |z_2|=\Psi(R)\}$ is contained in $B(O, 2R)$. Here $\Psi(R)$ is a positive increasing function of $R$ to be determined. Now we pick any point $q$ with $|z_1|=\frac{R}{4}, |z_2(q)|=\Psi(R)$ and any integer $d_1>d$. We consider a piecewise smooth curve from $O$ to $q$ so that
\[
\gamma_1(t)=(t e^{i\theta_0}, t^{d_1}e^{i\theta_1})\ \ \ 0 \leq t \leq t_1,  \ \ \ \gamma_2(s)=((t_1-s)e^{i\theta_0}, z_2(q))\ \ 0 \leq s \leq t_2.
\] Note that $\gamma_1(t)$ is the same curve used in (\ref{z2exp_est2}) with the corresponding $\theta_0$. We choose $t_1$, $t_2$, and $\theta_1$ such that $t_1^{d_1}=\Psi(R)$, $\Psi(R) e^{i\theta_1}=z_2(q)$, and $t_1-t_2=\frac{R}{4}$. We may estimate the lengths of $\gamma_1$ and $\gamma_2$ by (\ref{lengthf}). It follows from the triangle inequality that
\begin{equation}
d_g(O, q) \leq 2[\Psi(R)]^{\frac{1}{d_1}}+C_1, \ \ \text{where}\ C_1=\int_0^{+\infty}   e^{-|a_k|t^k} (d_1 t^{d_1-1}) dt.    \label{twolength}
\end{equation}
Choose $\Phi(R)=(\frac{R-C_1}{2})^{d_1}$. Then (\ref{twolength}) implies that $d_g(O, q) \leq R$, and hence $q \in B(p, 2R)$. The desired conclusion on $\mathcal{O}_d(M, g)$ follows from the Cauchy estimates as in (\ref{cauchy1}).

\end{proof}

\begin{remark}\label{rho(z)general}
Let $\{\widetilde{e}_i\}_{i=1}^4$ be an orthonormal basis corresponding to $\{e_1, e_2\}$ in Proposition \ref{2dim_more}.
\[
e_1=\frac{1}{\sqrt{2}}(\widetilde{e}_1-\sqrt{-1}\widetilde{e}_2),\ \  e_1=\frac{1}{\sqrt{2}}(\widetilde{e}_3-\sqrt{-1}\widetilde{e}_4). \]
Indeed, we solve its Riemannian curvature by \cite[Lemma 7]{YZ2018}.
\begin{align*}
&Ric(\widetilde{e}_3, \widetilde{e}_3)=-\frac{1}{2}|\rho'(z_1)|^2=R(\widetilde{e}_3, \widetilde{e}_4, \widetilde{e}_4, \widetilde{e}_3),\\
&Ric(\widetilde{e}_1, \widetilde{e}_1)=-\frac{1}{4}|\rho'(z_1)|^2-\operatorname{Re}\Big(\frac{1}{4}(\rho'(z_1))^2+\frac{1}{2}\rho^{\prime\prime}(z_1)\Big).    
\end{align*}
If $\rho(z_1)$ is a polynomial of degree $\geq 2$, the Riemannian Ricci curvature is not bounded from below. Moreover, it has mixed signs if $\rho(z_1)$ is suitably chosen. Compare the case where $\rho(z)$ is linear and the corresponding Hermitian structure is $H^3 \times \mathbb{R}$, as in Proposition \ref{model_2dim}. 
\end{remark}

\begin{comment}
    Let $\widetilde{z}_1=e^{i\theta} z_1$. It follows from (\ref{C2Herm}) that
\begin{equation}
   \widetilde{\omega}= \sqrt{-1} ( d \widetilde{z}_1 \wedge  d\overline{\widetilde{z}_1} + e^{2\operatorname{Re}\rho(e^{-i\theta}\widetilde{z}_1)} dz_2 \wedge d\overline{z_2} ) 
   \label{C2Herm_2}
\end{equation}
 So the growth orders of $\widetilde{z}_1$ and $z_2$ with respect to $\widetilde{\omega}$ are the same as those of $z_1$ and $z_2$ with respect to $\omega$. 
\end{comment}

\subsection{Dimension three: the solvable case}\label{3dim_examp}

It is well-known that three-dimensional solvable complex Lie algebra has been classified, see \cite{PZ1990} and \cite{degraaf2005}. The corresponding simply conncted complex Lie groups are listed as follows. 

\begin{enumerate}
    \item  The abelian case: $G=(\mathbb{C}^3, +)$.
    \item  The nilpotent case: $G$ is the complex Heisenberg Lie group with its Lie algebra $\mathfrak{g}$ has basis $e_1, e_2, e_3$ satisfying $[e_1, e_2]=e_3$ and $[e_1, e_3]=[e_2, e_3]=0$.
    \item  A non-nilpotent family: $G$ is the semi-direct product $\mathbb{C} \ltimes \mathbb{C}^2$. The corresponding Lie algebra is spanned by $T, Z, W$ with
\[
[T, Z]=Z, \ \ [T, W]=\lambda W,\ \ \ [Z, W]=0.\ \ \text{for some}\ \lambda \in \mathbb{C}. 
\] Note that $\operatorname{ad}_T$ acts on $\operatorname{span}\{Z, W\}$ diagonally.
The corresponding group law is
\[
(t, z, w) \cdot (t^{\prime}, z^{\prime}, w^{\prime})=(t+t^{\prime}, z+e^{t}z^{\prime}, w+e^{\lambda t}w^{\prime}).
\]
\item  Another non-nilpotent group: $G$ is the semi-direct product of $\mathbb{C} \ltimes \mathbb{C}^2$. The corresponding Lie algebra is spanned by $T, Z, W$ such that
\[
[T, Z]=Z, \ \ [T, W]=Z+ W,\ \ \ [Z, W]=0. 
\] Note that $\operatorname{ad}_T$ acts on $\operatorname{span}\{Z, W\}$ non-diagonally. The corresponding group law is
\[
(t, z, w) \cdot (t^{\prime}, z^{\prime}, w^{\prime})=(t+t^{\prime}, z+e^{t}(z^{\prime}+t\,w^{\prime}), w+e^{t}w^{\prime}).
\]
\end{enumerate}

In this subsection, we show that the complex Heisenberg group admits many complete Chern-flat Hermitian metrics that are not left-invariant; see Proposition \ref{Heisen_non_inv}.

Recall that the complex Heisenberg Lie group $G$ consists of all complex $3\times 3$ matrices $X$ in the form
\begin{equation} \label{Heisen_def}
X = \left( \begin{array}{lll} 1 & z_1 & z_3 \\ 0 & 1 & z_2 \\ 0 & 0 & 1
\end{array} \right).     
\end{equation} 
The left-invariant holomorphic $1$-forms $\varphi^1=dz_1$, $\varphi^2=dz_2$, and
$\varphi^3=dz_3-z_1dz_2$ define a natural Hermitian metric with zero Chern curvature
\begin{equation}
\omega=\sqrt{-1} (\sum_{k=1}^3 \varphi^k \wedge \overline{\varphi^k}).   \label{3dim_metric1}
\end{equation} Moreover, it is balanced as $\eta=0$ in (\ref{1formdef}). 

As a complex manifold, $G$ is biholomorphic to $\mathbb{C}^3$ and $\mathcal{O}(G)=\mathcal{O}\{z_1, z_2, z_3\}$. The Riemannian geometry of (\ref{3dim_metric1}) has been studied extensively. We refer to \cite{Eber1994} and \cite{Pansu1983} for results on curvature, geodesics, and volume growth of geodesic balls for the left-invariant geometry of nilpotent groups. For example, its Riemannian sectional curvature is bounded below by a negative constant, and the volume of a geodesic ball $B(p, r)$ grows like $r^8$ as $r$ tends to infinity.

Growth of holomorphic functions on $G$ can be studied using Theorem \ref{func_Lie_summary}.  Note that the global holomorphic coordinates $z_1, z_2$, and $z_3$ given by \eqref{Heisen_def} do not coincide with exponential coordinates in Theorem \ref{func_Lie_summary}. Nonetheless, it is easy to show that both $z_1$ and $z_2$ have linear growth, and $z_3$ has quadratic growth with respect to \eqref{3dim_metric1}. In fact, we may construct a family of complete Chern flat Hermitian metrics on $G$.

\begin{comment}
\begin{proof}[Proof of Proposition \ref{3dimH_f}]
   Consider any $C^1$ curve $\gamma(t)=(z_1(t), z_2(t), z_3(t))$ from the identity $p=(0, 0, 0)$ to a point 
   $q=(z_1, z_2, z_3)$. We have $L_g(\gamma(t)) \geq \sup_{\gamma(t)} |z_1(t)|$. In the meantime, we estimate
   \begin{align*}
       |z_3| &\leq \int |z_3'(t)| dt \leq \int |z_3'(t)-z_1(t)z_2'(t)| dt +\int |z_1(t)z_2'(t)| dt\\
       &\leq \int |\gamma'(t)|_g dt + [L_g(\gamma(t))]^2.
   \end{align*}
   It follows that $|z_3|(q)  \leq d_g(q, p)+[d_g(q, p)]^2 \leq C(1+d_g(q, p))^2$ for some constant $C>0$. Moreover, the quadratic order is sharp along the curve $\gamma(t)=(t, 2t, t^2)$ for $t \in [0, \infty)$ as
   \[
   \frac{|z_3|}{(d_g(\gamma(t), p)+1)^2} \geq \frac{t^2}{(\sqrt{5}t+1)^2}.
   \]
\end{proof}    
\end{comment}

\begin{proposition}\label{Heisen_non_inv}
    For any two holomorphic functions $\rho$ and $\xi$ on $\mathbb{C}^2$, we introduce three global holomorphic $1$-forms on $G$ as follows
    \[
    \psi^1=dz_1,\ \ \  \psi^2=dz_2, \ \ \ \psi^3=dz_3-\rho(z_1, z_2)dz_1-\xi(z_1, z_2)dz_2.
    \]
    Then the Hermitian metric 
    \begin{equation}
      \omega=\sqrt{-1} (\sum_{k=1}^3 \psi^k \wedge \overline{\psi^k})   \label{3dim_metric2}  
    \end{equation}
    is a complete Chern flat metric. It is not isometric to any left-invariant metric as long as $\frac{\partial \rho}{\partial z_2}-\frac{\partial \xi}{\partial z_1}$ is non-constant. Note that $z_1$ and $z_2$ are of linear growth. Moreover, $z_3$ is of polynomial growth if both $\rho$ and $\xi$ are polynomials.    
\end{proposition}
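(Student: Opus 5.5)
The plan is to verify the four assertions of Proposition \ref{Heisen_non_inv} directly from the coframe $\{\psi^1,\psi^2,\psi^3\}$, following the pattern of the proof of Proposition \ref{2dim_more}.

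\textbf{Chern-flatness.} The matrix expressing $(\psi^1,\psi^2,\psi^3)$ in terms of $(dz_1,dz_2,dz_3)$ is unipotent lower triangular with holomorphic entries. Hence $\{\psi^k\}$ is a global holomorphic coframe on $G\cong\mathbb{C}^3$, and its dual frame $\{e_k\}$ is a global holomorphic frame. By construction $\{e_k\}$ is unitary for \eqref{3dim_metric2}. By the converse part of Lemma \ref{flat_para}, the Chern connection of \eqref{3dim_metric2} is flat and $\{e_k\}$ is parallel.

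\textbf{Torsion.} Since $\{e_k\}$ is parallel, we have $\theta=0$ in \eqref{struct1}, so $\tau^k=d\psi^k$. Clearly $d\psi^1=d\psi^2=0$. For the third form,
\[
d\psi^3=-\partial_{z_2}\rho\,dz_2\wedge dz_1-\partial_{z_1}\xi\,dz_1\wedge dz_2=\Bigl(\frac{\partial\rho}{\partial z_2}-\frac{\partial\xi}{\partial z_1}\Bigr)\psi^1\wedge\psi^2 .
\]
Thus the only nonzero torsion components are
\[
T^3_{12}=-T^3_{21}=\tfrac12\Bigl(\frac{\partial\rho}{\partial z_2}-\frac{\partial\xi}{\partial z_1}\Bigr),
\]
so $|T|^2$ is a positive constant multiple of $\bigl|\partial_{z_2}\rho-\partial_{z_1}\xi\bigr|^2$.

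\textbf{Not isometric to a left-invariant metric.} A holomorphic isometry preserves the Chern connection, hence it preserves the function $|T|$. On a complex Lie group with a left-invariant metric, the torsion components in a left-invariant unitary frame are the structure constants (see the discussion before \eqref{jacobi}), so $|T|$ is constant there. Since $|\partial_{z_2}\rho-\partial_{z_1}\xi|$ is nonconstant whenever $\partial_{z_2}\rho-\partial_{z_1}\xi$ is nonconstant (a holomorphic function of constant modulus is constant), the metric \eqref{3dim_metric2} is not isometric to a left-invariant one in that case.

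\textbf{Completeness.} I will argue exactly as in Step 1 of Proposition \ref{2dim_more}. Suppose a curve $\gamma$ leaves every compact set but has finite length. Since $|z_i'|\le|\gamma'|_g$ for $i=1,2$, the coordinates $z_1,z_2$ stay bounded along $\gamma$. Then $|\rho|,|\xi|\le C$ along $\gamma$, and
\[
|z_3'|\le|\psi^3(\gamma')|+C(|z_1'|+|z_2'|)\le (1+2C)|\gamma'|_g .
\]
Hence $z_3$ is also bounded along $\gamma$, which is a contradiction.

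\textbf{Growth of $z_1,z_2$.} The same inequality $|z_i'|\le|\gamma'|_g$ gives $|z_i(q)|\le d_g(q,\mathbf 0)$ for $i=1,2$. For sharpness, take the horizontal curve $\bigl(t,0,\int_0^t\rho(s,0)\,ds\bigr)$. Along it $\psi^3=0$, so its length is $t$ while $|z_1|=t$; the case of $z_2$ is symmetric.

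\textbf{Polynomial growth of $z_3$.} Suppose $\rho,\xi$ are polynomials of degree at most $a$. Let $\gamma$ be a minimizing geodesic from $\mathbf 0$ to $q$, and set $L=d_g(\mathbf 0,q)$. Along $\gamma$ we have $|z_1|,|z_2|\le L$, so $|\rho|,|\xi|\le C(1+L)^a$. Therefore
\[
|z_3(q)|\le\int|\psi^3(\gamma')|+\int\bigl(|\rho||z_1'|+|\xi||z_2'|\bigr)\le L+2C(1+L)^{a}L\le C'(1+L)^{a+1},
\]
and $z_3$ has polynomial growth.

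The only slightly delicate point is the non-isometry claim. It relies on the invariance of $|T|$ under holomorphic isometries together with the constancy of $|T|$ for left-invariant metrics. Everything else is a direct computation.
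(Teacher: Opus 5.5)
Your proof is correct. The paper states this proposition without a written proof, so the natural comparison is Proposition \ref{2dim_more}: your steps (flatness from the converse in Lemma \ref{flat_para}; $\tau^3=d\psi^3$, giving $T^3_{12}=\tfrac12(\partial_{z_2}\rho-\partial_{z_1}\xi)$; a nonconstant $|T|$ as the obstruction to left-invariance; and integrating the coframe along curves for completeness and growth) are exactly the method the paper uses there for the metric \eqref{C2Herm}, and your non-isometry argument establishes the holomorphic-isometry statement that the introduction intends.
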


\subsection{Dimension three: the semisimple case}

By the Levi decomposition, the only (simply conncted) semisimple complex Lie group in dimension $3$ is $\operatorname{SL}(2, \mathbb{C})$. 

Now we review a natural left-invariant Hermitian metric on $\operatorname{SL}(2, \mathbb{C})$. We begin with the following choice of a basis in $\mathfrak{g}=\mathfrak{sl}(2,\mathbb{C})$ as
\[
X=\begin{pmatrix}
    1 & 0 \\
    0  & -1
\end{pmatrix},\ \ Y=\begin{pmatrix}
    0 &  1 \\
    0  & 0
\end{pmatrix},\ \ Z=\begin{pmatrix}
    0 & 0 \\
    1  & 0
\end{pmatrix}, 
\] with the relation
\[
[X, Y]=2Y,\ \ \ [X, Z]=-2Z,\ \ \ [Y, Z]=2X.
\]
Assume $G=\operatorname{SL}(2, \mathbb{C})$ as follows
\begin{equation}\label{SL2C_def}
\operatorname{SL}(2, \mathbb{C})=\Bigl\{ \begin{pmatrix}
    z_1 & z_2 \\
    z_3  & z_4
\end{pmatrix} \ \ |\ \  z_1z_4-z_2z_3=1, \ z_i \in \mathbb{C},\ \ 1 \leq i \leq 4 \Bigr\}.     
\end{equation}

Now we solve a left-invariant frame on $G$.
\begin{align}
e_1&=\frac{d}{dt} \Big[\begin{pmatrix}
    z_1 & z_2 \\
    z_3  & z_4
\end{pmatrix} \exp(t X)\Big]=z_1 \frac{\partial}{\partial z_1}-z_2 \frac{\partial}{\partial z_2}+z_3 \frac{\partial}{\partial z_3}-z_4 \frac{\partial}{\partial z_4}.\\
e_2&=z_1 \frac{\partial}{\partial z_2}+z_3 \frac{\partial}{\partial z_4},\ \ \   e_3=z_2 \frac{\partial}{\partial z_1}+z_4 \frac{\partial}{\partial z_3}. \nonumber
\end{align}
The corresponding dual frame is
\begin{align}
\varphi^1=z_4 dz_1-z_2dz_3, \ \ \varphi^2=z_4 dz_2-z_2 dz_4,\ \ \varphi^3=z_1 dz_3-z_3dz_1.
\end{align}
Therefore, we may define a left-invariant Hermitian metric on $\operatorname{SL}(2, \mathbb{C})$
\begin{equation}
\omega=\sqrt{-1} (\sum_{i=1}^3  \varphi^i \wedge \overline{\varphi^i}).  \label{ss_metric}    
\end{equation}
Note that
\begin{align*}
d\varphi^1=-\varphi^2 \wedge \varphi^3, \ \ d\varphi^2=-2\varphi^1 \wedge \varphi^2,\ \ d\varphi^3=2\varphi^1 \wedge \varphi^3.
\end{align*}
It follows that $T_{23}^1=-\frac{1}{2}, T_{12}^2=-1$, and $T_{13}^3=1$. As a result, $\eta$ in (\ref{1formdef}) vanishes and $\omega$ defined by \eqref{ss_metric} is balanced. 

We have seen that both the complex Heisenberg group and $\operatorname{SL}(2, \mathbb{C})$ are balanced with respect to their left-invariant geometry. Indeed, any left-invariant Hermitian metric on a complex Lie group $G$ is balanced if and only if $\operatorname{tr} \operatorname{ad}_X =0$ for all $X\in \mathfrak{g}$. So we have the following observation.

\begin{proposition}\label{Lie_balanced}
   Let $G$ be a complex Lie group. If its Lie algebra $\mathfrak{g}$ is nilpotent or perfect, then any left-invariant Hermitian metric on $G$ is balanced.
\end{proposition}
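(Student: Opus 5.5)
The plan is to reduce the balanced condition to a trace identity on $\mathfrak{g}$, and then check that identity separately for nilpotent and for perfect Lie algebras.

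\textbf{Step 1: balanced means $\eta=0$.} Fix a left-invariant Hermitian metric $g$ on $G$ and a left-invariant unitary holomorphic frame $\{e_i\}$ with dual coframe $\{\varphi^i\}$. By \eqref{eta_prop}, $\omega$ is balanced ($d\omega^{n-1}=0$) if and only if Gauduchon's $1$-form $\eta$ vanishes. Here we use that $\omega^{n-1}$ is real, so $\overline{\partial}\omega^{n-1}=\overline{\partial\omega^{n-1}}$. We also use that wedging with $\omega^{n-1}$ is injective on $1$-forms, so $\eta\wedge\omega^{n-1}=0$ forces $\eta=0$.

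\textbf{Step 2: express $\eta$ through the structure constants.} Write $[e_i,e_j]=c_{ij}^k e_k$ with constant $c_{ij}^k$. For a left-invariant parallel holomorphic frame, the structure equation \eqref{struct1} has $\theta=0$. This gives $d\varphi^k=\tau^k=T_{ij}^k\varphi^i\wedge\varphi^j$. On the other hand, $d\varphi^k(e_i,e_j)=-\varphi^k([e_i,e_j])$, so $2T_{ij}^k=-c_{ij}^k$, as in the reduction $[e_i,e_j]=-2T_{ij}^ke_k$ of Section~\ref{Sec_2}. Substituting into \eqref{1formdef},
\[
\eta_i=2T_{ki}^k=-c_{ki}^k=c_{ik}^k=\operatorname{tr}(\operatorname{ad}_{e_i}).
\]
Hence $\eta=0$ if and only if $\operatorname{tr}\operatorname{ad}_X=0$ for all $X\in\mathfrak g$. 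This is the criterion quoted just before the proposition, and it does not depend on which left-invariant metric was chosen.

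\textbf{Step 3: verify the trace condition in each case.}

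\emph{Nilpotent case.} By Engel's theorem each $\operatorname{ad}_X$ is a nilpotent endomorphism. Concretely, $\operatorname{ad}_X$ maps $\mathfrak g_k$ into $\mathfrak g_{k+1}$ in the lower central series \eqref{l_central_pf}. In an adapted basis \eqref{adapt_decomp}, $\operatorname{ad}_X$ is therefore strictly block triangular, so its trace is zero.

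\emph{Perfect case.} The map $X\mapsto\operatorname{tr}\operatorname{ad}_X$ is a Lie algebra homomorphism to $\mathbb C$, because
\[
\operatorname{tr}\operatorname{ad}_{[X,Y]}=\operatorname{tr}[\operatorname{ad}_X,\operatorname{ad}_Y]=0.
\]
So it vanishes on $[\mathfrak g,\mathfrak g]$, which equals $\mathfrak g$ when $\mathfrak g$ is perfect.

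The only step that needs real care is Step 2, specifically the sign and factor conventions linking $T_{ij}^k$ to $c_{ij}^k$. The final criterion does not depend on them, since only the vanishing of the trace matters.
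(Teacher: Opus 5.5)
Your proof is correct and follows the same route as the paper. Both reduce balancedness to $\operatorname{tr}\operatorname{ad}_X=0$ for all $X\in\mathfrak g$, and in the perfect case both use $\operatorname{tr}\operatorname{ad}_{[X,Y]}=\operatorname{tr}[\operatorname{ad}_X,\operatorname{ad}_Y]=0$. You also derive that criterion through $\eta_i=\operatorname{tr}\operatorname{ad}_{e_i}$ and handle the nilpotent case explicitly; the paper leaves both implicit, and for the latter the inclusion $\operatorname{ad}_X(\mathfrak g_k)\subset\mathfrak g_{k+1}$ suffices without Engel's theorem.
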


To see that Proposition \ref{Lie_balanced} holds, we recall the Jacobi identity: $\operatorname{ad}_{[X,Y]}=[\operatorname{ad}_X, \operatorname{ad}_Y]$, $\forall X, Y \in \mathfrak{g}$. Taking the trace on both sides, we have
$\operatorname{tr} ad_{[X,Y]} = \operatorname{tr} ([\operatorname{ad}_X, \operatorname{ad}_Y])=0$. Then if $\mathfrak{g}$ is perfect ($\mathfrak{g}=[\mathfrak{g},\mathfrak{g}]$) we have $\operatorname{tr} ad_X =0$ for all $X\in \mathfrak{g}$. In particular, Proposition \ref{Lie_balanced} holds for any semisimple Lie group.

As a complex manifold, we have $\mathcal{O}(G)=\mathcal{O}\{z_1, z_2, z_3, z_4\}/\{z_1z_4-z_2z_3=1\}$. As a result of Theorem \ref{poly_space_char_intro}, there is no nontrivial holomorphic function of polynomial growth on $G$. Can we classify all complete Chern-flat Hermitian metrics on $\operatorname{SL}(2, \mathbb{C})$? To the best of our knowledge, Question \ref{ques3_intro} remains unanswered. It is interesting to note that $\operatorname{SL}(2, \mathbb{C})$ admits a complete asymptotically conical Calabi-Yau K\"ahler metric by \cite{Stenzel1993}.

\subsection{A non-K\"ahler quotient space obtained from $\operatorname{SL}(2, \mathbb{C})$}\label{quotientSL}

Usually we prefer the right coset space as it is compatible with the left-invariant geometry. However, in the following Proposition \ref{basicfunc} and Theorem \ref{non_kahler}, we use the left coset space for the setting of principal bundles.

\begin{proposition}\label{basicfunc}
Let $\Gamma_{\mathbb{Z}}=\begin{pmatrix}
  1 &  \mathbb{Z} \\
  0 &  1
\end{pmatrix}$ and $\Gamma_{\mathbb{C}}=\begin{pmatrix}
  1 &  \mathbb{C} \\
  0 &  1
\end{pmatrix}$. 
Then $\operatorname{SL}(2, \mathbb{C})/\Gamma_{\mathbb{Z}}$ is not holomorphically convex.  
\end{proposition}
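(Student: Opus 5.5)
The plan is to compute $\mathcal{O}(X)$ for $X=\operatorname{SL}(2,\mathbb{C})/\Gamma_{\mathbb{Z}}$ (left cosets $g\Gamma_{\mathbb{Z}}$) and show that every holomorphic function on $X$ is constant on noncompact curves. Then the holomorphic hull of a single point is noncompact, which contradicts holomorphic convexity.

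\textbf{Fibration.} Write $\gamma_b=\begin{pmatrix}1&b\\0&1\end{pmatrix}$. Right multiplication by $\gamma_b$ sends $(z_1,z_2,z_3,z_4)$ to $(z_1,z_2+bz_1,z_3,z_4+bz_3)$. Hence the first column $\pi(g)=(z_1,z_3)$ identifies $\operatorname{SL}(2,\mathbb{C})/\Gamma_{\mathbb{C}}$ with $\mathbb{C}^2\setminus\{0\}$. This makes $X\to\mathbb{C}^2\setminus\{0\}$ a holomorphic principal bundle with fiber $\Gamma_{\mathbb{C}}/\Gamma_{\mathbb{Z}}\cong\mathbb{C}/\mathbb{Z}\cong\mathbb{C}^*$.

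\textbf{Fourier decomposition.} Given $f\in\mathcal{O}(X)$, viewed as a right-$\Gamma_{\mathbb{Z}}$-invariant function on $\operatorname{SL}(2,\mathbb{C})$, define
\[
f_n(g)=\int_0^1 f(g\gamma_b)e^{-2\pi\sqrt{-1}nb}\,db .
\]
Each $f_n$ is holomorphic, satisfies $f_n(g\gamma_b)=e^{2\pi\sqrt{-1}nb}f_n(g)$ for all $b\in\mathbb{C}$ by analytic continuation in $b$, and $f=\sum_n f_n$ locally uniformly.

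We trivialize the bundle over $\{z_1\neq0\}$ by the fiber coordinate $z_2/z_1$ and over $\{z_3\neq0\}$ by $z_4/z_3$. Since $z_1z_4-z_2z_3=1$, the two coordinates differ by
\[
\frac{z_4}{z_3}-\frac{z_2}{z_1}=\frac{1}{z_1z_3}.
\]
So we can write $f_n=e^{2\pi\sqrt{-1}nz_2/z_1}a_n(z_1,z_3)$ on $\{z_1\neq0\}$, with $a_n$ holomorphic on $\mathbb{C}^*\times\mathbb{C}$. Similarly $f_n=e^{2\pi\sqrt{-1}nz_4/z_3}b_n(z_1,z_3)$ on $\{z_3\neq0\}$, with $b_n$ holomorphic on $\mathbb{C}\times\mathbb{C}^*$. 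On $\{z_1z_3\neq0\}$ the two descriptions give
\[
a_n=e^{2\pi\sqrt{-1}n/(z_1z_3)}\,b_n .
\]

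\textbf{Main step: $f_n\equiv0$ for $n\neq0$.} I expect this to be the main obstacle. I would decompose under the $\mathbb{C}^*$-action $(z_1,z_3)\mapsto(\lambda z_1,\lambda^{-1}z_3)$, which preserves $u=z_1z_3$ and commutes with the transition factor. Concretely, expand
\[
a_n=\sum_{j\in\mathbb{Z},\,k\geq0}\alpha_{jk}z_1^jz_3^k,\qquad b_n=\sum_{j\geq0,\,k\in\mathbb{Z}}\beta_{jk}z_1^jz_3^k .
\]
The terms of weight $m=j-k$ can be grouped as $z_1^{m}\phi_m(u)$ for $a_n$ and $z_1^{m}\psi_m(u)$ for $b_n$. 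Compare weight by weight in the identity $a_n=e^{2\pi\sqrt{-1}n/u}\,b_n$, which is legitimate because the exponential factor has weight $0$. For each $m$, one of $\phi_m,\psi_m$ is holomorphic at $u=0$ and the other is a holomorphic function of $1/u$ up to a bounded power, depending on the sign of $m$. Thus for each $m$ we get a function on $\mathbb{C}^*$ equal to $e^{2\pi\sqrt{-1}n/u}$ times another such function, with at most a pole at $0$ on both sides. An essential singularity $e^{c/u}$ with $c\neq0$ cannot be matched this way, so $\phi_m=\psi_m=0$. Hence $a_n=b_n=0$ and $f_n\equiv0$ for $n\neq0$.

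\textbf{Conclusion.} Therefore $f=f_0=\pi^*F$ for some $F\in\mathcal{O}(\mathbb{C}^2\setminus\{0\})=\mathcal{O}(\mathbb{C}^2)$, by Hartogs. In particular every holomorphic function on $X$ is constant on each fiber $\pi^{-1}(z_1,z_3)\cong\mathbb{C}^*$. Take $K$ to be a single point of $X$. Its holomorphic hull $\{x:|f(x)|\le\sup_K|f|\ \forall f\in\mathcal{O}(X)\}$ contains the entire fiber through that point, which is a closed noncompact subset of $X$. So the hull is not compact, and $X$ is not holomorphically convex.
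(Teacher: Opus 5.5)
Your proof is correct and follows the same route as the paper. Every holomorphic function on $\operatorname{SL}(2,\mathbb{C})/\Gamma_{\mathbb{Z}}$ depends only on the first column $(a,c)$, so it is constant on the noncompact fibers $\Gamma_{\mathbb{C}}/\Gamma_{\mathbb{Z}}\cong\mathbb{C}^*$; the paper phrases the conclusion via the divergent sequence $p_n$, and you phrase it via the hull of a point. The one real difference is that the paper only asserts that the right-$\Gamma_{\mathbb{Z}}$-invariant functions form $\mathbb{C}\{a,c\}$, while your Fourier-mode and weight-decomposition argument actually proves it.

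One phrase in that step should be corrected, although the argument itself is unaffected. For every $m$, $\phi_m$ is entire, and $\psi_m(u)=u^{-m}\chi_m(u)$ with $\chi_m$ entire; it is not a holomorphic function of $1/u$. So both sides of $\phi_m=e^{2\pi\sqrt{-1}n/u}\psi_m$ are meromorphic at $u=0$, which is exactly what your essential-singularity argument needs.
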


\begin{proof}[Proof of Proposition \ref{basicfunc}]
Let $\begin{pmatrix}
    a &  b  \\  c  &  d  
\end{pmatrix} \in \operatorname{SL}(2, \mathbb{C})$. Then the ring of right $\Gamma_{\mathbb{Z}}$-invariant holomorphic functions on $\operatorname{SL}(2, \mathbb{C})$
is $\mathbb{C}\{a, c\}$. Obviously, there are no holomorphic functions which can diverge to $\infty$ along a sequence $\{p_n\}_{n=1}^{\infty}$ with $p_n=\begin{pmatrix}
    1 &  n\,i  \\  0  &  1  
\end{pmatrix}$.
\end{proof}

Now we review an interesting example where the total space of a principal bundle over K\"ahler manifold is not K\"ahler. This example is due to Berteloot-Oeljeklaus {\cite[Theorem 3.1]{BO1988}} and Greb-Miebach {\cite[Example 5.1]{GM2023}}. We consider the right action of $\Gamma_{\mathbb{C}}$ on $\operatorname{SL}(2, \mathbb{C})$. Note that the projection to the first column defined by $\begin{pmatrix}
    a &  b  \\  c  &  d  
\end{pmatrix}  \rightarrow   (a, c)
$ is a biholomorphic map between $\operatorname{SL}(2, \mathbb{C})/\Gamma_{\mathbb{C}}$ and $\mathbb{C}^2 \setminus \{0\}$.
We may check that $\operatorname{SL}(2, \mathbb{C})/ \Gamma_{\mathbb{Z}} \rightarrow  \operatorname{SL}(2, \mathbb{C})/ \Gamma_{\mathbb{C}}$ defines a $\mathbb{C}^{\ast}$-principal bundle over $\mathbb{C}^2 \setminus \{0\}$.

\begin{theorem}[{\cite[Theorem 3.1]{BO1988}}]\label{non_kahler}
$\operatorname{SL}(2, \mathbb{C})/\Gamma_{\mathbb{Z}}$ admits no K\"ahler metrics.
\end{theorem}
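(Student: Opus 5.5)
Let $X=\operatorname{SL}(2,\mathbb{C})/\Gamma_{\mathbb{Z}}$ and argue by contradiction: suppose $X$ carries a K\"ahler form $\omega$. The goal is to produce a compact complex curve in $X$ on which a positive-degree line bundle is forced to be trivial. Use the $\mathbb{C}^{\ast}$-principal bundle $\pi: X\to \mathbb{C}^2\setminus\{0\}$, $(a,b;c,d)\mapsto (a,c)$, with fiber $\Gamma_{\mathbb{C}}/\Gamma_{\mathbb{Z}}\cong\mathbb{C}/\mathbb{Z}\cong\mathbb{C}^{\ast}$. Compose with the Hopf map $\mathbb{C}^2\setminus\{0\}\to\mathbb{P}^1$: the fiber over $[a:c]$ is a punctured line $\{\lambda(a,c):\lambda\neq0\}$. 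Its preimage in $X$ is the subgroup-orbit structure of the Borel subgroup, namely $X\to \operatorname{SL}(2,\mathbb{C})/B=\mathbb{P}^1$ has fiber $B/\Gamma_{\mathbb{Z}}$, where $B$ is the upper triangular Borel subgroup. Now $B/\Gamma_{\mathbb{Z}}$ is a $\mathbb{C}^{\ast}$-bundle over $B/\Gamma_{\mathbb{C}}\cong\mathbb{C}^{\ast}$ (the diagonal entry $a$), and the conjugation action $a\cdot n\cdot a^{-1}$ rescales the unipotent entry by $a^2$, which does not preserve $\mathbb{Z}$. This already indicates a twisting that is incompatible with a closed positive form, but the cleanest route, following Berteloot--Oeljeklaus, is via averaging.

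The key steps, in order, are as follows. First, note that the left action of the compact group $K=\operatorname{SU}(2)$ on $X$ is holomorphic, so averaging $\omega$ over $K$ with Haar measure gives a $K$-invariant K\"ahler form $\omega_K$. Second, use the one-parameter subgroup in the fiber direction: the right action of $\Gamma_{\mathbb{C}}/\Gamma_{\mathbb{Z}}$ contains a compact circle $S^1=\mathbb{R}/\mathbb{Z}$, acting holomorphically and commuting with the left $K$-action. Averaging over it as well yields a K\"ahler form $\omega_0$ invariant under $K\times S^1$. Third, $K\times S^1$ acts on $X$ with real-codimension-one orbits: $\dim_{\mathbb{R}}X=6$ and $K$ acts freely on $\operatorname{SL}(2,\mathbb{C})$. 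Hence $\omega_0$ is determined by finitely many functions of one real variable, namely the orbit parameter, which is essentially $t=\operatorname{Im}$ of the fiber coordinate $b$ modulo a $K$-normalization, or equivalently $|a|^2+|c|^2$ combined with the fiber coordinate.

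The final step is a contradiction from closedness. The infinitesimal generators of $\mathfrak{k}\oplus\mathbb{R}$ together with $J$ applied to them span $TX$. Express $d\omega_0=0$ as ODE constraints on those coefficient functions, and combine them with the Lie bracket relations of $\mathfrak{sl}(2,\mathbb{C})$. These show that $\omega_0$ evaluated on $(\xi,J\xi)$ for $\xi$ the generator of the circle must be zero, or must change sign along the orbit parameter. That contradicts positivity. An alternative route to the same contradiction is to find a compact torus $T\subset X$, the $S^1\times S^1$ orbit of the maximal torus of $K$ times the fiber circle, which is totally real. One then uses that the $S^1$ fiber is null-homologous in $X$, since $H_1$ of the fiber $\mathbb{C}^{\ast}$ maps to zero via the Euler class of the bundle restricted to a $\mathbb{P}^1$-section. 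From this one produces a closed $2$-chain whose $\omega$-integral is positive yet equals zero by Stokes. The hardest part is exactly this last step: pinning down which $K\times S^1$-invariant $2$-cycle or which coefficient identity carries the sign contradiction. I expect the first attempt to be computing $d\omega_0$ on left-invariant frames $\{e_1,e_2,e_3\}$ restricted to the invariant data.
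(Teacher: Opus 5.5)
Your averaging steps are fine: the left $K=\operatorname{SU}(2)$-action and the right action of $\Gamma_{\mathbb{R}}/\Gamma_{\mathbb{Z}}\cong S^1$ (with $\Gamma_{\mathbb{R}}\subset\Gamma_{\mathbb{C}}$ the real translations) are holomorphic and commute, and averaging preserves the K\"ahler property. The gap starts with the reduction. $K\times S^1$ has real dimension $4$ and $\dim_{\mathbb{R}}X=6$, so the orbits have codimension at least $2$, not $1$. Concretely, $|a|^2+|c|^2$ and $\operatorname{Im}(\bar a b+\bar c d)$ are both entries (or parts of entries) of $g^{*}g$, both are $K\times S^1$-invariant, and they are functionally independent. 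So there is no ODE reduction. You also never produce the sign contradiction, and that is the entire content of the theorem.

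Averaging cannot supply the contradiction by itself. It only yields invariance under $\Gamma_{\mathbb{R}}$. Along each fiber $w\mapsto g u_w$, where $u_w\in\Gamma_{\mathbb{C}}$ has upper-right entry $w$, a strictly subharmonic function of $\operatorname{Im} w$ such as $(\operatorname{Im} w)^2$ is compatible with that invariance. The missing idea is the one the paper sketches, following Berteloot--Oeljeklaus:
\begin{enumerate}
\item Pass to $\operatorname{SL}(2,\mathbb{C})$, which is Stein with $H^1=H^2=0$, and write the pulled-back form as $\sqrt{-1}\partial\bar\partial\Psi$.
\item Modify $\Psi$ to a right $\Gamma_{\mathbb{Z}}$-invariant strictly PSH potential.
\item Invoke Berteloot's theorem: right $\Gamma_{\mathbb{Z}}$-invariant PSH functions on $\operatorname{SL}(2,\mathbb{C})$ are right $\Gamma_{\mathbb{C}}$-invariant.
\item Conclude that the potential is constant on the entire curves $g\Gamma_{\mathbb{C}}$, contradicting strict plurisubharmonicity.
\end{enumerate}
Propagating invariance from $\Gamma_{\mathbb{Z}}$ (or $\Gamma_{\mathbb{R}}$) to the full complex group $\Gamma_{\mathbb{C}}$ is the nontrivial analytic step your plan lacks.

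Your alternative route and your opening idea both fail for topological reasons.
\begin{itemize}
\item The fiber circle is not null-homologous. Since $\operatorname{SL}(2,\mathbb{C})$ is simply connected, $\pi_1(X)\cong\Gamma_{\mathbb{Z}}\cong\mathbb{Z}$. The loop $t\mapsto g u_t\Gamma_{\mathbb{Z}}$, $t\in[0,1]$, lifts to a path from $g$ to $gu_1$, so it represents a generator of $H_1(X;\mathbb{Z})\cong\mathbb{Z}$.
\item No Euler-class argument is available. The bundle $X\to\mathbb{C}^2\setminus\{0\}$ lies over a base homotopy equivalent to $S^3$, so it is topologically trivial and $X\simeq S^3\times S^1$.
\item No Stokes argument on a compact cycle can work. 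Since $H^2(X;\mathbb{R})=0$, any K\"ahler form on $X$ is exact and integrates to zero over every closed $2$-cycle, whether a totally real torus or anything else.
\item There is no compact curve to use. A compact complex curve in $X$ would map to finitely many points under the holomorphic map $X\to\mathbb{C}^2\setminus\{0\}\subset\mathbb{C}^2$, hence lie in a fiber $\mathbb{C}^{\ast}$, which contains none. So the plan of finding a compact curve carrying a positive-degree line bundle cannot work.
\end{itemize}
The obstruction is genuinely non-compact and has to come from an analytic statement like Berteloot's.
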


In fact, a more general result for complex semisimple Lie groups was proved in \cite[Theorem 3.1]{BO1988}. The proof relies on a related result due to Huckleberry; see \cite[Theorem 3.2]{BO1988}. We briefly recall the main idea of the argument as follows. If $\operatorname{SL}(2, \mathbb{C})/\Gamma_{\mathbb{Z}}$ admits a K\"ahler form $\omega$, then $\operatorname{SL}(2, \mathbb{C})$ admits a right $\Gamma_{\mathbb{Z}}$-invariant K\"ahler form $\omega$. Then we could find a smooth strictly PSH function $\Psi$ on $\operatorname{SL}(2, \mathbb{C})$ such that $\omega=\sqrt{-1}\partial\overline{\partial} \Psi$. We proceed to show that a suitable modification of $\Psi$, denoted by $\widetilde{\Psi}$, also defines a smooth K\"ahler metric on $\operatorname{SL}(2, \mathbb{C})$. Moreover, $\widetilde{\Psi}$ is right $\Gamma_{\mathbb{Z}}$-invariant. According to a result of Berteloot \cite{Bert1987}, it is also right $\Gamma_{\mathbb{C}}$-invariant. However, $\Gamma_{\mathbb{C}}$ defines an entire curve in $\operatorname{SL}(2, \mathbb{C})$ along which $\sqrt{-1}\partial\overline{\partial} \widetilde{\Psi}$ vanishes. We refer the reader to \cite{BO1988} and \cite{Bert1987} for the detailed proof.

In view of Theorem \ref{non_kahler}, we conclude the paper with the following remark.

\begin{remark}\label{quotient_final}
Consider $M=\Gamma_{\mathbb Z}\backslash \operatorname{SL}(2,\mathbb C)$ equipped with the natural Chern-flat Hermitian metric descending from \eqref{SL2C_def} and \eqref{ss_metric}. We observe that $\mathcal{O}(M)=\mathcal{O}\{z_3,z_4\}$. Moreover, by Theorem \ref{poly_space_char_intro}, there are no nonconstant holomorphic functions of polynomial growth. Does $M$ admit another complete Hermitian metric which is better adapted to its function theory? 
\end{remark}

\bibliographystyle{amsplain}

\bibliography{neg_related}

\end{document}